\documentclass[12pt]{amsart}

\usepackage{graphicx,indentfirst}
\usepackage{amsmath,amssymb,mathrsfs}
\usepackage{amsthm,amscd}
\usepackage{verbatim}
\usepackage{appendix}
\usepackage{enumitem,titletoc}
\usepackage{appendix}
\usepackage{imakeidx}
\makeindex[columns=2, title=Alphabetical Index]
\usepackage{fancyhdr}
\usepackage{amsfonts,color}
\usepackage[all]{xy}
\usepackage{tikz-cd}
\usepackage{syntonly}
\usepackage{fancyhdr}
\linespread{1.05}
\newtheorem{theorem}{Theorem}[section]
\newtheorem{lemma}{Lemma}[section]
\newtheorem{definition}{Definition}[section]
\newtheorem{remark}{Remark}[section]
\usepackage[left=2.55cm, right=2.55cm, bottom=3cm, top=3cm]{geometry}

\newcommand{\abs}[1]{|#1|^2}

\usepackage{tikz}
\usepackage{extarrows}
\usepackage{hyperref}


%

\def\XXint#1#2#3{{\setbox0=\hbox{$#1{#2#3}{\int}$ }
\vcenter{\hbox{$#2#3$ }}\kern-.6\wd0}}

\newtheorem{prop}{Proposition}[section]
\newtheorem{defn}{Definition}[section]
\newtheorem{corollary}{Corollary}[section]

\allowdisplaybreaks

\newcommand{\ddbar}{i\partial\bar\partial}
\newcommand{\ric}{\mathrm{Ric}}
\newcommand{\scR}{\mathrm{R}}
\newcommand{\innpro}[1]{\langle#1\rangle}
\newcommand{\bk}[1]{\Big(#1\Big)}

\newcommand{\xk}[1]{\big(#1\big)}

\newcommand{\cS}{\mathcal{S}}
\newcommand{\cR}{\mathcal{R}}

\makeindex

\DeclareMathOperator{\vol}{Vol}

\DeclareMathOperator{\tr}{tr}

\numberwithin{equation}{section}

\hypersetup{
    colorlinks=true,
    citecolor=green,
    filecolor=green,
    linkcolor=blue,
    urlcolor=black
}

\begin{document}

\address{Department of Mathematics \&~ Computer Science, Rutgers University, Newark, NJ 07102}

\email{bguo@rutgers.edu}

\address{Department of Mathematics, Rutgers University, Piscataway, NJ 08854}
\email{jiansong@math.rutgers.edu}

\title[Regularization of singular K\"ahler metrics]{Nash entropy, Calabi energy and geometric regularization of singular K\"ahler metrics}

\author{Bin Guo \and Jian Song}\date{}
\thanks{Work supported in part by the National Science Foundation under grants DMS-2203607 and DMS-2303508,  and the grant MPS-TSM-00946730 from Simons Foundation.}

\begin{abstract}  We prove uniform Sobolev bounds for solutions of the Laplace equation on a  general family of K\"ahler manifolds with bounded Nash entropy and Calabi energy.  These estimates establish a connection to the theory of RCD spaces and provide abundant examples of RCD spaces  topologically and holomorphically equivalent to projective varieties. Suppose $X$ is a normal projective variety that admits a resolution of singularities with relative nef or relative effective anti-canonical bundle. Then every admissible singular K\"ahler metric on $X$ with Ricci curvature bounded below induces a non-collapsed RCD space  homeomorphic to the projective variety $X$ itself.

\end{abstract}

\maketitle

\section{Introduction}

In the recent papers \cite{GPSS1, GPSS2} and their extensions \cite{GPSS3, Vu, GT}, various analytic and geometric estimates are established to build a general theory of geometric analysis on singular complex spaces. These estimates are built on the assumption on uniform bounds of the Nash entropy. In this paper, we aim to obtain new Sobolev estimates by imposing an additional assumption on the Calabi energy.  Such an assumption is satisfied in many geometric and analytic settings including the cases of constant scalar curvature K\"ahler metrics and K\"ahler metrics of bounded $L^2$-curvature. Our new estimates also reveal connections between singular K\"ahler spaces and the theory of RCD spaces.

Let $X$ be an $n$-dimensional compact normal K\"ahler variety equppied with a smooth K\"ahler metric  $\theta_X$ (c.f. Definition \ref{smm}). 
Similar to the singular K\"ahler metrics introduced in \cite{GPSS2}, we consider the set 
\begin{equation}\label{vspace}
\mathcal{V} (X, \theta_X, n, A,  p, K)
\end{equation}
 of all singular K\"ahler metrics $\omega$ satisfying the following properties. 
\begin{enumerate}

\item $\omega\in C^2(X^\circ)$, where $X^\circ=\cR(X)$ is the smooth part of $X$.

\medskip

\item $[\omega]$ is a K\"ahler class on $X$ with  
\begin{equation}\label{cls0}
I_\omega = [\omega]\cdot [\theta_X]^{n-1} \leq A.
\end{equation}
 
\medskip

\item  $p>n$ and 
\begin{equation}\label{nashen0}
\mathcal{N}_{\theta_X, p}(\omega) = \frac{1}{V_\omega}\int_X \left| \log\left( V_\omega^{-1} \frac{\omega^n}{\theta_X^n} \right) \right|^p\omega^n\leq K.
\end{equation}
where $V_\omega = [\omega]^n$ is the volume of $(X, \omega)$.

\medskip

\end{enumerate}
The singular K\"ahler metric $\omega$ is a closed positive $(1,1)$-current on $X$ with bounded local potentials due to the entropy bound (\ref{nashen0}) (c.f. \cite{Ko1, EGZ, Zz, GPT}). 
As defined in \cite{GPSS2}, we let 
 \begin{equation}\label{mmsp}
 (\hat X, d_\omega, \omega^n)
 \end{equation}
  be the metric completion of $(X^\circ, \omega|_{X^\circ}, \omega^n|_{X^\circ})$. The metric measure space $(\hat X, d_\omega, \omega^n)$ is extensively studied in \cite{GPSS2} both analytically and geometrically via complex Monge-Amp\`ere equations and its coupled Laplace equation.  The spectral theory is established in \cite{GPSS2} for $W^{1,2}(\hat X)$ along with many other uniform estimates for diameters, Sobolev inequalities, Green's functions and heat kernels. The series of papers \cite{FGS, S2, GPT, GPSS1, GPSS2, GPSS3} aim to build a framework that would expand the classical works \cite{Y1, Ko1} on complex Monge-Amp\`ere equations to geometric analysis on complex spaces with singularities. In particular, one wishes to establish connections between analysis on singular K\"ahler spaces and their topological, geometric and algebraic structures, which might require additional geometric or analytic assumptions beyond the Nash entropy. For example, one does not expect the Sobolev inequality with optimal exponents to hold for K\"ahler metrics in $\mathcal{V} (X, \theta_X, n, A,  p, K)$.

 A natural geometric assumption for singular K\"ahler metrics will be suitable bounds on the Ricci curvature. The Ricci curvature for $\omega \in \mathcal{V} (X, \theta_X, n, A,  p, K)$ can be defined as a current if the volume measure $\omega^n$ satisfies suitable quasi-plurisubharmonic conditions (c.f. Definition \ref{riclbdef}). When the Ricci curvature $\ric(\omega)$ is bounded below as a current, one would like to understand the geometric and analytic structures of $(\hat X, d_\omega, \omega^n)$. In particular, one would expect the notion of Ricci curvature bounded below in terms of the pluripotential theory should be equivalent to various synthetic Ricci curvature lower bounds in the study of RCD spaces developed by \cite{LV, Stk, AGS} and many others. The following specific questions arise naturally if the Ricci curvature of the singular K\"ahler space $(X, \omega)$ is bounded below.

  \begin{enumerate}
 
 \item Is $(\hat X, d_\omega)$ defined as in (\ref{mmsp}) homeomorphic to the original singular space $X$? 
 
 \smallskip
 
 \item Does the Sobolev inequality with optimal exponent hold on $(\hat X, d_\omega, \omega^n)$?

 \end{enumerate}
 
 It is obvious that these questions are related to the theory of RCD spaces due to recent progress in the compactness of K\"ahler manifolds with suitable Ricci curvature bounds, particularly in the case when $(X, \omega)$ admits suitable smoothing \cite{DS1, LS} or special resolution of singularities \cite{S2}. Recently , Question (1) and (2)  are confirmed in the following cases for a projective variety $X$ with log terminal singularities. 

\begin{enumerate}

\item   $(X, \omega)$ is K\"ahler-Einstein and $X$ admits a resolution $\pi: Y \rightarrow X$ with $\pi$-nef anti-canonical divisor $-K_Y$  \cite{Sz24},  

 \smallskip

\item $\dim_{\mathbb{C}} X =3$ \cite{FGS2}, 

 \smallskip

\item $(X, \omega)$ has Ricci curvature bounded below and $X$ admits a resolution $\pi: Y \rightarrow X$ with $\pi$-effective anti-canonical divisor $-K_Y$ \cite{FGS2}. 
 
\end{enumerate}

We will first establish new Sobolev estimates for   $\omega \in \mathcal{V} (X, \theta_X, n, A,  p, K)$ on a smooth K\"ahler manifold $X$ with bounded Calabi energy. These estimates will help us attack Question (1) and (2) and generalize the recent works of \cite{Sz24, FGS2}.  
 Recall that the Calabi energy for a K\"ahler metric $\omega$ on $X$ is defined by
 \begin{equation}\label{cala0}
 \mathcal{C}a(\omega)= \frac{1}{V_\omega} \int_X \left( \scR (\omega) \right) ^2 \omega^n, 
 \end{equation}
where $\scR(\omega)$ is the scalar curvature of $\omega$.  The following is our first main result.

\begin{theorem} \label{thm:main1} Let $(X, \theta_X)$ be an $n$-dimensional compact K\"ahler manifold equipped with a smooth K\"ahler metric $\theta_X$. Suppose $\omega$  is a K\"ahler metric in $\mathcal{V}(X, \theta_X, n, A,  p, K)$ with $p>n$ satisfying
$$\mathcal{C}a(\omega)   \leq B$$
and  $u\in C^\infty(X)$ is a solution of the Laplace equation
\begin{equation}\label{lap0}
\Delta_\omega u = f, ~ \int_X u\omega^n = 0
\end{equation}
for $f\in C^\infty(X)$.  Then the following hold. 

\begin{enumerate} 

\item There exists $C=C\left(X, \theta_X, n, A, B, p, K, \|f\|_{L^\infty(X)} \right)>0$ such that 
$$\|u\|_{L^\infty(X)} \leq C,$$
$$\frac{1}{V_\omega} \int_X \left( \abs{\nabla \nabla u}_\omega + \abs{\nabla \overline{\nabla} u}_\omega  +   |\nabla u|_\omega^4\right) \omega^n  \le C.$$

\medskip

\item For any $r>0$, there exists $C=C\left(X, \theta_X, n, A, B, p, K, \|f\|_{L^\infty(X)}, r \right)>0$ such that for any $x\in X$, 
$$\frac{1}{V_\omega} \int_{X\setminus B_\omega (x, r)} \left( V_\omega^2 \abs{\nabla \nabla G(x,\cdot)}_\omega + V_\omega^2\abs{\nabla \overline{\nabla} G(x,\cdot)}_\omega + V_\omega^4 |\nabla G(x,\cdot)|^4_\omega  \right) \omega^n \le C, $$
where  $B_\omega(x, r)$ is the $\omega$-geodesic ball centered at $x$ with radius $r$. 

\end{enumerate}

\end{theorem}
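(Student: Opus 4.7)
The plan is to combine the $L^\infty$ estimates of \cite{GPSS1, GPSS2} with a Bochner-Weitzenb\"ock argument on the K\"ahler manifold $(X,\omega)$, using the Calabi energy bound $B$ to control the Ricci-dependent term via integration by parts against the log density $\sigma := \log\bigl(V_\omega^{-1}\omega^n/\theta_X^n\bigr)$. The $L^\infty$ bound $\|u\|_{L^\infty(X)} \leq C$ follows directly from the ABP/Moser iteration scheme developed in \cite{GPSS1, GPSS2}, which requires only the Nash entropy bound and $\|f\|_{L^\infty}$. Integrating the K\"ahler Bochner identity over $X$ and using $\int\langle\nabla u,\nabla f\rangle_\omega\omega^n = -\int f^2\omega^n$ yields
\[
\int_X\bigl(|\nabla\nabla u|^2 + |\nabla\bar\nabla u|^2\bigr)\omega^n = \int_X f^2\omega^n - 2\int_X\mathrm{Ric}(\omega)(\nabla u,\bar\nabla u)\,\omega^n,
\]
so the whole problem reduces to bounding the Ricci term.

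For the Ricci term, write $\mathrm{Ric}(\omega) = \mathrm{Ric}(\theta_X) - i\partial\bar\partial\sigma$. The background piece $\int\mathrm{Ric}(\theta_X)(\nabla u,\bar\nabla u)\omega^n$ is pointwise bounded by $C\,\mathrm{tr}_\omega\theta_X\cdot|\nabla u|^2_\omega$, and since $\int\mathrm{tr}_\omega\theta_X\,\omega^n = n[\theta_X]\cdot[\omega]^{n-1} \leq nA$, H\"older bounds its total integral by $CV_\omega^{1/2}\bigl(\int|\nabla u|^4\omega^n\bigr)^{1/2}$. For the main term, two integrations by parts produce the identity
\[
-\int\sigma_{i\bar j}u^i u^{\bar j}\omega^n = \int f\,\langle\partial\sigma,\partial u\rangle_\omega\,\omega^n - \int(\Delta_\omega\sigma)\,|\partial u|^2_\omega\,\omega^n - \int \sigma^{\bar k}u^p u_{p\bar k}\,\omega^n.
\]
Substituting $\Delta_\omega\sigma = \mathrm{tr}_\omega\mathrm{Ric}(\theta_X) - R(\omega)$ replaces the second integral by a background contribution plus $\int R(\omega)|\partial u|^2\omega^n$, which is bounded by $(BV_\omega)^{1/2}\bigl(\int|\nabla u|^4\omega^n\bigr)^{1/2}$ via the Calabi energy. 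An auxiliary bound $\int|\partial\sigma|^2_\omega\,\omega^n \leq C$ follows from the identity $-\int\sigma\Delta_\omega\sigma\,\omega^n = \int|\partial\sigma|^2_\omega\omega^n$ combined with Nash entropy ($\|\sigma\|_{L^p(\omega^n)} \leq C$) and $\|R\|_{L^2} \leq (BV_\omega)^{1/2}$. The remaining terms in the displayed identity are then bounded via H\"older in terms of $\|\sigma\|_{L^p}$, $\|\partial\sigma\|_{L^2}$, $\|R\|_{L^2}$, $\|u\|_\infty$, $\|f\|_\infty$, together with the self-referential quantities $\int|\nabla u|^4$ and $\int|\nabla\bar\nabla u|^2$, which are absorbed back into the integrated Bochner identity.

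The $L^4$-gradient bound closes the bootstrap via the IBP inequality
\[
\int|\nabla u|^4\omega^n = -\int u\bigl(\langle\nabla u,\nabla|\nabla u|^2\rangle + |\nabla u|^2 f\bigr)\omega^n \leq \varepsilon\int|\nabla u|^4\omega^n + C_\varepsilon\|u\|_\infty^2\Bigl(\int|\nabla^2 u|^2\omega^n + \|f\|_\infty^2 V_\omega\Bigr),
\]
combined with $\int|\nabla u|^2\omega^n = -\int u f\,\omega^n \leq \|u\|_\infty\|f\|_\infty V_\omega$ and the Hessian bound from the previous step. Part (2) is obtained by applying part (1) to $v := \chi_r\bigl(V_\omega G(x,\cdot) - c_r\bigr)$, where $\chi_r$ is a smooth cutoff vanishing on $B_\omega(x,r/2)$ and identically $1$ on $X\setminus B_\omega(x,r)$ and $c_r$ is chosen so that $\int v\,\omega^n = 0$; the driving $\Delta_\omega v$ lies in $L^\infty$ because $V_\omega\Delta_\omega G(x,\cdot) = 1$ away from $x$ and because $G(x,\cdot)$ and $|\nabla G(x,\cdot)|_\omega$ admit pointwise estimates on the cutoff annulus $B_\omega(x,r)\setminus B_\omega(x,r/2)$ by \cite{GPSS2}.

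The hardest step is the Ricci-term analysis: the Calabi energy only controls $R(\omega)$ in $L^2$, which is strictly weaker than controlling the full Ricci tensor, and bridging this gap forces both the decomposition $\mathrm{Ric}(\omega) = \mathrm{Ric}(\theta_X) - i\partial\bar\partial\sigma$ and the repeated IBP that brings in the scalar curvature via $\Delta_\omega\sigma = \mathrm{tr}_\omega\mathrm{Ric}(\theta_X) - R(\omega)$; the H\"older book-keeping must then confirm that all self-referential Hessian and $|\nabla u|^4$ integrals can be absorbed and no uncontrolled norm of $f$ or of $\omega$ creeps in.
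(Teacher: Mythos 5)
Your approach diverges substantially from the paper's and contains several gaps that appear difficult to close with the stated tools.

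The paper's central observation, which you do not use, is the cohomological identity (equations (2.2)--(2.3)) expressing
\[
\frac{1}{V_\omega}\int_X |\ric(\omega)_-|^2\,\omega^n \ \le\ \frac{1}{V_\omega}\int_X |\ric(\omega)|^2\,\omega^n \ =\ \mathcal{C}a(\omega) - c_n\,[K_X]^2\cdot[\omega]^{n-2},
\]
which converts the Calabi energy bound directly into a uniform $L^2$ bound on the \emph{full} negative Ricci tensor. With this in hand the Ricci term in Bochner is handled by a single Young inequality $2\ric_-|\nabla\phi|^2 \le \tfrac{1-\beta}{2\beta}\phi^{-2}|\nabla\phi|^4 + \tfrac{2\beta}{1-\beta}\phi^2|\ric_-|^2$, where the absorbing $|\nabla\phi|^4$ term is generated ``for free'' by the power substitution $\phi = u^\beta$ (the paper takes $\beta = 8/9$ after shifting $u\ge 1$). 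The paper therefore never needs to decompose $\ric(\omega)$, and never meets any background-curvature integral.

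In your decomposition $\ric(\omega) = \ric(\theta_X) - i\partial\bar\partial\sigma$ the background piece is the first place things break down. You claim $\int \tr_\omega\theta_X\cdot|\nabla u|^2_\omega\,\omega^n$ is bounded by $C V_\omega^{1/2}\bigl(\int|\nabla u|^4\omega^n\bigr)^{1/2}$ via H\"older, but this requires an $L^2(\omega^n)$ bound on $\tr_\omega\theta_X$, while the hypotheses only give $L^1$: $\int\tr_\omega\theta_X\,\omega^n = n[\theta_X]\cdot[\omega]^{n-1}$. Neither the Nash entropy (which controls the determinant ratio, not the trace) nor the Calabi energy supplies the needed $L^2$ bound on $\tr_\omega\theta_X$. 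The same obstruction reappears twice more: in the $\tr_\omega\ric(\theta_X)\,|\partial u|^2_\omega$ term arising after you substitute $\Delta_\omega\sigma = \tr_\omega\ric(\theta_X) - R(\omega)$, and in your auxiliary bound for $\int|\partial\sigma|^2_\omega\,\omega^n$, where pairing $\sigma\in L^p$ with $\tr_\omega\ric(\theta_X)\in L^1$ does not close. Separately, the cross term $-\int\sigma^{\bar k}u^p u_{p\bar k}\,\omega^n$ after integration by parts requires $\nabla\sigma\in L^4(\omega^n)$ to be absorbed against $\int|\nabla u|^4$ and $\int|\nabla\bar\nabla u|^2$, but you only attempt to establish $\nabla\sigma\in L^2$; the stronger $L^4$ control is not available.

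For Part (2), the paper does not reduce to Part (1): it works directly with $h := V_\omega G(x,\cdot)$, which satisfies $\Delta_\omega h = 1$ on $X\setminus B_\omega(x,r)$ and has uniform $L^\infty$ bounds there, and then runs the same $\psi = h^\beta$ Bochner argument with a cut-off, never invoking any pointwise bound on $|\nabla G|$. Your reduction $v := \chi_r(V_\omega G(x,\cdot) - c_r)$ requires $\Delta_\omega v\in L^\infty$ with a \emph{uniform} bound, which forces a uniform pointwise gradient estimate for $G$ on the cut-off annulus $B_\omega(x,r)\setminus B_\omega(x,r/2)$. Such a uniform pointwise gradient bound is not among the estimates established in \cite{GPSS1,GPSS2}; those give $L^q$ and pointwise bounds on $G$ itself and integrated gradient estimates, but not a uniform $L^\infty$ gradient bound on fixed-radius annuli. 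This is precisely the kind of a priori information that the theorem is trying to produce, so the reduction is circular.
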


The equation (\ref{lap0}) was studied in \cite{GPSS2}, where uniform $W^{1,2}$ bounds are obtained for the solution $u$ to build the spectral theory on the metric measure space $(\hat X, d_\omega, \omega^n)$. The main contribution of Theorem \ref{thm:main1} is to establish uniform $W^{2,2}$ and $W^{1,4}$ estimates for $u$. Such analytic improvements are essential for geometric applications considered in this paper.

 One can consider the space $\nu(X, \theta_X, n , A, p, K; B) = \{ \omega\in \nu(X, \theta_X, n , A, p, K): ~ \mathcal{C}a(\omega) \leq B\} $. The compactness problem of $\nu(X, \theta_X, n , A, p, K; B)$ is rather complicated even in the non-collapsing case when the volume $V_\omega$ is uniformly bounded above away from $0$. Instead, we would like to investigate a singular K\"ahler space $(X, \omega)$ that can be approximated by smooth K\"ahler metrics in $\nu(X, \theta_X, n , A, p, K; B)$ and to establish geometric and topological structures for the metric measure space $(\hat X, d_\omega, \omega^n)$.

Suppose $X$ is an $n$-dimensional compact normal K\"ahler variety. We choose $\pi: Y \rightarrow X$, a resolution of singularities for $X$, where the K\"ahler manifold $Y$ is a nonsingular   model of $X$. Let $\theta_X$ be a fixed smooth background K\"ahler metric $\theta_X$  on $X$.  We therefore introduce the following geometric regularization for singular K\"ahler metrics on the  singular  space $X$. 
  
\begin{definition} \label{calareg} 
Let  $X$ be an $n$-dimensional compact normal K\"ahler variety equipped with a smooth K\"ahler metric $\theta_X$. A singular K\"ahler metric $\omega \in  \mathcal{V}(X, \theta_X, n, A, p, K)$ with $p>n$ is said to admit a regularization with uniform Nash entropy and Calabi energy if there exists a resolution $\pi: Y \rightarrow X$ with the following properties.

\begin{enumerate}

\item There exist a sequence of smooth K\"ahler metrics $\omega_j$  and a K\"ahler class $\alpha$ on $Y$ such that
\begin{equation}\label{cls1} 
[\omega_j] = \epsilon_j \alpha + [\pi^*\omega],~\epsilon_j \rightarrow 0^+,  ~ \omega_j \rightarrow \pi^*\omega~ {\rm in}~ C^2_{loc}(\pi^{-1}(X^\circ)). 
\end{equation}


\item There exist $p'>n$ and $K'>0$ such that for all $j$, 
\begin{equation}\label{nashb1}
\mathcal{N}_{\theta_X, p'}(\omega_j) = \frac{1}{V_{\omega_j}}  \int_Y \left|  \log \left( V_{\omega_j}^{-1} \frac{\omega_j^n}{ (\pi^*\theta_X)^n} \right)\right|^{p'} \omega_j^n \leq K'. 
\end{equation}

\smallskip

\item There exists $B>0$ such that for all $j$, 
\begin{equation}\label{scb1}
\frac{1}{V_{\omega_j}}  \int_Y \left( \scR(\omega_j) \right)^2 \omega_j^n \leq B.
\end{equation}

\end{enumerate}

\end{definition}

Our next result identifies $(\hat X, d_\omega, \omega^n)$ as an RCD space if the singular K\"ahler metric admits a regularization as in Definition \ref{calareg} as well as a uniform lower bound for its Ricci curvature. 

\begin{theorem}\label{thm:main2} Let $X$ be an $n$-dimensional compact normal K\"ahler variety.   Suppose a singular K\"ahler metric $\omega  \in \mathcal{V}(X, \theta_X, n, A, p, K)$ with $p>n$   satisfies the following properties.

\begin{enumerate}

\item $(X, \omega)$ admits a regularization $(Y, \{\omega_j\}_{j=1}^\infty)$ satisfying \eqref{cls1}, \eqref{nashb1} and \eqref{scb1}, 

\smallskip

\item the Ricci curvature of $\omega$ satisfies
$$\ric(\omega) \geq - \omega $$
on $X^\circ$. 

\end{enumerate}
Then the metric measure space $(\hat X, d_\omega, \omega^n)$ (as in \eqref{mmsp}) is a  non-collapsed compact ${\rm RCD}(-1,2n)$ space. Furthermore,  there exists $c>0$ such that 
\begin{equation}
\omega \geq c \theta_X.
\end{equation}

\end{theorem}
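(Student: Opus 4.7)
The strategy is to prove Theorem~\ref{thm:main2} by passing to a measured Gromov--Hausdorff (mGH) limit of the regularizing sequence $(Y, \omega_j)$ and identifying it with $(\hat X, d_\omega, \omega^n)$. First, fixing a smooth K\"ahler metric $\theta_Y$ on $Y$ and translating the Nash entropy bound (\ref{nashb1}) from $\pi^*\theta_X$ to $\theta_Y$ via the logarithmic Jacobian of $\pi$, each $\omega_j$ lies uniformly in a class $\mathcal{V}(Y, \theta_Y, n, A', p', K')$ with Calabi energy controlled by (\ref{scb1}). I would therefore apply Theorem~\ref{thm:main1} to $(Y, \omega_j)$, together with the estimates from \cite{GPSS2}, to obtain $j$-uniform bounds on the diameter, Sobolev constant, Green's function, $L^\infty$ norm of the Monge--Amp\`ere potential of $\omega_j$, and the new $W^{2,2}$, $W^{1,4}$ estimates for the Laplace equation $\Delta_{\omega_j} u = f$.

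From these uniform estimates one extracts a subsequential mGH limit $(Z, d_Z, \mu_Z)$ of $(Y, d_{\omega_j}, \omega_j^n)$. The $C^2_{loc}$-convergence $\omega_j \to \pi^*\omega$ on $\pi^{-1}(X^\circ)$ from (\ref{cls1}) ensures that the limit inherits the smooth K\"ahler structure of $(X^\circ, \omega)$ on its regular part; a pluripotential-theoretic comparison of volume forms in the spirit of \cite{GPSS2, S2}, together with uniqueness of the metric completion, should identify $(Z, d_Z, \mu_Z)$ isometrically with $(\hat X, d_\omega, \omega^n)$.

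The main obstacle is upgrading this convergence to the RCD$(-1, 2n)$ property. The hypothesis $\ric(\omega) \geq -\omega$ on $X^\circ$ together with $C^2_{loc}$-convergence only gives $\ric(\omega_j) \geq -(1+o(1)) \omega_j$ on compact subsets of $\pi^{-1}(X^\circ)$, so the stability theorem cannot be applied naively to $\omega_j$. My plan is to combine the $L^2$ scalar curvature bound (\ref{scb1}) with the new $W^{2,2}$ estimates of Theorem~\ref{thm:main1} to verify an integrated Bochner inequality characterizing the RCD condition, up to an error vanishing as $j\to\infty$, and then invoke the mGH stability of RCD spaces from \cite{AGS}. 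Non-collapsing will follow from the uniform Sobolev inequality together with the volume bound on $V_{\omega_j}$.

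Once $(\hat X, d_\omega, \omega^n)$ is realized as a non-collapsed RCD$(-1, 2n)$ space, the structure theorem for such spaces identifies its regular set with a smooth Riemannian manifold agreeing with $(X^\circ, \omega)$, and the singular stratum has Hausdorff codimension at least two. For the final bound $\omega \geq c\theta_X$, I would combine the volume non-collapsing (yielding a lower bound on $\omega^n/\theta_X^n$), the $L^\infty$ bound on the Monge--Amp\`ere potentials, and a Laplacian estimate of Aubin--Yau type exploiting $\ric(\omega) \geq -\omega$ to bound $\tr_\omega \theta_X$ from above on $X^\circ$; together these yield $\omega \geq c\theta_X$ as an inequality of positive $(1,1)$-currents on $X$.
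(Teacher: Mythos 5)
Your proposal and the paper diverge fundamentally at the RCD step, and your route has a genuine gap. You propose to realize $(\hat X, d_\omega, \omega^n)$ as the mGH limit of $(Y, d_{\omega_j}, \omega_j^n)$ and then deduce RCD via stability of RCD spaces under mGH convergence \cite{AGS}. But mGH stability requires the approximating spaces to be RCD$(K,N)$ with \emph{uniform} parameters $K,N$; the smooth metrics $\omega_j$ on $Y$ only come with an $L^2$ bound on scalar curvature \eqref{scb1} and a Nash entropy bound \eqref{nashb1}, neither of which gives a pointwise lower Ricci bound. You correctly flag this, but the proposed fix---verify an ``integrated Bochner inequality characterizing the RCD condition, up to an error vanishing as $j\to\infty$''---is not a known or available criterion: the RCD/Bakry--\'Emery condition is not an $L^2$-type inequality with a tolerance, and there is no stability theorem for spaces that satisfy the Bochner inequality ``up to a vanishing $L^2$ error.'' In fact nothing in the hypotheses forces $\ric(\omega_j)_-$ to become small anywhere; $\scR(\omega_j)$ bounded in $L^2$ is compatible with large pointwise negative Ricci curvature on the bulk of $Y$.

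The paper avoids this entirely: it does not prove that the $(Y,\omega_j)$ converge \emph{as RCD spaces}. Instead, it verifies the RCD property intrinsically for $(\hat X, d_\omega, \omega^n)$ using Honda's criterion for almost smooth metric measure spaces (Corollary 3.10 of \cite{Ho}, see Corollary \ref{rcdsp}). The point is that the genuine Ricci lower bound $\ric(\omega)\ge -\omega$ is available on all of $X^\circ$, which is a full-measure, zero-capacity-complement set; the only missing ingredient for Honda's criterion is the Lipschitz regularity of eigenfunctions of $\Delta_\omega$, and this is exactly what Proposition \ref{thm:lip2} supplies. The regularizing family $(Y,\omega_j)$ is used not to produce an RCD approximation, but solely as a tool to prove the $W^{2,2}$/$W^{1,4}$ and Green's-function estimates of Theorem \ref{thm:main1}, which feed into the Moser/Riesz--Thorin iteration giving $\|\nabla u\|_{L^\infty}<\infty$ for eigenfunctions. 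Your outline should be reoriented along these lines: prove Lipschitz regularity of eigenfunctions using the approximation and the new Sobolev estimates, then cite Honda's criterion rather than mGH stability. Similarly, for the Schwarz lemma $\omega\ge c\theta_X$, the maximum-principle version of Aubin--Yau you sketch does not directly apply because $\omega$ is singular; the paper instead integrates the Chern--Lu inequality against cut-off functions and Green's function and exploits the $W^{1,4}$ and weighted $W^{1,2}$ bounds on $G(x,\cdot)$ to conclude an $L^\infty$ bound on $\log(\mathrm{tr}_\omega\theta_X)$ (Proposition \ref{schw1}).
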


A similar result to Theorem \ref{thm:main2} has been established in \cite{CCHSTT} for K\"ahler-Einstein currents. One can further prove that $\hat X$ is homeomorphic to the original variety $X$ using techniques in \cite{Sz24} and \cite{FGS2} assuming that $[\omega] \in H^{1,1}(X, \mathbb{Q})$ and the Ricci curvature $\ric(\omega)$ is bounded. However, it is in general challenging to construct a regularization as in Definition \ref{calareg} for $\omega \in \nu(X, \theta_X, n, A, p, K)$. Fortunately,  it can be accomplished if the anti-canonical line bundle $-K_Y$ is $\pi$-nef for the resolution $\pi: Y \rightarrow X$. This special case is first considered in \cite{Sz24} to approximate singular K\"ahler-Einstein metrics by smooth cscK metrics on the nonsingular model. Our next result generalizes the work of \cite{Sz24} on singular K\"ahler-Einstein metrics to singular K\"ahler metrics with Ricci curvature bounded below.

\begin{theorem} \label{thm:main3} Let $X$ be an $n$-dimensional compact normal K\"ahler variety with log terminal singularities. Suppose %
\begin{enumerate}

\item there exists a resolution $\pi: Y \rightarrow X$ such that   $-K_Y$ is  $\pi$-nef or $\pi$-effective, 

\smallskip

\item $\omega\in H^{1,1}(X, \mathbb{R})\cap H^2(X, \mathbb{Q})$ is a singular K\"ahler metric in $\mathcal{V}(X, \theta_X, n, A, p, K)$ with $p>n$ satisfying 
$$\ric(\omega) \geq - \omega$$ 
on $X$ globally as currents. 
\end{enumerate}
Then  the following hold. 

\begin{enumerate}

\item The metric measure space $(\hat X, d_\omega, \omega^n)$ (as in \eqref{mmsp}) is a  non-collapsed compact ${\rm RCD}(-1,2n)$ space homeomorphic to $X$, 

\smallskip

\item The regular set $\cR(\hat X)$ of $\hat X$ coincides with $X^\circ$, and the singular set $\cS(\hat X)$  has Hausdorff dimension no great than $2n-3$.

\smallskip

\item There exists $c>0$ such that $\omega\geq c\theta_X$. In particular, the identity map $\iota: (\hat X, d_\omega) \rightarrow (X, \theta_X)$ is Lipschitz.

\end{enumerate}

\end{theorem}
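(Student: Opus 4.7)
The plan is to reduce items (1) and (3) to Theorem~\ref{thm:main2} by constructing a geometric regularization of $(X,\omega)$ in the sense of Definition~\ref{calareg}, then to promote the ensuing identification of metric measure spaces to a homeomorphism with $X$ via the point-separation arguments of \cite{Sz24} and \cite{FGS2}, and finally to extract the structure of the regular and singular sets from the theory of non-collapsed $\mathrm{RCD}$ spaces.

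The main obstacle is the construction of the approximating sequence. Choose any K\"ahler class $\alpha$ on $Y$ and set $[\omega_j] = \epsilon_j\alpha + \pi^*[\omega]$ with $\epsilon_j \to 0^+$, which settles the cohomological condition \eqref{cls1}. In the $\pi$-nef case we follow the scheme of \cite{Sz24} and take $\omega_j$ to be a cscK representative of $[\omega_j]$ on $Y$; the existence and required regularity of such a representative is guaranteed by the cscK existence theory for twisted classes, using crucially that $-K_Y$ is $\pi$-nef. The cscK condition forces $\scR(\omega_j)$ to be constant, and the average scalar curvature is determined by the intersection numbers $[\omega_j]^{n-1}\cdot c_1(Y)$ and $[\omega_j]^n$, which converge to the corresponding numbers for $\pi^*[\omega]$; the Calabi bound \eqref{scb1} is thus automatic. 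The uniform Nash entropy bound \eqref{nashb1} follows from pluripotential theory \cite{Ko1, EGZ} applied to the complex Monge-Amp\`ere equation satisfied by $\omega_j$, once one observes that the $\pi$-nef condition together with $\ric(\omega)\geq -\omega$ gives uniform control on the Ricci potential of $\omega_j$. In the $\pi$-effective case $-K_Y$ is $\mathbb{Q}$-linearly equivalent to an effective divisor $E$ supported in the exceptional locus; we instead produce $\omega_j$ as solutions of Monge-Amp\`ere equations with right-hand side twisted by a small power of a defining section of $E$, in the spirit of \cite{FGS2}. Log terminality of $X$ provides the integrability needed for solvability; the construction yields $\ric(\omega_j)\geq -\omega_j - \epsilon_j\omega'$ for a fixed reference metric $\omega'$, and integrating this inequality against $\omega_j^n$ together with Chern-Weil identities supplies the scalar curvature $L^2$ bound \eqref{scb1}.

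With such a regularization in hand, Theorem~\ref{thm:main2} applies and yields the non-collapsed $\mathrm{RCD}(-1,2n)$ structure on $(\hat X, d_\omega, \omega^n)$ together with the comparison $\omega\geq c\theta_X$ of item (3), which immediately makes $\iota:(\hat X,d_\omega)\to(X,\theta_X)$ Lipschitz. To upgrade $\iota$ to a homeomorphism we invoke the rationality hypothesis $[\omega]\in H^2(X,\mathbb{Q})$: passing to a large integer multiple produces a line bundle $L$ on $X$, and sections of large powers of $\pi^*L$ on $Y$ admit uniform $L^\infty$ bounds along the sequence $\omega_j$ via H\"ormander-type $L^2$ estimates combined with the Sobolev estimates of Theorem~\ref{thm:main1}. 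Following the arguments of \cite{Sz24, FGS2}, these sections descend to $\hat X$ and separate its points, producing a continuous inverse of $\iota$ and establishing item (1). For item (2), the inclusion $X^\circ\subset \cR(\hat X)$ is immediate from the smoothness of $\omega$ and its pointwise Ricci lower bound on $X^\circ$, while the reverse inclusion follows by transporting the RCD regular structure through the homeomorphism and using that the complex-analytic singular set of $X$ is exactly the complement of $X^\circ$. The Hausdorff bound $\dim_\mathcal{H}\cS(\hat X)\leq 2n-3$ then follows from the general stratification theory of non-collapsed $\mathrm{RCD}(K,N)$ spaces together with the K\"ahler rigidity argument of \cite{CCHSTT, FGS2} ruling out codimension-two singular strata in the presence of a compatible complex structure.
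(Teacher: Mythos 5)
Your high-level outline (regularize, invoke Theorem~\ref{thm:main2}, then run partial $C^0$-estimates for the homeomorphism and RCD stratification theory for the dimension bound) reflects the right circle of ideas, but there are two genuine gaps.

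First, the twisting issue. You propose taking a cscK representative of $[\omega_j]=\epsilon_j\alpha+\pi^*[\omega]$ on $Y$ and assert that the Calabi bound~\eqref{scb1} is then ``automatic'' because the scalar curvature is constant with average controlled by intersection numbers. The paper does not produce untwisted cscK metrics, and there is no reason to expect these classes carry one. What the paper produces (Proposition~\ref{regul}) are \emph{twisted} cscK metrics $\omega_\epsilon$ solving $\mathrm{R}(\omega_\epsilon)-\tr_{\omega_\epsilon}(\pi^*\eta)=c_\epsilon$, where $\eta=\ric(\omega)+\omega$; properness of the twisted Mabuchi energy comes precisely from the negative sign in the twisted K\"ahler--Einstein equation for $\omega$, following \cite{CC,Sz24}. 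For a twisted cscK metric the scalar curvature itself is \emph{not} constant, so your ``automatic'' Calabi bound does not hold. The paper instead extracts the $L^2$ scalar curvature bound from a Chern--Lu argument: it shows a uniform $L^2$ bound on $\tr_{\omega_\epsilon}(\pi^*\theta_X)$ by integrating the Chern--Lu inequality and absorbing the $|\ric(\omega_\epsilon)|^2$ term, then feeds this into $\int (\mathrm{R}(\omega_\epsilon))^2$ using the twisted cscK relation. This step is not cosmetic; you would need to reproduce it.

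Second, the non-smooth twist. You write as if one can construct the regularization on $Y$ directly for the given $\omega$. But $\eta=\ric(\omega)+\omega$ is in general only a non-negative current, so the twisted cscK equation with twist $\pi^*\eta$ on $Y$ is not a well-posed smooth PDE, and the properness machinery of \cite{CC} does not apply verbatim. This is exactly why the paper splits into Section 7 (the special case where $\eta$ is a smooth form) and Section 8 (the general case). Section 8 does \emph{not} reduce to Theorem~\ref{thm:main2} for $\omega$ itself. It first regularizes $\eta$ by smooth K\"ahler forms $\eta_i=\eta_0+\ddbar\psi_i$ (using the extension result of \cite{CG} and a regularized-max construction exploiting log terminality), solves the twisted K\"ahler--Einstein equation $\ric(\omega_i)=-\omega_i+\eta_i$ \emph{on $X$} to get auxiliary singular metrics $\omega_i$ satisfying the hypotheses of the special case (Theorem~\ref{specthm3}), establishes uniform Schwarz lemmas for $\omega_i$, proves smooth local convergence $\omega_i\to\omega$ on $X^\circ$, and then passes the RCD structure, volume-density gap, and partial $C^0$ machinery to the limit. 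Your proposal collapses this two-layer approximation into a single appeal to Theorem~\ref{thm:main2}, which is not available when $\eta$ is merely a current.

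The final portions of your sketch (separation of points by sections, De Philippis--Gigli stratification, ruling out codimension-one and codimension-two strata via the K\"ahler splitting arguments of \cite{Sz24}) do match the paper's Section 8, with the caveat that the paper also needs the zero-capacity statement for $V\setminus\cR_\epsilon(V)$ on iterated tangent cones (Lemma~\ref{0cap8}), obtained by combining $\dim_\mathcal{H}\cS\leq 2n-3$ with the Liu--Sz\'ekelyhidi structure theory; you should not take that step for granted.
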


We remark that Theorem \ref{thm:main3} is proved in \cite{FGS2} in the case when $-K_Y$ is $\pi$-effective. Theorem \ref{thm:main3} provide abundant examples of RCD spaces from algebraic geometry. For any projective variety $X$ as in Theorem \ref{thm:main3} and any ample line bundle $L \rightarrow X$, one can always construct infinitely many K\"ahler currents in $c_1(L)$ with Ricci curvature bounded below, which induce RCD spaces topologically and holomorphically equivalent to $X$.  There are many examples of these varieties such as those  that admit a crepant resolution \cite{S2} or those of Fano type \cite{Sz24, BJT}. If the Ricci curvature of $\omega$ in Theorem \ref{thm:main3} is bounded, the singular set of $\hat X$ will have Hausdorff dimension no greater than $2n-4$. 

Finally, we would like to point out that our main estimates in Theorem \ref{thm:main1} can also be applied to special families of collapsing K\"ahler metrics on Calabi-Yau fibrations, whose limits are singular  K\"ahler-Einstein metrics twisted with a Weil-Petersson metric introduced in \cite{ST1, ST2}.

\section{Uniform linear estimates}

Let $X$ be an $n$-dimensional compact K\"ahler manifold equipped with a fixed background K\"ahler metric $\theta_X$. We define $\mathcal{V}_B (X, \theta_X, n, A, p, K) \subset \mathcal{V} (X, \theta_X, n, A, p, K) $ with $p>n$ to be the set of  smooth K\"ahler metrics $\omega$ with 
\begin{equation}\label{l2ric}
\frac{1}{V_\omega}  \int_X |\ric(\omega)_{-} |^2 \omega^n \leq B, 
\end{equation}
where $\ric(\omega)_-$ denotes the absolute value of the smallest negative eigenvalue of $\ric(\omega)$ and it is defined to be zero if $\ric(\omega)\ge 0$.  We observe that 
\begin{eqnarray}\label{cal2ric}
\frac{1}{ V_\omega} \int_X |\ric(\omega)_{-} |^2 \omega^n &=& \frac{1}{ V_\omega} \int_X |\scR(\omega)|^2 \omega^n - \frac{1}{ V_\omega} \int_X \ric(\omega)\wedge \ric(\omega)\wedge \omega^{n-2} \\
&=& \mathcal{C}a(\omega) - [K_X]^2\cdot [\omega]^{n-2}.
\end{eqnarray}
Hence the assumption (\ref{l2ric}) is equivalent to a uniform bound for the Calabi energy for $\omega \in \mathcal{V} (X, \theta_X, n, A, p, K) $.

We consider the Laplace equation $\Delta_\omega u = f$
on $(X, \omega)$ for $f\in C^\infty(X)$, where $\Delta_\omega$ is the Laplace operator associated to the K\"ahler metric $\omega$. It is obvious that $\int_X f \omega^n = 0$.  We will begin our proof for Theorem \ref{thm:main1} with the following lemma. 

\begin{lemma} \label{lemma L infty u}

 Let $\omega \in \mathcal{V}(X, \theta_X, n, A, B, p, K)$.  Suppose 
 $$
\Delta u = f, ~\int_X u \omega^n=0
$$
for $u, f\in C^\infty(X)$.  Then there exists $C=C(X, \theta_X, n, A, B, p, K, \|f\|_{L^\infty(X)})>0$ such that 
\begin{equation}
\|u\|_{L^\infty(X)} +\frac{1}{ V_\omega} \int_X | \nabla u|^2_\omega \omega^n \leq C.
\end{equation}

\end{lemma}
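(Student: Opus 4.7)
The plan is to reduce both estimates to results already established in \cite{GPSS2}, using only the class bound $A$ and the Nash entropy bound $K$; the Calabi energy hypothesis $B$ is not needed for this particular lemma and appears in the dependence of $C$ only because $\mathcal{V}_B$ is the ambient space used throughout.

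First I would invoke the $L^\infty$ estimate for solutions of the Laplace equation proved in \cite{GPSS2}. That estimate was obtained by combining the non-sharp Sobolev-type inequality on $(\hat X, d_\omega, \omega^n)$ established there with a Moser iteration argument, and it requires only the class bound $I_\omega \leq A$, the Nash entropy bound $\mathcal{N}_{\theta_X, p}(\omega) \leq K$ with $p>n$, and a control on $\|f\|_{L^\infty}$. Combined with the normalization $\int_X u\,\omega^n = 0$ to eliminate the additive constant, this yields
\begin{equation*}
\|u\|_{L^\infty(X)} \leq C\bigl(X, \theta_X, n, A, p, K, \|f\|_{L^\infty(X)}\bigr).
\end{equation*}

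The gradient estimate then follows by a one-line integration by parts on the smooth closed manifold $X$. Multiplying $\Delta_\omega u = f$ by $u$, integrating against $\omega^n$, and inserting the $L^\infty$ bound from the previous step,
\begin{equation*}
\frac{1}{V_\omega}\int_X |\nabla u|_\omega^{2}\, \omega^n \;=\; -\frac{1}{V_\omega}\int_X u\, f\, \omega^n \;\leq\; \|u\|_{L^\infty(X)}\,\|f\|_{L^\infty(X)},
\end{equation*}
which is bounded by a constant of the claimed form.

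I expect no substantive obstacle here, since the heavy lifting sits in \cite{GPSS2}; the present lemma serves merely as the baseline zeroth- and first-order bound that will be bootstrapped to $W^{2,2}$ and $W^{1,4}$ estimates in the remainder of the proof of Theorem \ref{thm:main1}. It is only in that subsequent bootstrap — not here — that the Calabi energy bound $B$ genuinely enters, through a Bochner identity for $|\nabla u|_\omega^{2}$ that couples second derivatives of $u$ to $\ric(\omega)$, whose $L^2$ control on the negative part is supplied by $B$ via the identity \eqref{cal2ric}.
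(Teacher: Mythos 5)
Your argument is correct, and your observation that the Calabi energy bound $B$ plays no role in this lemma is accurate — it appears in the stated constant only because the lemma is phrased inside the ambient space used throughout the section. Your gradient estimate is identical to the paper's (multiply by $u$, integrate by parts, use the sup bounds on $u$ and $f$). The one genuine difference is the route to the $L^\infty$ bound: you propose to cite a Moser-iteration/Sobolev argument from \cite{GPSS2}, whereas the paper's proof goes directly through the Green's function representation formula
\[
u(x) \;=\; \frac{1}{V_\omega}\int_X u\,\omega^n \;-\; \int_X G(x,\cdot)\, \Delta_\omega u \,\omega^n \;=\; \int_X \bigl(-G(x,\cdot)\bigr)\, f \,\omega^n,
\]
and then invokes the uniform $L^q(X,\omega^n)$ bound for $G(x,\cdot)$ from \cite{GPSS1} with some $q = q(X,\theta_X,n,A,p,K) > 1$, giving $|u(x)| \le \|G(x,\cdot)\|_{L^q}\,\|f\|_{L^{q^*}} \le C\|f\|_{L^\infty}$ after taking the supremum over $x$. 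Both are legitimate; the Green's function route is shorter and also makes visible (as the paper remarks after the lemma) that one can relax $\|f\|_{L^\infty}$ to $\|f\|_{L^\gamma}$ for suitable $\gamma$ by H\"older, since the only place $f$ enters is paired against $G(x,\cdot) \in L^q$. A Moser iteration would deliver the same conclusion but obscures this refinement and requires re-deriving the iteration from the non-sharp Sobolev inequality rather than quoting a single estimate.
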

\begin{proof}
This follows immediately from the uniform $L^q(X,\omega)$-estimates \cite{GPSS1} on Green's function $G$ of $(X,\omega)$, for some $q=q(X, \theta_X, n, A, p, K)>1$. Indeed, for any point $x\in X$
$$u(x) = \frac{1}{V_\omega} \int_X u \omega^n + \int_X (-G (x,\cdot)) \Delta_\omega u \omega^n = \int_X (-G (x,\cdot)) f \omega^n.$$ The $L^\infty$-bound of $u$ follows from this by taking supremum over all $x\in X$. Multiplying the equation $\Delta u  = f$ by $u$ and integrating it over $(X,\omega^n)$ gives the $L^2(X,\omega)$-bound of $\nabla u$. 
\end{proof}

If fact, if the volume measure ratio is in $L^p$ for some $p>1$, then the $L^\infty$-bound for $f$ in the assumption of Lemma \ref{lemma L infty u} can be replaced by the $L^\gamma(X,\omega)$-bound for some $\gamma>n$. 
The following proposition combined with Lemma \ref{lemma L infty u} will imply the first part of Theorem \ref{thm:main1}.

\begin{prop} \label{linest1} Let $\omega \in \mathcal{V}_B(X, \theta_X, n, A, p, K)$.  Suppose \begin{equation}\label{lapeqn2}
\Delta_\omega u = f 
\end{equation}
for $u, f\in C^\infty(X)$. Then there exists $C=C(X, \theta_X, n, A, B, p, K, \|f\|_{L^\infty(X)})>0$ such that 
\begin{equation}\label{linear1}
\frac{1}{V_\omega} \int_X \left( \abs{\nabla \nabla u}_\omega + \abs{\nabla \overline{\nabla} u}_\omega  +   |\nabla u|_\omega^4\right) \omega^n  \le C, 
\end{equation}
where $\nabla$ and $\overline{\nabla}$  are covariant derivatives associated to $\omega$. 

\end{prop}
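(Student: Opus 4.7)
The plan is to combine the Bochner--Weitzenb\"ock identity (integrated over $X$) with an independent integration-by-parts estimate for $|\nabla u|_\omega^4$, and then to close the resulting coupled pair of inequalities by AM--GM. Set
\[
I_2 := \frac{1}{V_\omega}\int_X \bigl(\abs{\nabla \nabla u}_\omega + \abs{\nabla \overline{\nabla} u}_\omega\bigr)\,\omega^n, \qquad I_4 := \frac{1}{V_\omega}\int_X |\nabla u|_\omega^4\, \omega^n.
\]
The target is to produce two estimates of the form $I_2 \le C_0 + C_1 \sqrt{I_4}$ and $I_4 \le C_2 + C_3 \sqrt{I_2 I_4}$, from which uniform a priori bounds on both $I_2$ and $I_4$ follow.

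For the first inequality, apply the K\"ahler Bochner formula for the real function $u$, schematically
\[
\tfrac{1}{2}\Delta_\omega |\nabla u|_\omega^2 \,=\, \abs{\nabla\nabla u}_\omega + \abs{\nabla\overline{\nabla} u}_\omega + \mathrm{Re}\langle \nabla u,\, \nabla \Delta_\omega u\rangle + \ric(\nabla u,\overline{\nabla u}).
\]
Integrating over $X$ kills the left-hand side; one further integration by parts converts the $\mathrm{Re}\langle \nabla u,\nabla f\rangle$ term into $-\int_X f^2\omega^n$ via $\Delta_\omega u=f$; and the Ricci term is controlled by the $\mathcal{V}_B$-hypothesis \eqref{l2ric} and Cauchy--Schwarz:
\[
-\int_X \ric(\nabla u,\overline{\nabla u})\omega^n \,\le\, \Bigl(\int_X |\ric(\omega)_-|^2 \omega^n\Bigr)^{1/2}\Bigl(\int_X |\nabla u|_\omega^4\,\omega^n\Bigr)^{1/2} \,\le\, V_\omega\sqrt{B\,I_4}.
\]
Dividing by $V_\omega$ yields the desired $I_2 \le \|f\|_\infty^2 + \sqrt{B}\,\sqrt{I_4}$.

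For the second inequality, test $\Delta_\omega u = f$ against $u|\nabla u|_\omega^2$ and integrate by parts. Expanding $\nabla(u|\nabla u|^2) = |\nabla u|^2 \nabla u + u\nabla|\nabla u|^2$ gives
\[
\int_X |\nabla u|_\omega^4 \,\omega^n = -\int_X u|\nabla u|_\omega^2\, f\,\omega^n \;-\; \int_X u\,\langle \nabla |\nabla u|_\omega^2,\, \nabla u\rangle\,\omega^n.
\]
The first integral on the right is bounded by $\|u\|_\infty\|f\|_\infty \int_X |\nabla u|_\omega^2\,\omega^n \le CV_\omega$ using Lemma \ref{lemma L infty u}. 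For the second, the pointwise estimate $|\nabla |\nabla u|_\omega^2| \le C\bigl(|\nabla\nabla u|_\omega + |\nabla\overline{\nabla} u|_\omega\bigr)|\nabla u|_\omega$, followed by Cauchy--Schwarz, bounds it by $C\|u\|_\infty V_\omega\sqrt{I_2 I_4}$. Since $\|u\|_\infty$ is uniformly controlled by Lemma \ref{lemma L infty u}, dividing by $V_\omega$ gives $I_4 \le C_2 + C_3 \sqrt{I_2 I_4}$.

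To close the system, AM--GM applied to the second inequality produces $I_4 \le 2C_2 + C_3^2 I_2$; substituting into the first gives $I_2 \le \|f\|_\infty^2 + \sqrt{B}\sqrt{2C_2 + C_3^2 I_2}$, a quadratic inequality in $\sqrt{I_2}$ that forces a uniform bound on $I_2$, whence on $I_4$. The main obstacle is precisely this coupling: with only the $L^2$-hypothesis on $\ric_-$ supplied by the Calabi-energy bound, Cauchy--Schwarz in the Bochner identity produces $\sqrt{I_4}$ rather than an expression comparable to $I_2$ alone, so one cannot avoid introducing $I_4$ as a second unknown. The Calabi-energy hypothesis is exactly the input that makes the independent $|\nabla u|^4$-integration-by-parts — which itself relies crucially on the uniform $L^\infty$-bound for $u$ from Lemma \ref{lemma L infty u} (derived via the Green's function estimates of \cite{GPSS1}) — close the system via the quadratic argument above.
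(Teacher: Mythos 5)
Your proof is correct, and it takes a genuinely different route from the paper's. The paper's argument works with the single auxiliary function $\phi = u^\beta$ for $\beta = 8/9 < 1$; when one applies Bochner to $|\nabla\phi|^2$, differentiating the power twice produces a term $\frac{2(1-\beta)}{\beta}\phi^{-2}|\nabla\phi|^4$ on the favorable side of the inequality, and this good $|\nabla\phi|^4$ term is what absorbs the Ricci contribution via Young's inequality $2\,\ric(\omega)_-|\nabla\phi|^2 \le \frac{1-\beta}{2\beta}\phi^{-2}|\nabla\phi|^4 + \frac{2\beta}{1-\beta}\phi^2|\ric(\omega)_-|^2$. After integration, a single inequality yields all three quantities at once. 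Your proposal instead decouples the estimate into two pieces: integrating the raw Bochner identity for $|\nabla u|^2$ gives $I_2 \le C_0 + \sqrt{B}\sqrt{I_4}$ after Cauchy--Schwarz against the Calabi-energy bound, while testing $\Delta_\omega u = f$ against $u|\nabla u|^2_\omega$ and integrating by parts (using the Kato-type pointwise bound $|\nabla|\nabla u|^2| \le (|\nabla\nabla u|+|\nabla\overline\nabla u|)|\nabla u|$ and the uniform $\|u\|_{L^\infty}$ and $\|\nabla u\|_{L^2}$ bounds from Lemma \ref{lemma L infty u}) gives $I_4 \le C_2 + C_3\sqrt{I_2 I_4}$, and the pair closes by AM--GM. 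Both arguments hinge on the same two inputs (the Calabi-energy hypothesis and the $L^\infty$ bound on $u$), and both are legitimate; the paper's power-function trick has the practical advantage that it is a pointwise differential inequality, which is why the paper can reuse the identical computation with $\psi = h^\beta$ and cutoffs for the Green's function estimate in Proposition \ref{prop 2-3} with essentially no change, whereas your coupled-global-integrals approach would require some additional bookkeeping of boundary terms when localizing.
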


\begin{proof}  First, we can assume that $u\geq 1$ by replacing $u$ by $u- \inf_X u +1$. $\|u\|_{L^\infty(X)}$ is still uniformly bounded.  For convenience, we drop the underscore $\omega$  for norms of the covariant derivatives in the proof.  We apply the Bochner's formula to $|\nabla u|^2$ and have 
\begin{equation}\label{bocheqn}
\Delta \abs{\nabla u} = \abs{\nabla \nabla u } + \abs{\nabla \overline{\nabla} u } -  2 \innpro{\nabla u, \nabla f}_{\omega} + 2 \ric_{\omega} (\nabla u, \overline{\nabla} u).
\end{equation}
We let $\phi := u ^\beta$ for some $\beta\in (0,1]$ to be determined.  Then
\begin{equation}\label{eqn:s2-10}
\Delta \phi = - \beta u^{\beta - 1} f + \beta( \beta - 1) u^{\beta - 2} \abs{\nabla u} = - \beta \phi^{1-\frac{1}{\beta}} f + \frac{\beta - 1}{\beta} \phi^{-1} \abs{\nabla \phi}.
\end{equation} 
Applying the Bochner's formula again, we have
\begin{align*}
\Delta \abs{\nabla \phi} \ge &~  \abs{\nabla \nabla \phi} + \abs{\nabla \overline{\nabla} \phi} + 2\innpro{\nabla \phi , \nabla \big(- \beta \phi^{1-\frac{1}{\beta}} f + \frac{\beta - 1}{\beta} \phi ^{-1} \abs{\nabla \phi}\big)} - 2\ric(\omega)_- \abs{\nabla \phi}\\
\ge &~ \abs{\nabla \nabla \phi }  + \abs{\nabla \overline{\nabla} \phi }  - 2\beta \innpro{\nabla \phi , \nabla ( \phi ^{1-\frac{1}{\beta}} f)}  -2 \ric(\omega )_- \abs{\nabla \phi } \\
&~ +\frac{2(1-\beta)}{\beta} \phi ^{-2} |\nabla \phi | ^4  - \frac{4(1-\beta)}{\beta} \phi ^{-1} \abs{\nabla \phi }  \cdot |\nabla |\nabla \phi \|  .
\end{align*}
For the last term, we have
\begin{align*}
\frac{4(1-\beta)}{\beta} \phi ^{-1} \abs{\nabla \phi }  \cdot |\nabla |\nabla \phi \|  \le &~  \frac{1}{2} |\nabla |\nabla \phi \| ^2 + \frac{8(1-\beta)^2}{\beta^2} \phi ^{-2} |{\nabla \phi }|^4 \\
\le &~ \frac{1}{4} ( \abs{\nabla \nabla \phi }  + \abs{\nabla \overline{\nabla} \phi } ) + \frac{1-\beta}{\beta} \phi ^{-2} |{\nabla \phi }|^4 ,
\end{align*}
where in the second inequality we use the Kato's inequality and choose $\beta$ close enough to $1$, and in fact, $\beta = 8/9$ suffices. The term involving the Ricci curvature  satisfies
\begin{equation*}
2 \ric(\omega )_- \abs{\nabla \phi } \le \frac{1-\beta}{2\beta} \phi ^{-2} |\nabla \phi | ^4 + \frac{2\beta}{1-\beta} \phi ^2 |\ric(\omega )_-|^2.
\end{equation*}
Combining these, we get
\begin{align*}
\Delta \abs{\nabla \phi }    
\ge &~ \frac{1}{2}(\abs{\nabla \nabla \phi }  + \abs{\nabla \overline{\nabla} \phi }  ) - 2\beta \innpro{\nabla \phi , \nabla ( \phi ^{1-\frac{1}{\beta}} f)}  
+\frac{(1-\beta)}{2 \beta} \phi ^{-2} |\nabla \phi | ^4 \\
&~ - \frac{2\beta}{1-\beta} \phi ^2 |\ric(\omega )_-|^2 
\end{align*}
Integrating this inequality over $(X, \omega ^n)$ yields
\begin{align*}
&~ \frac{1}{V_\omega} \int_X \bk{ \frac{1}{2}(\abs{\nabla \nabla \phi }  + \abs{\nabla \overline{\nabla} \phi }  ) 
+  \frac{(1-\beta)}{2 \beta} \phi ^{-2} |\nabla \phi | ^4} \omega^n \\
\le&~\frac{1}{V_\omega}   \int_X \big ( 2\beta \innpro{\nabla \phi , \nabla ( \phi ^{1-\frac{1}{\beta}} f)}  + \frac{2\beta}{1-\beta} \phi ^2 |\ric(\omega )_-|^2  \big)\omega ^n \\
\le &~ - \frac{1}{V_\omega}  \int_X 2\beta \phi ^{1-\frac{1}{\beta}} f  \Delta  \phi  + CB   \\
=&~-  \frac{1}{V_\omega}  \int_X 2\beta \phi ^{1-\frac{1}{\beta}} f ( - \beta \phi ^{1-\frac{1}{\beta}} f + \frac{\beta - 1}{\beta} \phi ^{-1} \abs{\nabla \phi } ) + C B\\
\le &~ C,
\end{align*}
where in the last step we use the fact that $\phi $ and $f$ are both uniformly bounded. Since $\nabla \phi  = \beta u^{\beta - 1} \nabla u$, and $ \nabla^2 \phi  = \beta u^{\beta - 1} \nabla^2 u + \beta (\beta - 1) u^{\beta - 2} \nabla u \otimes \nabla u  $, we see that 
\begin{equation*}
\frac{1}{V_\omega}  \int_X \left(\abs{\nabla \nabla u}  + \abs{\nabla \overline{\nabla} u }  +   |\nabla u | ^4\right)\omega ^n  \le C.
\end{equation*}
This completes the proof of Proposition \ref{linest1}.
\end{proof}
\begin{remark}\label{remark 2.1}
By a very similar argument as in Proposition \ref{linest1}, one can obtain a slight generalization for the linear estimate for $u$ by only assuming a $L^{2-\epsilon}$-bound for $\ric(\omega)_-$  for a small $\epsilon>0$. In this case, one would have  
\begin{equation}\label{eqn:weak1}\int_X \left( |\nabla u|^{2 (2- \delta)}_\omega + \frac{ \abs{\nabla \overline{\nabla} u  }_\omega + \abs{\nabla \nabla u} _\omega  }{|\nabla u|^{2\delta}_\omega}    \right)\omega^n \le C. 
\end{equation}
for some small $\delta = \delta(\epsilon)>0$.

\end{remark}


We now complete the proof of Theorem \ref{thm:main1} by deriving uniform Sobolev estimates for Green's functions.

\begin{prop}\label{prop 2-3} Let $\omega \in \mathcal{V}(X, \theta_X, n, A, B, p, K)$. For any $x \in X$  and $r>0$,  
there exists a $C=C(X, \theta_X, n, A, B, p, K,  r)>0$ 
 such that 
\begin{equation} \label{greenes1}
\frac{1}{V_\omega} \int_{X\setminus B_{\omega}(x, r)} \left(V_\omega^2  \abs{\nabla \nabla G(x,\cdot)}_\omega + V_\omega^2 \abs{\nabla \overline{\nabla} G(x,\cdot)}_\omega + V_\omega^4|\nabla G(x,\cdot)|^4_\omega  \right) \omega^n \le C.
\end{equation}
\end{prop}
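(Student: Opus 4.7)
The strategy is to reduce Proposition \ref{prop 2-3} to Proposition \ref{linest1} via the rescaled Green's function $\tilde G := V_\omega G(x,\cdot)$, which satisfies $\Delta_\omega \tilde G = 1 - V_\omega\delta_x$ in the sense of distributions and is smooth on $X\setminus\{x\}$. Under this rescaling, the factors $V_\omega^2$ and $V_\omega^4$ appearing in \eqref{greenes1} are absorbed, and the claim becomes
$$\frac{1}{V_\omega}\int_{X\setminus B_\omega(x,r)}\bigl(\abs{\nabla\nabla \tilde G}_\omega + \abs{\nabla \overline\nabla \tilde G}_\omega + |\nabla \tilde G|_\omega^4\bigr)\omega^n \leq C,$$
which is exactly of the form \eqref{linear1}; the only obstruction to invoking Proposition \ref{linest1} directly is that $\tilde G$ fails to be globally smooth and that $\Delta_\omega\tilde G\equiv 1$ breaks down at $x$.

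To isolate the singularity I would fix $r_1 = r/3$ and build a smooth cutoff $\chi:X\to[0,1]$ with $\chi\equiv 0$ on $B_\omega(x,r_1)$, $\chi\equiv 1$ on $X\setminus B_\omega(x,2r_1)$, and uniform bounds $\|\chi\|_{L^\infty}+\||\nabla\chi|_\omega\|_{L^\infty}+\|\Delta_\omega\chi\|_{L^\infty}\leq C(r)$; such a cutoff can be produced by a heat kernel mollification of $d_\omega(x,\cdot)$, whose regularization is controlled by the diameter and heat kernel estimates of \cite{GPSS2}. The one-sided Green's function bound of \cite{GPSS1} yields $\tilde G\leq C$ globally and $|\tilde G|\leq C(r)$ on $X\setminus B_\omega(x,r_1)$. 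Choosing $C_0=C_0(r)$ so that $\tilde G + C_0\geq 1$ on $X\setminus B_\omega(x,r_1)$, I would set
$$v := \chi(\tilde G + C_0) + (1-\chi),$$
which is smooth on $X$, satisfies $v\geq 1$ everywhere, and coincides with $\tilde G+C_0$ on $X\setminus B_\omega(x,2r_1)$.

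A direct computation, using $\Delta_\omega\tilde G\equiv 1$ on the support of $\chi$, gives
$$\Delta_\omega v = \chi + 2\innpro{\nabla\chi,\nabla\tilde G}_\omega + (\tilde G + C_0 - 1)\Delta_\omega\chi,$$
whose first and third terms are bounded in $L^\infty$ by the cutoff bounds and the two-sided control of $\tilde G$ on the annulus. For the cross term, a Caccioppoli estimate applied to $\Delta_\omega\tilde G\equiv 1$ on $B_\omega(x,2r_1)\setminus B_\omega(x,r_1)$ (where $\tilde G$ is already two-sided bounded) furnishes a uniform $L^2$-bound for $|\nabla\tilde G|_\omega$ on the support of $\nabla\chi$, so altogether $\|\Delta_\omega v\|_{L^2(X,\omega^n)}\leq C V_\omega^{1/2}$. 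Running the Bochner argument of Proposition \ref{linest1} with $u$ replaced by $v$, $f$ replaced by $\Delta_\omega v$, and $\beta=8/9$, and observing that only the $L^2$-norm of the right-hand side enters the final integrated estimate once $v\geq 1$ and $\|v\|_{L^\infty}\leq C$ are in hand, one obtains the analogue of \eqref{linear1} for $v$. Restricted to $X\setminus B_\omega(x,r)\subset X\setminus B_\omega(x,2r_1)$, where $v$ and $\tilde G$ differ by the constant $C_0$, this yields \eqref{greenes1}. The main obstacle is the uniform construction of $\chi$ together with the uniform Caccioppoli bound for $|\nabla\tilde G|_\omega$ on the transition annulus; both rest on the diameter, Sobolev and Green's function estimates of \cite{GPSS1,GPSS2}, which are already available under the standing hypotheses, so once they are invoked the remainder of the argument is a direct rerun of the proof of Proposition \ref{linest1}.
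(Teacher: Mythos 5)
Your proposal shares the paper's core idea (a Bochner argument applied to a power $h^\beta$ of the rescaled Green's function $h=V_\omega G(x,\cdot)$ away from $x$), but the mechanism you use to isolate the singularity at $x$ is different and, as written, has a genuine gap.

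The gap is in the construction of the cutoff $\chi$. You require
$\|\Delta_\omega\chi\|_{L^\infty}\leq C(r)$ (or, after your Caccioppoli step, at least $\|\Delta_\omega\chi\|_{L^2(\omega^n)}\leq C(r)V_\omega^{1/2}$ uniformly in $\omega$), because the gluing $v=\chi(\tilde G+C_0)+(1-\chi)$ produces the term $(\tilde G+C_0-1)\Delta_\omega\chi$ in $\Delta_\omega v$. But the standing hypotheses only provide a Nash entropy bound and a Calabi energy bound, i.e.\ an $L^2$ bound on $\ric(\omega)_-$; there is no pointwise Ricci lower bound, so the Laplacian comparison theorem (or the Cheeger--Colding good-cutoff construction) is unavailable, and for a radial cutoff $\chi=\eta(d_\omega(x,\cdot))$ one has no uniform control on $\eta'(d)\,\Delta_\omega d$. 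Heat-kernel mollification does not rescue this: it does not produce a function that is exactly $0$ and $1$ on the prescribed regions, and even ignoring that, the bound $|\Delta_\omega(e^{t\Delta}\eta)|\lesssim t^{-1}$ is only as good as one's ability to take $t$ small while staying in the right level sets, which again needs quantitative control one does not have here. In short, the family $\mathcal{V}_B$ simply does not admit uniformly controlled Laplacian cutoffs, and the proposal leans on them.

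The paper's proof is designed precisely to avoid this. Instead of patching $\tilde G$ into a globally smooth function, it works on the exterior sets $U_s=X\setminus B_\omega(x,s)$ directly. It first multiplies $\Delta_\omega h=1$ by $h\rho_1^2$ and integrates by parts to get the weighted $L^2$ gradient bound; it then applies Bochner to $\psi=h^\beta$ on $U_{2r}$, multiplies the resulting differential inequality by $\rho_2^2$ (note the \emph{square}), and integrates. The crucial point is that the only term involving the cutoff that needs control, $\int\rho_2^2\,\Delta\abs{\nabla\psi}\,\omega^n$, is integrated by parts once into $-4\int\rho_2\abs{\nabla\psi}^{1/2}\,\cdot$ (never mind the exponents) $\langle\nabla\rho_2,\nabla|\nabla\psi|\rangle$, so \emph{only} $\nabla\rho_2$ appears, never $\Delta\rho_2$. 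A gradient bound $|\nabla\rho_i|_\omega\leq 4/r$ is trivially uniform (radial in $d_\omega$, $|\nabla d_\omega|=1$ a.e.), so no curvature assumption is needed for the cutoffs at all.

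Two secondary remarks. First, the claim that the Bochner argument in Proposition~\ref{linest1} ``only uses the $L^2$ norm of the right-hand side'' is plausible once you feed in $\|v\|_{L^\infty}\leq C$ and $v\geq 1$ directly and absorb the cross term $\int f|\nabla\phi|^2$ into the $|\nabla\phi|^4$ term on the left (using that $\phi=v^\beta$ is smooth on compact $X$, so the quantity being absorbed is a priori finite); but this is an additional step beyond a ``direct rerun'' and would need to be written out, since the paper states and uses $\|f\|_{L^\infty}$. Second, your Caccioppoli estimate for $\int|\nabla\tilde G|^2$ on the annulus is fine and is exactly the paper's step \eqref{eqn:new1.16} with $\rho_1$. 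If you rework the proof to multiply by the squared cutoff at the integration stage and integrate by parts once, rather than gluing, the argument closes without needing any Laplacian control on the cutoffs.
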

\begin{proof} Let $U_s = X\setminus B_\omega(x, s)$ for $s>0$. By \cite{GPSS1}, we may also assume for a fixed $b>>1$, 
\begin{equation}\label{eqn:n1.7}
\frac{1}{V_\omega}  \le G(x,y) \le \frac{1}{V_\omega}  \frac{C_b}{d_\omega (x,y)^{ b}},\quad \forall x\neq y \in X.
\end{equation}

Consider the function $h (y): = V_\omega G (x,y)$, which satisfies the equation 
$$\Delta  h (y) = 1  $$
 for $y \in  U_{r}$. We have by \eqref{eqn:n1.7}
\begin{equation}\label{eqn:new 1.15}
1\le h  \le   C, \quad \text{in } U_{r},
\end{equation}
where all the constants $C$  depend on $r$. We choose a cut-off function $\rho_1$ satisfying
$$ \rho_1|_{U_{2r}}=1, ~ \rho_1|_{X\setminus U_r} =0, ~ |\nabla \rho_1|_{\omega } \le \frac{4}{ r}.$$ 
Multiplying $h  \rho_1^2$ on both sides of the equation $\Delta  h  = 1/V_\omega $ and applying integration by parts, we obtain 
\begin{align*}
-\frac{1}{V_\omega} \int_{U_{r}} h  \rho_1^2 \omega ^n =&~ \frac{1}{V_\omega} \int_{ U_{r}}\rho_1^2 |\nabla h |_{\omega }^2  + 2 \rho_1 h  \innpro{\nabla \rho_1, \nabla h } \\
\ge &~ \frac{1}{V_\omega} \int_{ U_{r}} \frac{1}{2}\rho_1^2 |\nabla h |_{\omega }^2  - 4  h ^2 |\nabla \rho_1| ^2. 
\end{align*}
Hence, we have
\begin{equation}\label{eqn:new1.16}
\frac{1}{V_\omega}\int_{ U_{2r}} |\nabla h |_{\omega }^2 \omega ^n \le \frac{1}{V_\omega} \int_{ U_{r}}\rho_1^2 |\nabla h |_{\omega }^2 \omega ^n \le C_x.
\end{equation}

We write $\psi  := h ^\beta$ for $\beta = 8/9$. We have
\begin{equation}\label{eqn:n1.11}
\Delta  \psi  =  \beta  h ^{\beta - 1}  + \beta( \beta - 1) h ^{\beta - 2} \abs{\nabla h  }  = \beta  \psi ^{1 - \frac{1}{\beta}} + \frac{\beta - 1}{\beta} \psi ^{-1} \abs{\nabla \psi } .
\end{equation} By the Bochner's formula, we obtain that
\begin{align*}
\Delta  \abs{\nabla \psi }  \ge &~  \abs{\nabla \nabla \psi }  + \abs{\nabla \overline{\nabla} \psi }  + 2\innpro{\nabla \psi , \nabla \big(\beta  \psi ^{1- \frac{1}{\beta} } + \frac{\beta - 1}{\beta} \psi ^{-1} \abs{\nabla \psi } \big)}  - 2\ric(\omega )_- \abs{\nabla \psi } \\
\ge &~ \abs{\nabla \nabla \psi }  + \abs{\nabla \overline{\nabla} \psi }  + 2\beta  \innpro{\nabla \psi , \nabla ( \psi ^{1-\frac{1}{\beta}})}  -2 \ric(\omega )_- \abs{\nabla \psi } \\
&~ +\frac{2(1-\beta)}{\beta} \psi ^{-2} |\nabla \psi | ^4  - \frac{4(1-\beta)}{\beta} \psi ^{-1} \abs{\nabla \psi }  \cdot |\nabla |\nabla \psi \|  .
\end{align*}
We estimate the  last term as follows
\begin{align*}
\frac{4(1-\beta)}{\beta} \psi ^{-1} \abs{\nabla \psi }  \cdot |\nabla |\nabla \psi \|  \le &~  \frac{1}{2} |\nabla |\nabla \psi \| ^2 + \frac{8(1-\beta)^2}{\beta^2} \psi ^{-2} |{\nabla \psi }|^4 \\
\le &~ \frac{1}{4} ( \abs{\nabla \nabla \psi }  + \abs{\nabla \overline{\nabla} \psi } ) + \frac{1-\beta}{\beta} \psi ^{-2} |{\nabla \psi }|^4 ,
\end{align*}
where in the second inequality we use the Kato's inequality. The term involving the Ricci curvature  satisfies
\begin{equation*}
2 \ric(\omega )_- \abs{\nabla \psi } \le \frac{1-\beta}{2\beta} \psi ^{-2} |\nabla \psi | ^4 + \frac{2\beta}{1-\beta} \psi ^2 |\ric(\omega )_-|^2.
\end{equation*}
Combining these, we get
\begin{equation}\label{eqn:new1.18}\begin{split}
\Delta  \abs{\nabla \psi }    
\ge &~ \frac{1}{2}(\abs{\nabla \nabla \psi }  + \abs{\nabla \overline{\nabla} \psi }  ) +  2\beta  \innpro{\nabla \psi , \nabla  \psi ^{1-\frac{1}{\beta}}}  
+\frac{(1-\beta)}{2 \beta} \psi ^{-2} |\nabla \psi | ^4 \\
&~ - \frac{2\beta}{1-\beta} \psi ^2 |\ric(\omega )_-|^2. 
\end{split}\end{equation}
We now choose a cut-off function $\rho_2$ satisfying
$$ \rho_2|_{U_{3r}}=1, ~ \rho_2|_{X\setminus U_{2r}} =0, ~ |\nabla \rho_2|_{\omega } \le \frac{4}{ r}.$$ 
We multiply both sides of \eqref{eqn:new1.18} by $\rho_2^2$ and integrate the resulted inequality over $ U_{2r}$. We obtain 
\begin{equation}\label{eqn:new1.19}\begin{split}
&~  \frac{1}{V_\omega} \int_{ U_{2r}} \rho_2^2 \Big(\frac{1}{2}(\abs{\nabla \nabla \psi }  + \abs{\nabla \overline{\nabla} \psi }  ) 
+\frac{(1-\beta)}{2 \beta} \psi ^{-2} |\nabla \psi | ^4\Big) \omega ^n \\
\le &~  \frac{1}{V_\omega} \int_{ U_{2r}}  \frac{2\beta}{1-\beta} \rho_2^2 \psi ^2 |\ric(\omega )_-|^2 - 
2\beta \rho_2^2 \innpro{\nabla \psi , \nabla  \psi ^{1-\frac{1}{\beta}}}   + \rho_2^2 \Delta  \abs{\nabla \psi } .
\end{split}\end{equation}
The first term on the RHS of \eqref{eqn:new1.19} is bounded by $C B$. The second term satisfies 
$$ \int_{ U_{2r}}    
\frac{2(1-\beta)}{V_\omega}\rho_2^2 \psi ^{-\frac{1}{\beta}}\abs{\nabla \psi }  \omega^n \le C,$$
by \eqref{eqn:new1.16} and the equation $\nabla \psi  = \beta h ^{\beta - 1} \nabla h $. The last term satisfies 
\begin{align*}
 \frac{1}{V_\omega} \int_{ U_{2r}}   \rho_2^2 \Delta  \abs{\nabla \psi } \omega^n &~ = -   \frac{1}{V_\omega} \int_{U_{2r}}   4 \rho_2 |\nabla \psi |  \cdot \innpro{\nabla \rho_2,  \nabla|{\nabla \psi }| }  \omega^n \\
&~\le   \frac{1}{V_\omega} \int_{ U_{2r}}   4 \rho_2 |\nabla \psi |  \cdot |\nabla \rho_2|  \cdot |\nabla|{\nabla \psi| }|  \omega^n\\
&~\le   \frac{1}{V_\omega} \int_{ U_{2r}}  \left( \frac{1}{2}\rho_2^2|\nabla|{\nabla \psi |}|  ^2 + 
  8  |\nabla \psi |  ^2\cdot |\nabla \rho_2|  ^2  \right)\omega^n\\
  &~\le    \frac{1}{V_\omega}\int_{ U_{2r}}  \frac{1}{4}\rho_2^2(\abs{\nabla \nabla\psi }  + \abs{\nabla\overline{\nabla} \psi } ) \omega^n+ C.
   \end{align*}
Combining the above estimates, we obtain
\begin{align*}
&~ \frac{1}{V_\omega} \int_{ U_{3r}} \Big(\frac{1}{4}(\abs{\nabla \nabla \psi }  + \abs{\nabla \overline{\nabla} \psi }  ) 
+\frac{(1-\beta)}{2 \beta} \psi ^{-2} |\nabla \psi | ^4\Big) \omega ^n \\
\le &~ \frac{1}{V_\omega}\int_{ U_{3 r}} \rho_2^2 \Big(\frac{1}{4}(\abs{\nabla \nabla \psi }  + \abs{\nabla \overline{\nabla} \psi }  ) 
+\frac{(1-\beta)}{2 \beta} \psi ^{-2} |\nabla \psi | ^4\Big) \omega ^n\\
\le &~ C.
\end{align*}
This directly implies   that 
$$ \frac{1}{V_\omega} \int_{U_{3r}} \Big(\abs{\nabla \nabla h }  + \abs{\nabla \overline{\nabla} h }   
+ |\nabla h | ^4\Big) \omega ^n\le C$$ 
and we have completed the proof of Proposition by replacing $r$ by $3r$. 
\end{proof}

\section{Regularization and the Lipchitz property for eigenfunctions}

We will now apply the Sobolev estimates of Theorem \ref{thm:main1} to establish analytic estimates on singular  K\"ahler spaces. In this section, we will always assume that $X$ is an $n$-dimensional compact normal K\"ahler variety. 

\begin{definition}\label{smm} A positive closed $(1,1)$-current $\theta$ on $X$ is said to be a smooth K\"ahler metric if for any $p\in X$, there exists a local holomorphic embedding $\iota: U \rightarrow \mathbb{C}^N$ from an open neighborhood $U$ of $p$ such that $\theta$ is the restriction of a smooth K\"ahler metric $\tilde \theta$  in an open neighborhood $\iota(U) \subset \mathbb{C}^N$. 

\end{definition}

There always exist smooth K\"ahler metrics on projective varieties via projective embbedings. Let $\pi: Y \rightarrow X$ be a resolution singularities for $X$. We fix smooth background K\"ahler metrics $\theta_X$ and $\theta_Y$ on $X$ and $Y$ respectively.  Let $\omega \in C^\infty(X^\circ) \cap \nu(X, \theta_X, n, A, p, K)$ be a singular K\"ahler metric on $X$ with $p>n$.  we assume that $\omega$ admits the following regularization. 

\begin{enumerate}

\item There exist a sequence smooth K\"ahler metrics $\omega_j$ and a K\"ahler class $\alpha$ on $Y$ satisfying
\begin{equation}\label{cls2} 
[\omega_j] = [\pi^*\omega]+\epsilon_j \alpha, ~\epsilon_j \rightarrow 0^+, ~ \omega_j \rightarrow \pi^*\omega~ {\rm in}~ C^2_{loc}(\pi^{-1}(X^\circ)). 
\end{equation}

\item There exists $p'>n$ and $K'>0$ such that for all $j$, 
\begin{equation}\label{nashb2}
\mathcal{N}_Y(\omega_j) =\frac{1}{V_{\omega_j}} \int_Y \left| \log \left( V_{\omega_j}^{-1}\frac{\omega_j^n}{\theta_Y^n} \right) \right|^{p'} \omega_j^n \leq K' .
\end{equation}

\item There exists $B>0$ such that for all $j$, 
\begin{equation}\label{ricp2}
\int_Y |\ric(\omega_j)_{-}|^2 \omega_j^n \leq B.
\end{equation}

\end{enumerate}

The following is the main result in this section. 

\begin{prop} \label{thm:lip2} Let $X$ be normal K\"ahler variety with log terminal singularities of $\dim_{\mathbb{C}}(X)=n$.   Suppose $\omega \in \mathcal{V}(X, \theta_X, n, A, p, K) $ with $p>n$ is a singular K\"ahler metric satisfying the following properties.

\smallskip

\begin{enumerate}

\item $\ric(\omega) \geq - \omega$ in $X^\circ$.

\medskip

\item $\omega$ admits a regularization $(Y, \{\omega_j\}_{j=1}^\infty)$ satisfying (\ref{cls2}),  (\ref{nashb2}) and (\ref{ricp2}). 

\end{enumerate}
\medskip
Then every eigenfunction of $\Delta_\omega$ is Lipschitz.

\end{prop}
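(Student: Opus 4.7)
The plan is to establish a uniform pointwise bound $|\nabla u|_\omega \leq L$ on the smooth locus $X^\circ$; the Lipschitz property on $(\hat X, d_\omega)$ then follows because $u$ extends by uniform continuity to the metric completion, using that $\mathcal{S}(X)$ has vanishing capacity.

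\textbf{Step 1: Initial $W^{1,4}$ bound for $u$.} Approximate the eigenfunction $u$, viewed via pullback on $Y$, by eigenfunctions $u_j \in C^\infty(Y)$ of $\Delta_{\omega_j}$ with $\lambda_j \to \lambda$ and $u_j \to \pi^{*} u$ in $C^2_{\mathrm{loc}}(\pi^{-1}(X^\circ))$, using spectral convergence under the regularization. The hypotheses of Theorem \ref{thm:main1} on $(Y, \omega_j)$ are met: (\ref{nashb2}) is the Nash entropy bound and (\ref{ricp2}) is the Calabi energy bound. Applying Theorem \ref{thm:main1} with source $f = -\lambda_j u_j$ (whose $L^\infty$-norm is controlled by a standard Green's function bootstrap as in Lemma \ref{lemma L infty u}) yields uniform bounds
\begin{equation*}
\|u_j\|_{L^\infty(Y)} + \frac{1}{V_{\omega_j}} \int_Y \bigl( |\nabla\nabla u_j|^2_{\omega_j} + |\nabla\overline{\nabla} u_j|^2_{\omega_j} + |\nabla u_j|^4_{\omega_j} \bigr)\, \omega_j^n \leq C.
\end{equation*}
Passing to the limit via $C^2_{\mathrm{loc}}$ convergence and Fatou transfers these to $u$ on $(\hat X, \omega^n)$; in particular $w := |\nabla u|^2_\omega \in L^2(\hat X, \omega^n)$.

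\textbf{Step 2: Bochner subsolution on $X^\circ$.} On $X^\circ$ the metric $\omega$ is smooth with $\ric(\omega) \geq -\omega$, and $u$ is smooth by elliptic regularity for $\Delta_\omega u = -\lambda u$. The Bochner formula gives the pointwise inequality
\begin{equation*}
\Delta_\omega w \;\geq\; |\nabla\nabla u|^2_\omega + |\nabla\overline{\nabla} u|^2_\omega + 2\lambda\, w - 2 w \;\geq\; -C\, w \quad\text{on } X^\circ,
\end{equation*}
with a \emph{constant} $C = C(\lambda)$. Hence $w$ is a classical nonnegative subsolution of $\Delta_\omega w + C w \geq 0$ on the smooth locus.

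\textbf{Step 3: Moser iteration.} Run a Moser iteration for the nonnegative subsolution $w$ on $(\hat X, d_\omega, \omega^n)$, relying on the uniform weak Sobolev inequality of \cite{GPSS1, GPSS2} and the starting $L^2$ integrability from Step 1. Concretely, multiply the differential inequality by $\eta^2 w^{q-1}$ for $q \geq 1$ and cutoff functions $\eta$ compactly supported in $X^\circ$, integrate by parts on the smooth locus, and apply Sobolev to set up a reverse H\"older chain. Letting the cutoff $\eta$ exhaust $\hat X$ by tubular neighborhoods of $\mathcal{S}(X)$ of shrinking $W^{1,2}$-capacity is justified because $\mathcal{S}(X)$ has complex codimension at least one and the potentials of $\omega$ are bounded. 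Iterating then gives $\|w\|_{L^\infty(\hat X)} \leq C'$, i.e.\ $|\nabla u|_\omega \leq L$ pointwise on $X^\circ$. Finally, $u \in C^\infty(X^\circ)$ with $|\nabla u|_\omega \leq L$ extends uniquely to an $L$-Lipschitz function on $(\hat X, d_\omega)$.

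The main obstacle is Step 3: making the Moser iteration rigorous on the singular metric measure space $\hat X$, specifically the handling of test functions across $\mathcal{S}(X)$ with only a weak (non-optimal-exponent) Sobolev inequality available. The crucial structural feature that makes this feasible is that the coefficient in $\Delta_\omega w \geq -Cw$ is a \emph{constant}, not an integrally bounded function such as $|\ric(\omega)_-|$ (for which standard integral-Ricci iterations would require $L^p$ with $p > n$, far beyond our $L^2$ hypothesis). This explains the division of labor between the two hypotheses of the proposition: the pointwise Ricci lower bound from (1) is essential in Step 2 for the constant-coefficient Bochner subsolution, while the regularization with $L^2$ Ricci control from (2) is used exclusively in Step 1 to furnish the initial $W^{1,4}$ integrability of $u$ via Theorem \ref{thm:main1}.
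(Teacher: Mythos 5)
Your strategy is genuinely different from the paper's. The paper does not run Moser iteration on a Bochner subsolution; instead it first proves an integral inequality $|\nabla u|(x) \leq C + (\lambda+1)\int_X G(x,\cdot)|\nabla u|\,\omega^n$ (Lemma \ref{lem 3-8}), using the Bochner inequality $\Delta_\omega|\nabla u| \geq -(\lambda+1)|\nabla u|$ together with the $W^{2,2}$ and $W^{1,4}$ bounds from Theorem \ref{thm:main1} to control the cutoff errors, and then bootstraps via Riesz--Thorin interpolation on the operator $\mathbf{T}(f)(x)=\int G(x,\cdot)f\,\omega^n$. Your plan would be a reasonable alternative in a smooth setting, but as written it has two gaps.

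The first gap is in Step 1. You invoke ``spectral convergence under the regularization'' to produce eigenfunctions $u_j$ of $\Delta_{\omega_j}$ converging to $\pi^*u$ in $C^2_{\mathrm{loc}}$. No such spectral convergence has been established at this point of the argument, and trying to get it from RCD or Mosco convergence of the approximating spaces would be circular: the Lipschitz regularity of eigenfunctions you are proving is precisely what is used (via \cite{Ho}) to show $(\hat X,d_\omega,\omega^n)$ is RCD. The paper avoids this by working instead with the \emph{Poisson} problem: one cuts off the eigenfunction to $u_k=u\rho_k$, defines $v_{j,k}(x)=\int_Y G_j(x,y)u_k(y)\,\omega_j^n(y)$ on the regularization so that $\Delta_{\omega_j}v_{j,k}=-u_k$, applies Proposition \ref{linest1} to $v_{j,k}$, and passes to limits (Lemmas \ref{lem 3-4}--\ref{cor 1.1}). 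Replacing your Step 1 by this device recovers the needed $W^{2,2}$ and $W^{1,4}$ bounds for $u$ without any spectral convergence claim.

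The second gap is in Step 3. In the Moser chain you test $\Delta_\omega w+Cw\geq 0$ against $\eta^2 w^{q-1}$ with $q$ growing, and the cutoff error term is of the form $\int_X w^{q}|\nabla\eta_k|^2\,\omega^n$. The capacity statement you cite only gives $\int_X|\nabla\eta_k|^2\,\omega^n\to 0$; since at stage $q$ you only know $w\in L^q$, H\"older does not make $\int w^q|\nabla\eta_k|^2$ small, and $w$ could a priori concentrate on the shrinking tubular neighborhoods of $\mathcal S(X)$. One would need an additional device (e.g.\ truncating $w$ at level $M$ in the test function, then letting $M\to\infty$, or first establishing $w^{q/2}\in W^{1,2}(\hat X)$ independently). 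This is not a fatal objection, but it is exactly the kind of difficulty the paper's Green's-function iteration is designed to avoid: there the only cutoff argument appears once, in Lemma \ref{lem 3-8}, with integrability exponents fixed at 2 and 4 and controlled by Corollary \ref{cor 1.1} and Proposition \ref{prop 2-3}; the subsequent bootstrap from $L^2$ to $L^\infty$ is purely an operator-norm interpolation and involves no further cutoffs. (A minor point: in Step 2 the Bochner term should read $-2\lambda w$ rather than $+2\lambda w$ since $\Delta_\omega u = -\lambda u$; the conclusion $\Delta_\omega w\geq -Cw$ is unaffected.)
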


We begin our proof for Proposition \ref{thm:lip2} with the following lemma.

\begin{lemma} \label{catop}  Suppose $(Y, \{\omega_j\}_{j=1}^\infty)$ satisfy  (\ref{cls2}),  (\ref{nashb2}). If there exists $B>0$ such that 
\begin{equation}\label{rb2}
\mathcal{C}_a(\omega_j) =\frac{1}{V_{\omega_j}} \int_Y (\scR(\omega_j))^2 \omega_j^n \leq B,  
\end{equation}
then there exists $B'>0$ such that 
$$\int_Y |\ric(\omega_j)|^2 \omega_j^n \leq B.$$

\end{lemma}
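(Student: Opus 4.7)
My approach is to reduce the desired uniform $L^2$-bound on $\ric(\omega_j)$ to the hypothesized Calabi energy bound plus a topological intersection number that is manifestly stable under the regularization \eqref{cls2}. The algebraic input is the pointwise identity
\[
|\ric(\omega_j)|^2_{\omega_j} = \scR(\omega_j)^2 - 2\sigma_2(\ric(\omega_j)),
\]
where $\sigma_2$ denotes the second elementary symmetric function in the Ricci eigenvalues with respect to $\omega_j$. Since $\sigma_2(\ric(\omega_j))\,\omega_j^n$ is proportional to $\ric(\omega_j)\wedge\ric(\omega_j)\wedge\omega_j^{n-2}$ (a purely linear-algebraic fact on a K\"ahler $n$-fold), integrating gives
\begin{equation*}
\int_Y |\ric(\omega_j)|^2_{\omega_j}\,\omega_j^n = \int_Y \scR(\omega_j)^2\,\omega_j^n - c_n\int_Y \ric(\omega_j)\wedge\ric(\omega_j)\wedge\omega_j^{n-2}
\end{equation*}
for an explicit dimensional constant $c_n$; this is the analog on $Y$ of the identity \eqref{cal2ric} already invoked in Section 2.

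The key observation is that the last integral is purely cohomological: because $\ric(\omega_j)$ represents the fixed class $-c_1(K_Y)\in H^{1,1}(Y,\mathbb{R})$, one has
\[
\int_Y \ric(\omega_j)\wedge\ric(\omega_j)\wedge\omega_j^{n-2} = [K_Y]^2\cdot[\omega_j]^{n-2}.
\]
By \eqref{cls2}, $[\omega_j]=[\pi^*\omega]+\epsilon_j\alpha$ with $\epsilon_j\to 0^+$, so $[\omega_j]^{n-2}$ expands as a polynomial in $\epsilon_j$ with coefficients given by fixed intersection numbers on $Y$; in particular the topological term is uniformly bounded in $j$. Similarly $V_{\omega_j}=[\omega_j]^n\to[\pi^*\omega]^n = V_\omega>0$, so $V_{\omega_j}$ is uniformly bounded above and bounded away from zero. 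Combining these observations with the hypothesis \eqref{rb2}, the first integral is at most $V_{\omega_j}B\le V_* B$ for $V_*:=\sup_j V_{\omega_j}<\infty$, and the full integral $\int_Y|\ric(\omega_j)|^2_{\omega_j}\omega_j^n$ is then uniformly bounded by a constant $B'$ depending only on $Y$, $[\pi^*\omega]$, $\alpha$, and $B$.

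I do not foresee a substantive obstacle here: the argument is algebraic and cohomological, with no PDE or pluripotential input beyond what is packaged into the hypotheses. The only point requiring attention is the bookkeeping for the normalization factor $V_{\omega_j}$, which is immediate from \eqref{cls2}. I note in passing that the Nash entropy hypothesis \eqref{nashb2} is not used in this lemma; it enters only in the subsequent applications of Theorem \ref{thm:main1} to the regularized family $\{\omega_j\}$.
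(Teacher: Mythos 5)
Your proposal is correct and follows essentially the same route as the paper: you integrate the pointwise identity $|\ric|^2_{\omega_j}=\scR(\omega_j)^2 - 2\sigma_2(\ric(\omega_j))$ to turn the mixed term $\int_Y\ric(\omega_j)\wedge\ric(\omega_j)\wedge\omega_j^{n-2}$ into the fixed intersection number $[K_Y]^2\cdot[\omega_j]^{n-2}$, which is uniformly bounded because $[\omega_j]=[\pi^*\omega]+\epsilon_j\alpha$ with $\epsilon_j\to 0^+$, exactly as in \eqref{cal2ric}. Your extra remarks — tracking the dimensional constant $c_n$ explicitly, noting $V_{\omega_j}$ is bounded above and away from zero so the normalized Calabi bound converts to an unnormalized one, and observing that \eqref{nashb2} is unused here — are all correct but do not change the substance of the argument.
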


\begin{proof}  By (\ref{cal2ric}), we have 
\begin{eqnarray*}
\int_Y |\ric(\omega_j)|^2 \omega_j^n &~=&~ \int (\scR(\omega_j))^2 \omega^n - \int_X \ric(\omega_j)\wedge \ric(\omega_j)\wedge \omega_j^{n-2} \\
&~=&~ \int (\scR(\omega_j))^2 \omega^n - [K_Y]^2\cdot [\omega_j]^{n-2} \leq \int (\scR(\omega_j))^2 \omega^n + C 
\end{eqnarray*}
for some uniform $C>0$. \end{proof}

  Suppose $u \in W^{1,2}(X, \omega)$ is an eigenfunction of $\Delta_\omega$ satisfying 
$$\Delta_\omega u = - \lambda u $$
for some  $\lambda >0$. The following estimates are proved in \cite{GPSS2} (c.f. Lemma 11.2 in \cite{GPSS2}).

\begin{lemma} $u\in L^\infty(X) \cap C^\infty(X^\circ)$.

\end{lemma}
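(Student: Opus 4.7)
The plan is to treat the two assertions separately: the global $L^\infty$ bound on all of $X$ via a Moser iteration on the eigenvalue equation, and the interior smoothness on the regular locus $X^\circ$ via classical elliptic regularity on the open K\"ahler manifold $X^\circ$.

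For the $L^\infty$ bound I would run a standard Moser iteration. Testing $\Delta_\omega u = -\lambda u$ against $|u|^{2q-2}u$ (after truncating a neighborhood of the singular set $\mathcal{S}(X)$ with a suitable cutoff and passing to the limit, using that $\mathcal{S}(X)$ is negligible for $W^{1,2}$--capacity in this setting) and integrating by parts gives the identity
\[
(2q-1)\int_X |u|^{2q-2}|\nabla u|_\omega^2\,\omega^n \;=\; \lambda \int_X |u|^{2q}\,\omega^n.
\]
Since $|\nabla(|u|^q)|_\omega^2 = q^2 |u|^{2q-2}|\nabla u|_\omega^2$, combining this with the uniform Sobolev inequality with some exponent $\kappa>1$ available for metrics in $\mathcal{V}(X,\theta_X,n,A,p,K)$ from \cite{GPSS1,GPSS2} yields the recursive improvement
\[
\|u\|_{L^{2q\kappa}(X,\omega)} \;\leq\; (Cq\lambda)^{1/(2q)}\,\|u\|_{L^{2q}(X,\omega)}.
\]
Iterating along $q_k = \kappa^k$ starting from the finite $L^2$ norm $\|u\|_{L^2(X,\omega)}$ (controlled since $u\in W^{1,2}(X,\omega)$) and summing the resulting geometric series produces $\|u\|_{L^\infty(X)} \leq C(\lambda)\|u\|_{L^2(X,\omega)} < \infty$.

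For smoothness on $X^\circ$, the K\"ahler metric $\omega$ is smooth on the regular locus, so $\Delta_\omega u = -\lambda u$ is a linear elliptic PDE with smooth coefficients on the open K\"ahler manifold $X^\circ$. A standard local bootstrap---first Calder\'on--Zygmund $W^{2,p}_{\mathrm{loc}}$ estimates for every $p<\infty$, then iterated Schauder theory---upgrades $u$ from $W^{1,2}_{\mathrm{loc}}\cap L^\infty$ to $C^\infty(X^\circ)$, which completes the proof.

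The only delicate point, and the step I would expect to require the most care, is the cutoff argument legitimizing $|u|^{2q-2}u$ as a test function near $\mathcal{S}(X)$: one must verify that the boundary error from the cutoff vanishes in the limit, which is where the uniformity of the Sobolev constant and the negligibility of the singular set for $W^{1,2}$--capacity enter crucially. Since precisely this construction has been carried out in \cite{GPSS2} (cf.~Lemma~11.2 there), the proof reduces to invoking that reference.
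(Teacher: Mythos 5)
Your proposal is correct and, like the paper, ultimately reduces to citing \cite{GPSS2} Lemma~11.2 — the paper offers no independent proof of this lemma, simply pointing to that reference — so your Moser-iteration sketch in fact supplies more explicit detail than the paper does.

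One remark on the step you rightly flag as delicate. The mechanism that makes the cutoff error $\int_X |u|^{2q}|\nabla\rho_k|^2_\omega\,\omega^n$ vanish is not the uniformity of the Sobolev constant but a double truncation: replacing $u$ by $u_M=\min(\max(u,-M),M)$ bounds the error by $M^{2q}\|\nabla\rho_k\|_{L^2(\omega)}^2$, which tends to zero by the capacity-zero property of $\mathcal{S}(X)$ in Lemma~\ref{cutoff}, and one only sends $M\to\infty$ after closing the recursion at each level $q$. Without truncating $u$, the error need not vanish merely because $\|\nabla\rho_k\|_{L^2}\to 0$, since $u$ is not yet known to be bounded. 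With that inserted, and noting that a non-optimal Sobolev inequality with some $\kappa>1$ is indeed available from \cite{GPSS1}, the iteration and the elliptic bootstrap on $X^\circ$ (where $\omega$ is assumed $C^\infty$ in this section, not merely $C^2$) go through as you describe.
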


Since the singular set $\cS(X)$ is an analytic subvariety of $X$,   the following result of J. Sturm  \cite{St}  holds due to the uniform $L^\infty$-bound for the K\"ahler potentials of $\omega$ and $\omega_j$. 

 \begin{lemma} \label{cutoff} $\cS(X)$ has capacity $0$ with respect to $\omega$ and $\omega_j$ for all $j$. More precisely, there exist a sequence of smooth cut-off functions $\{\rho_k\}_{k=1}^\infty$ satisfying the following conditions.
 
 \begin{enumerate}
 
 \item There exist a sequence of relatively compact domains $\{ \Omega_k \}_{k=1}^\infty$ of $X^\circ$ with 
 $$\Omega_1\subset\subset \Omega_2 \subset\subset ... \subset\subset \Omega_k ... \rightarrow X^\circ$$   and
 $$\rho_k|_{\Omega_k} = 1, ~ {\rm supp} \rho_k \subset\subset X^\circ,~ 0\leq \rho \leq 1. $$
 
 \item $$\lim_{k \rightarrow \infty} \int_X |\nabla \rho_k|_{\omega} ^2 \omega^n = \lim_{k \rightarrow \infty} \sup_{j\geq 1} \int_X |\nabla \rho_k|_{\omega_j} ^2 \omega_j^n = 0. $$

 \end{enumerate}

 \end{lemma}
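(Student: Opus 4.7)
The plan is to construct $\rho_k$ as a composition of a fixed global quasi-plurisubharmonic function on $Y$ with log-poles along the exceptional set and a carefully scaled cutoff, and then to bound the gradients by Bedford--Taylor integration by parts controlled through cohomology. After possibly further blowing up, we may assume $E := \pi^{-1}(\cS(X))$ is a simple normal crossing divisor on $Y$. Choosing defining sections $s_\ell$ of the irreducible components $E_\ell$ with smooth Hermitian metrics $h_\ell$, the function
\[
\psi := \sum_\ell \log |s_\ell|^2_{h_\ell} - C,
\]
normalized so that $\psi \le -1$, is smooth on $Y \setminus E$, tends to $-\infty$ along $E$, and satisfies $i\partial\bar\partial \psi \ge -C_0 \beta$ as currents for some fixed smooth K\"ahler form $\beta$ on $Y$ and constant $C_0>0$. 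Via the biholomorphism $\pi: Y\setminus E \to X^\circ$ we also regard $\psi$ as a smooth function on $X^\circ$ tending to $-\infty$ along $\cS(X)$.

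Fix a smooth nondecreasing $\chi_k:\mathbb{R}\to[0,1]$ with $\chi_k\equiv 0$ on $(-\infty,-k^2]$, $\chi_k\equiv 1$ on $[-k,+\infty)$, and $|\chi_k'|\le 2/(k^2-k)$. Set
\[
\rho_k := \chi_k(\psi), \qquad \Omega_k := \{\psi > -k\}.
\]
Since $\psi$ is finite on $X^\circ$, the sets $\Omega_k$ are relatively compact in $X^\circ$, satisfy $\Omega_k \subset \Omega_{k+1}$, and exhaust $X^\circ$. The function $\rho_k$ is smooth on $X^\circ$, takes values in $[0,1]$, equals $1$ on $\Omega_k$, and is supported in $\{\psi \ge -k^2\}\subset\subset X^\circ$, establishing condition $(1)$.

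For condition $(2)$, introduce the bounded Lipschitz truncation
\[
\xi_k := \max(\psi,-k^2) - \max(\psi,-k) \in [-(k^2-k),\,0],
\]
so that $|\nabla \xi_k|_g^2 = |\nabla \psi|_g^2\cdot \mathbf{1}_{\{-k^2<\psi<-k\}}$ for any smooth K\"ahler metric $g$. Consequently, for $g\in\{\omega,\omega_j\}$,
\[
\int_X |\nabla \rho_k|_g^2\, g^n \;\le\; \frac{4}{(k^2-k)^2}\int |\nabla\xi_k|_g^2\, g^n.
\]
Writing $u_c := \max(\psi,-c)$, Bedford--Taylor theory provides closed positive $(1,1)$-currents $T_c := i\partial\bar\partial u_c + C_0\beta \ge 0$ on $Y$ with $i\partial\bar\partial \xi_k = T_{k^2} - T_k$. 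Integration by parts on the closed manifold $Y$ against the closed form $g^{n-1}$ gives
\[
\int i\partial\xi_k \wedge \bar\partial \xi_k \wedge g^{n-1}
= -\int \xi_k\,(T_{k^2}-T_k)\wedge g^{n-1}
\le (k^2-k)\int T_{k^2}\wedge g^{n-1},
\]
where the contribution of $T_k$ is discarded using $\xi_k\le 0$ and $T_k\ge 0$. The total mass on the right is purely cohomological, equal to $C_0[\beta]\cdot [g]^{n-1}$, which is uniformly bounded in $k$ and $j$ since $[\omega_j] = [\pi^*\omega]+\epsilon_j \alpha$ is a convergent sequence of classes. Therefore $\int_X|\nabla\rho_k|_g^2\, g^n \le C/k^2 \to 0$, uniformly in $j$.

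The main obstacle is regularity: $\xi_k$ is only Lipschitz, and on $X$ the metric $\omega$ is singular along $\cS(X)$ (equivalently, $\pi^*\omega$ is merely a closed positive current on $Y$ with bounded but nonsmooth potential), so both the integration by parts on $Y$ and the cohomological identification of total mass require justification. This is handled by standard approximation: smooth $\chi_k$ slightly so that $\xi_k$ becomes $C^2$, regularize $\pi^*\omega$ by Demailly-type smooth K\"ahler forms whose potentials are uniformly $L^\infty$-bounded via the Nash entropy hypothesis and Ko\l odziej-type estimates (as emphasized in the statement's hypothesis on uniform $L^\infty$-bounds for K\"ahler potentials), run the Stokes argument in the smooth setting, and pass to the limit using Bedford--Taylor continuity of wedge products of bounded quasi-plurisubharmonic functions against positive currents of bounded potential.
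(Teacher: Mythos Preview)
Your argument is correct and is, in essence, the standard ``Sturm cut-off'' construction that the paper invokes but does not write out: the paper's entire proof is the sentence preceding the lemma, which cites J.~Sturm's private notes and points to the uniform $L^\infty$-bound on the K\"ahler potentials of $\omega$ and $\omega_j$ as the relevant input. Your proposal supplies the details behind that citation: build a quasi-psh function $\psi$ with logarithmic poles along the exceptional set, set $\rho_k=\chi_k(\psi)$, and control $\int i\partial\xi_k\wedge\bar\partial\xi_k\wedge g^{n-1}$ via Bedford--Taylor integration by parts, reducing the bound to the cohomological quantity $C_0[\beta]\cdot[g]^{n-1}$, which is uniform in $j$ because $[\omega_j]=[\pi^*\omega]+\epsilon_j\alpha$ is a convergent family of classes. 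This is exactly where the bounded-potential hypothesis enters, matching the paper's one-line justification.

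One small remark: the preliminary ``after possibly further blowing up'' to make $E$ simple normal crossing is unnecessary and slightly awkward, since the $\omega_j$ live on $Y$ and would only pull back to semipositive (not K\"ahler) forms on the further blowup. It is cleaner to stay on $Y$ and take $\psi=\log|s_D|^2_{h_D}$ for any effective divisor $D$ whose support equals $\pi^{-1}(\cS(X))$; the quasi-psh inequality $i\partial\bar\partial\psi\ge -C_0\beta$ and the rest of your argument go through verbatim. Even with the extra blowup the computation survives (the pulled-back $\omega_j$ still have bounded local potentials, so Bedford--Taylor applies), so this is a cosmetic point rather than a gap.
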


Let $G=G(\cdot, \cdot)$ be Green's function of $(X,\omega)$ constructed as in \cite{GPSS2}. It satisfies the following estimates. 
\begin{lemma}
\label{lemma green limit}
There exist  $q = q(n,p)>1$ and $C= C(X,\theta_X, n,A, p, K)>0$ such that 
\begin{equation}\label{eqn:gr1}
\sup_{x\in X^\circ} \int_X  |G(x,\cdot)|^q \omega^n \le C,\quad \inf_{(x,y)\in X^\circ \times X^\circ } G(x,y)\ge -C.
\end{equation}
Moreover, Green's formula holds for any smooth function with compact support in $X^\circ$, i.e., for any $u\in C^\infty(X^\circ)$ with ${\rm supp} u \subset\subset X^\circ$, we have for any $x\in X^\circ$
\begin{equation}\label{eqn:gr2}
u(x) = \frac{1}{V_\omega} \int_X u \omega^n - \int_X G(x,\cdot) \Delta_\omega u \omega^n.
\end{equation}
\end{lemma}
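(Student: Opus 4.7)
The plan is to derive the two assertions from complementary sources: the uniform bounds in (\ref{eqn:gr1}) are essentially inherited from the Green's function estimates of \cite{GPSS1, GPSS2}, while the Green's formula (\ref{eqn:gr2}) is a consequence of classical integration by parts adapted to the fact that $u$ is compactly supported in the smooth locus $X^\circ$.

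For (\ref{eqn:gr1}), $G$ is constructed in \cite{GPSS2} as a limit of smooth Green's functions $G_j$ on $(Y, \omega_j)$ for a smoothing $\omega_j$ of $\omega$ obeying the Nash entropy bound (for instance the regularization introduced at the start of this section). The uniform $L^q$-bound and uniform lower bound of \cite{GPSS1} apply to each $G_j$ with constants depending only on $X, \theta_X, n, A, p, K$, and these inequalities pass to the $L^q_{\rm loc}(\pi^{-1}(X^\circ))$ limit. Hence (\ref{eqn:gr1}) is immediate.

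For (\ref{eqn:gr2}), fix $x \in X^\circ$ and $u \in C^\infty(X^\circ)$ with ${\rm supp}\, u \subset\subset X^\circ$. Because $\omega$ is smooth on $X^\circ$, standard elliptic regularity applied to $\Delta_\omega G(x,\cdot) = -1/V_\omega$ on $X^\circ \setminus \{x\}$ shows that $G(x,\cdot) \in C^\infty(X^\circ \setminus \{x\})$ with the standard singular behavior $G(x,y) \sim -c_n d_\omega(x,y)^{2-2n}$ near $x$. Apply Stokes' theorem on $X^\circ \setminus \overline{B}_\omega(x, \epsilon)$; since $u$ and its derivatives vanish outside a compact subset of $X^\circ$, no boundary contribution from $\cS(X)$ appears, and we obtain
\begin{equation*}
\int_{X^\circ \setminus B_\omega(x, \epsilon)} \bigl( G \, \Delta_\omega u - u \, \Delta_\omega G \bigr) \omega^n = \int_{\partial B_\omega(x, \epsilon)} \bigl( G \, \partial_\nu u - u \, \partial_\nu G \bigr) d\sigma_\omega.
\end{equation*}
Using $\Delta_\omega G = -1/V_\omega$ off $x$, the left-hand side becomes $\int_X G \, \Delta_\omega u \, \omega^n + \frac{1}{V_\omega} \int_X u \, \omega^n$ in the limit $\epsilon \to 0$, where the $L^q$-integrability of $G(x,\cdot)$ from (\ref{eqn:gr1}) controls the region near $x$. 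On the right, the $G\,\partial_\nu u$ piece vanishes as $\epsilon\to 0$ by local $L^1$-integrability of $G$ at $x$, while the $u\,\partial_\nu G$ piece tends to $u(x)$ via the standard leading-order expansion. Rearranging yields (\ref{eqn:gr2}).

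The main delicacy, rather than an obstacle, is to confirm that the \emph{global} object $G(x,\cdot)$ on $\hat X$ coincides on $X^\circ$ with the distributional Green's function solving $\Delta_\omega G(x,\cdot) = \delta_x - V_\omega^{-1}$ in the classical sense, so that the local expansion at $x$ is available. This identification is built into the approximation construction in \cite{GPSS2}: on any compact subset of $X^\circ \setminus \{x\}$ the smooth approximants $G_j$ converge smoothly to $G(x,\cdot)$, which is therefore a classical Green's function on the open manifold $X^\circ$. The cut-off sequence $\{\rho_k\}$ of Lemma \ref{cutoff} plays no role in the present statement because $u$ is already supported away from $\cS(X)$; those cut-offs will be needed in subsequent lemmas that integrate by parts against test functions not compactly supported in $X^\circ$.
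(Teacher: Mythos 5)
Your treatment of (\ref{eqn:gr1}) is essentially the paper's: pass the uniform $L^q$ bound and lower bound from the smooth approximants $G_j$ on $(Y,\omega_j)$ via $C^3_{\rm loc}$ convergence on $X^\circ\setminus\{x\}$ and Fatou's lemma. No issues there.

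For (\ref{eqn:gr2}) you take a genuinely different route, and it has a real gap. You perform Stokes' theorem on $X^\circ\setminus B_\omega(x,\epsilon)$ and send $\epsilon\to 0$, which requires knowing the precise pole structure of $G(x,\cdot)$ at $x$, namely $G(x,y)\sim c_n\,d_\omega(x,y)^{2-2n}$ with the \emph{correct} normalization constant so that $\int_{\partial B_\omega(x,\epsilon)} u\,\partial_\nu G\,d\sigma\to u(x)$. But "elliptic regularity applied to $\Delta_\omega G(x,\cdot)=-1/V_\omega$ on $X^\circ\setminus\{x\}$" only yields smoothness away from $x$; it says nothing about the nature of the singularity at $x$. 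Similarly, the $C^3_{\rm loc}$ convergence of $G_j$ holds on compact subsets of $X^\circ\setminus\{x\}$, which also leaves the germ at $x$ uncontrolled. To pin down the coefficient of the leading singularity one must verify that $G(x,\cdot)$ satisfies $\Delta_\omega G(x,\cdot)=\delta_x-V_\omega^{-1}$ as distributions on a neighborhood of $x$, i.e., one must test against $\phi$ with $\phi(x)\ne 0$ and pass the identity from $G_j$ to $G$. Doing that carefully — estimating $\big|\int G_j\,\Delta_{\omega_j}\phi\,\omega_j^n-\int G\,\Delta_\omega\phi\,\omega^n\big|$ using the uniform $L^q$ bound near $x$ and uniform convergence away from $x$ — is exactly the computation the paper performs directly with $\phi=u$; your ball-excision step then becomes redundant. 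So the argument either has a gap (the local expansion is asserted, not proved) or, once repaired, collapses into the paper's limiting argument applied to Green's formula on the smooth compact $(Y,\omega_j)$. The paper's route avoids the pole analysis entirely, which is why it is both shorter and more robust in the singular setting.
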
 
\begin{proof} We fix a point $x\in X^\circ$ and view Green's function $G(x,y)$ as a function of $y$. 
It follows from the main theorem of \cite{GPSS2} that for some uniform constants $q, C>0$, these estimates in \eqref{eqn:gr1} hold uniformly for Green's functions $G_{j}(x,\cdot)$ on $(Y, \omega_j)$. Since $\omega_j$ converges in $C^2_{loc}(X^\circ)$ to $\omega$, by elliptic regularity we see that $G_{j}(x,\cdot)$ converges uniformly in $C^3$ on any compact subset $K\subset\subset X^\circ\backslash \{x\}$ to Green's function $G(x,\cdot)$. Then the estimates in  \eqref{eqn:gr1} follow from Fatou's lemma. 

To verify \eqref{eqn:gr2}, we identify $u$ with its pullback $\pi^*u$ on $Y$, and $x$ with $\pi^{-1}(x)$. We apply Green's formula
$$u(x) = \frac{1}{V_{\omega_j}} \int_Y u \omega_j^n - \int_Y G_{j} (x,\cdot) \Delta_{\omega_j} u \omega_j^n, $$
where $V_{\omega_j} = [\omega_j]^n$. Passing to the limit, the first integral on the RHS converges to the first integral in \eqref{eqn:gr2}. For the second integral, we calculate (denote $\Omega\subset X^\circ$ to be the compact support of $u$)
\begin{align*}
& ~\Big| \int_Y G_{j} (x,\cdot) \Delta_{\omega_j} u\, \omega_j^n - \int_X G(x,\cdot) \Delta_\omega u\,\omega^n    \Big|\\
 \le & ~  \Big| \int_\Omega G_{j} (x,\cdot) \xk{ \Delta_{\omega_j} u\, \omega_j^n - \Delta_\omega u\,\omega^n   } \Big| + \Big| \int_\Omega \xk{ G_{j} (x,\cdot) -  G(x,\cdot)} \Delta_\omega u\,\omega^n    \Big|.
\end{align*}
The first term tends to $0$ as $j\to\infty$ since $\Delta_{\omega_j} u\, \omega_j^n \to \Delta_\omega u\, \omega^n$ on $\Omega$ while the $L^1$-norm of $G_{j}(x,\cdot)$ is uniformly bounded. For the second term, we choose a small neighborhood $U$ of $x$ whose $\omega^n$-volume is very small. Outside of $U$, $G_{j}(x,\cdot)$ converges uniformly to $G(x,\cdot)$. Hence this term can be made arbitrarily small by taking $j$ large enough and invoking the H\"older's inequality $\| G_{j} \|_{L^1(U,\omega^n)} \le \| G_{j} \|_{L^q(U,\omega^n)} \xk{ \mathrm{Vol}(U,\omega_j^n)  }^{1/q}$ and the uniform $L^q$ bound of $G_{j}(x,\cdot)$. From this, Green's formula \eqref{eqn:gr2} follows easily. 
\end{proof}
Due to the lower bound of $G(x,\cdot) $ in \eqref{eqn:gr1}, we may add a constant to $G(x,\cdot)$ and  assume 
$$G(x, \cdot) \geq 1. $$
  We can further establish the following uniform estimate for a weighted $L^2$ bound on the gradient of $G(x,\cdot)$: for any $\delta>0$, there exists a uniform constant $C_\delta>0$ such that
\begin{equation}\label{eqn:gr3}
\sup_{x\in X^\circ} \int_X \frac{ |\nabla G(x,\cdot)|^2   }{G(x,\cdot)^{1+\delta}} \omega^n \le C_\delta. 
\end{equation} 
The proof of \eqref{eqn:gr3} is similar to that of \eqref{eqn:gr1}, by applying Lemma 5.6 in \cite{GPSS2}, so we omit it. 

We will now apply our estimates to eigenfunctions. 

\begin{lemma}\label{lemma green} Suppose $u\in W^{1,2}(X, \omega)$ is an eigenfunction of $\Delta_\omega$ with eigenvalue $\lambda>0$.  Then   for any $x\in X^\circ$, we have
\begin{equation}\label{eqn:n1.4}
u(x) = -\int_X G(x, \cdot) \Delta u \omega^n = \lambda \int_X G(x,y) u(y) \omega^n(y) . 
\end{equation}
\end{lemma}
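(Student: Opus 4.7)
The plan is to apply the Green's formula \eqref{eqn:gr2} to the truncated function $\rho_k u$, which has compact support in $X^\circ$, and then pass to the limit $k\to\infty$ using the capacity-zero cutoffs from Lemma \ref{cutoff}. A preliminary step is to verify that $\int_X u\,\omega^n = 0$: pairing $\Delta u = -\lambda u$ with $\rho_k$ and integrating by parts on $X^\circ$ yields
\[
-\lambda \int_X \rho_k u\,\omega^n \;=\; \int_X \rho_k \Delta u\,\omega^n \;=\; -\int_X \langle\nabla\rho_k,\nabla u\rangle_\omega\,\omega^n.
\]
Cauchy--Schwarz, together with the $W^{1,2}$-bound on $u$ and $\int_X|\nabla\rho_k|_\omega^2\,\omega^n\to 0$, forces the right-hand side to zero, while dominated convergence (using $u\in L^\infty$) shows the left-hand side tends to $-\lambda\int_X u\,\omega^n$. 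Hence $\int_X u\,\omega^n=0$ since $\lambda>0$.

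Next, for fixed $x\in X^\circ$, choose $r_0>0$ with $\overline{B_\omega(x,r_0)}\subset X^\circ$ and $k_0$ large enough that $\rho_k\equiv 1$ on $B_\omega(x,r_0)$ for all $k\ge k_0$. Then $\mathrm{supp}\,\nabla\rho_k\subset X\setminus B_\omega(x,r_0)$, and the pointwise estimate \eqref{eqn:n1.7} gives $G(x,\cdot)\le C_x$ on $\mathrm{supp}\,\nabla\rho_k$. Applying \eqref{eqn:gr2} to the smooth compactly supported function $\rho_k u$ and using $(\rho_k u)(x)=u(x)$ for $k\ge k_0$ produces
\[
u(x) \;=\; \frac{1}{V_\omega}\int_X \rho_k u\,\omega^n \;-\; \int_X G(x,\cdot)\bigl(\rho_k\Delta u + u\Delta\rho_k + 2\langle\nabla\rho_k,\nabla u\rangle_\omega\bigr)\omega^n.
\]
The mean-value term vanishes as $k\to\infty$, and the main term $\int_X G\rho_k\Delta u\,\omega^n$ converges to $-\lambda\int_X G u\,\omega^n$ by dominated convergence, since $G\in L^q$ for some $q>1$ and $\|\Delta u\|_\infty=\lambda\|u\|_\infty<\infty$.

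It remains to show that the two error terms involving $\nabla\rho_k$ and $\Delta\rho_k$ vanish in the limit. The gradient cross-term is controlled directly by Cauchy--Schwarz: it is bounded by $C_x\bigl(\int_X|\nabla\rho_k|^2\omega^n\bigr)^{1/2}\|\nabla u\|_{L^2}\to 0$. For $\int_X Gu\,\Delta\rho_k\,\omega^n$, since $\nabla\rho_k$ vanishes identically in a neighborhood of $x$, integration by parts on $X^\circ$ is valid and gives
\[
-\int_X u\,\langle\nabla G,\nabla\rho_k\rangle_\omega\,\omega^n \;-\; \int_X G\,\langle\nabla u,\nabla\rho_k\rangle_\omega\,\omega^n;
\]
the second integral is handled as in the cross-term, while the first is estimated by Cauchy--Schwarz with the weight $G^{(1+\delta)/2}$, combining the weighted gradient bound \eqref{eqn:gr3} with the pointwise estimate $G\le C_x$ on $\mathrm{supp}\,\nabla\rho_k$. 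The principal technical obstacle lies precisely in this last step: both $G(x,\cdot)$ and $\omega$ carry singularities, but in different locations, and one must control the interaction when the derivative of the cutoff meets the singular locus of $\omega$. The key mechanism is the separation argument together with the pointwise bound \eqref{eqn:n1.7}, which converts the capacity-zero property of Lemma \ref{cutoff} into an $L^\infty$-bound for $G(x,\cdot)$ on $\mathrm{supp}\,\nabla\rho_k$, allowing the error terms to be absorbed.
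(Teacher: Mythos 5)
Your proof is correct and follows essentially the same route as the paper's: apply Green's formula \eqref{eqn:gr2} to the truncation $\rho_k u$, use the fact that $x$ is eventually disjoint from $\mathrm{supp}\,\nabla\rho_k$ to get an $L^\infty$ bound $G(x,\cdot)\le C_x$ there, integrate the $u\,\Delta\rho_k$ term by parts, and absorb the $\langle\nabla G,\nabla\rho_k\rangle$ piece via the weighted gradient estimate \eqref{eqn:gr3} split with weight $G^{(1\pm\delta)/2}$. The only genuine addition is that you explicitly verify $\int_X u\,\omega^n=0$ via the cutoff argument, a point the paper passes over as immediate; this is a small but harmless expansion, not a different approach.
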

\begin{proof} We first apply the standard Green's formula \eqref{eqn:gr2} to the function $u\rho_k$, where $\rho_k$ is the cut-off function constructed in Lemma \ref{cutoff}. Obviously, $u \rho_k $   has compact support in $X^\circ$, and 
\begin{align}\label{eqn:n1.5}
u\rho_k(x) &= -\int_X G(x, \cdot)( \rho_k \Delta u + u \Delta\rho_k + 2 \langle \nabla u , \nabla \rho_k\rangle ) \omega^n + \frac{1}{V_\omega} \int_X u\rho_k \omega^n \\
& =: I_1 + I_2, \nonumber
\end{align}
where $I_1$ and $I_2$ denote the first and second integrals, respectively. 
As $k\to\infty$, the LHS and the first term in $I_1$   of \eqref{eqn:n1.5} converge to the corresponding terms in \eqref{eqn:n1.4}. $I_2$ clearly tends to the integral of $u$ which is zero. We need to check the remaining terms tend to zero. 

For the last term in $I_1$ of \eqref{eqn:n1.5},  $x$ is disjoint from $\mathrm{supp}(\nabla \rho_k)$ when $k$ is sufficiently large. Hence there exists a constant $C_x$ which depends on $x$, such that $|G(x, y)|\le C_x$ by \cite{GPSS1}, for any $y\in \mathrm{supp}(\nabla \rho_k)$. Then this term tends to zero by H\"older's inequality, the $L^2$-bound of $\nabla u$ and Lemma \ref{cutoff}. 

For the second term in $I_1$, we apply integration by parts to obtain 
$$ -\int_X G(x, \cdot)u \Delta\rho_k \omega^n = \int_X  \left(  G(x, \cdot) \innpro{\nabla u, \nabla \rho_k} + u \innpro{ \nabla G(x,\cdot) ,\nabla \rho_k  } \right) \omega^n.$$ 
Then by the same reasoning as in the previous term, we have $$\int_X  G(x, \cdot) \innpro{\nabla u, \nabla \rho_k} \omega^n \leq C \left( \int_X  (G(x, \cdot) )^2 |\nabla \rho_k|^2 \omega^n \right)^{1/2} \rightarrow 0 .$$ 
Moreover, 
$$\left| \int_X u \innpro{ \nabla G(x,\cdot) ,\nabla \rho_k  }\right| \le C \left(\int_X \frac{|\nabla G(x,\cdot)|^2}{G(x,\cdot)^{1+\delta}} \omega^n \right)^{1/2} \left(\int_X G(x,\cdot)^{1+\delta} |\nabla \rho_k|^2 \omega^n\right)^{1/2}\to 0$$
where the first factor on the RHS is bounded by \eqref{eqn:gr3}. This completes the proof of the lemma.
\end{proof}

 Equation \eqref{eqn:n1.4} suggests that to show $u$ is Lipschitz, it suffices to prove that the integral
$$\int_X |\nabla_x G(x,y)|_{\omega_x} u(y) \omega^n(y)$$ is bounded independent of $x\in X^\circ$. However, in general this may not hold without additional conditions on $\omega$. Our strategy is to approximate this integral by the corresponding quantities with respect to $\omega_j$. More precisely, we choose cut-off functions $\rho_k$ as in Lemma \ref{cutoff} and approximate $u$ by letting 
$$u_k := u \rho_k.$$ 
$u_k$ can also seen as defined on $Y$ with compact support in  $\pi^{-1}(X^\circ)$.  We further define 
\begin{equation}\label{eqn:n1.6}
\hat v_{j,k}(x) = \int_Y G_j(x, y) u_k(y) \omega_j^n(y), 
\end{equation}
where $G_j$ is Green's function of $\omega_j$. 
$\hat v_{j,k}$ is uniformly bounded (independent of $j$ and $k$) in  $L^\infty(Y)$  as $u$ is bounded in $L^\infty(X)$. By \cite{GPSS1, GPSS2}, there exist $b>2n-2$ and $C_b>0$ such that 
$$
1\le G_j(x,y) \le \frac{C_b}{d_j(x,y)^{b}},~ {\rm for ~all~} x\neq y \in Y,
$$
where $d_j$ is the distance function induced by  $\omega_j$. Since we are interested in the gradient estimate of $\hat v_{j,k}$, we will  add a bounded constant $\Lambda>0$ to $\hat v_{j,k}$ so that  $v_{j,k} := \hat v_{j,k} + \Lambda$ satisfies
$$1\le v_{j,k} \le C $$
 for a uniform constant $C>0$ due to the uniform $L^\infty$ bound for $u_k$. It is clear that for each $j$ we have
\begin{equation}\label{eqnvjk}
 \Delta_{\omega_j} v_{j,k} = - u_k.
\end{equation}
 Hence $|\Delta_{\omega_j} v_{j,k}| \le C$ for all $j$ and $k$. Integrating the equation $v_{j,k} \Delta_{\omega_j} v_{j,k} = - u_k v_{j,k}$, we immediately obtain
\begin{equation}\label{eqn:f grad}
\int_Y \abs{\nabla v_{j,k}}_{\omega_j} \omega_j^n \le C.
\end{equation}
Moreover, we observe that from the local $C^2(Y\backslash \pi^{-1}(S))$ convergence of $\omega_j$ to $\pi^*\omega$, elliptic regularity implies that by passing to a subsequence of $j$'s, we have 
\begin{equation}\label{eqn:f limit}v_{j,k} \xrightarrow{C^{3,\alpha}_{loc}(\pi^{-1}(X^\circ))} v_{\infty, k},\end{equation}
In particular, we have 
\begin{equation}\label{eqnvinfk1}
v_{\infty, k}(x) = \int_X G(x,y) u_k(y) \omega^n(y) + \Lambda,
\end{equation}
for any $x\in X\backslash S$, and on this set, 
\begin{equation}\label{eqnvinfk2}
\Delta_\omega v_{\infty,k} = - u_k.
\end{equation}
 Since $v_{\infty, k}$ is bounded, both equation (\ref{eqnvinfk1}) and (\ref{eqnvinfk2}) hold globally on $X$ and $v_{\infty, k} \in W^{1,2}(X, \omega)$. 
  %

Returning to $v_{j, k}$ and equation (\ref{eqnvjk}), we have the following lemma as an immediate corollary of Theorem \ref{linest1}. 
\begin{lemma}\label{lem 3-4}
There exists  $C>0$    such that for all $j$ and $k$, we have  
\begin{equation}\label{eqn:s33}
\int_Y \left(\abs{\nabla \nabla v_{j,k}}_{\omega_j} + \abs{\nabla \overline{\nabla} v_{j,k}}_{\omega_j} ) +   |\nabla v_{j,k}|_{\omega_j}^4 \right) \omega_j^n  \le C.
\end{equation}
\end{lemma}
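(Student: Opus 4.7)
The plan is to reduce the estimate directly to Proposition~\ref{linest1}, viewing the equation $\Delta_{\omega_j} v_{j,k} = -u_k$ on the smooth K\"ahler manifold $(Y,\omega_j)$ as an instance of the Laplace equation \eqref{lapeqn2} with solution $v_{j,k}$ and right-hand side $f = -u_k$, then verifying that all constants entering that proposition can be chosen uniformly in $j$ and $k$.

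First I would check that $\{\omega_j\}_j$ sits inside a single class $\mathcal{V}_B(Y, \theta_Y, n, A', p', K')$ with uniform constants. The cohomology bound $[\omega_j]\cdot [\theta_Y]^{n-1} \leq A'$ follows at once from \eqref{cls2} because $\epsilon_j \to 0^+$; the Nash entropy bound \eqref{nashen0} is precisely \eqref{nashb2}; and the $L^2$-Ricci condition \eqref{l2ric} defining $\mathcal{V}_B$ is exactly \eqref{ricp2}. Next I would record that $\|u_k\|_{L^\infty(Y)} \leq \|u\|_{L^\infty(X)}$ uniformly in $k$, since $u\in L^\infty(X)$ by the preceding lemma and $0\leq \rho_k\leq 1$. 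Finally, inspecting the proof of Proposition~\ref{linest1}, the only role played by the normalization $\int_X u\,\omega^n=0$ (via Lemma~\ref{lemma L infty u}) is to ensure, after a harmless additive shift, that the solution lies in $[1,C]$ uniformly, so that one can safely work with $\phi := u^\beta$ for $\beta = 8/9$. Here $v_{j,k}$ already satisfies $1 \leq v_{j,k} \leq C$ by construction; this is precisely the reason the constant $\Lambda$ was added prior to \eqref{eqnvjk}. Thus the Bochner argument of Proposition~\ref{linest1} applied to $\phi := v_{j,k}^\beta$ on $(Y,\omega_j)$ transcribes verbatim and produces \eqref{eqn:s33} with a constant $C$ depending only on $Y$, $\theta_Y$, $n$, $A'$, $B$, $p'$, $K'$ and $\|u\|_{L^\infty(X)}$.

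There is essentially no genuine obstacle: the lemma is an immediate transfer of Proposition~\ref{linest1} to the regularizing sequence. The single point requiring attention is the replacement of the integral normalization of the solution used in the earlier proof by the a priori bound $1 \leq v_{j,k} \leq C$ built into the construction of $v_{j,k}$; once this is noted, everything else is mechanical. The substantive difficulty in the broader argument lies elsewhere — namely in passing from these uniform estimates on the smooth approximants $v_{j,k}$ to conclusions about the limit $v_{\infty,k}$ and ultimately about the eigenfunction $u$ itself on the singular space.
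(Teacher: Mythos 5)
Your proposal is correct and is exactly the paper's argument: the authors state Lemma~\ref{lem 3-4} as an ``immediate corollary'' of Proposition~\ref{linest1} applied to $\Delta_{\omega_j} v_{j,k} = -u_k$ on $(Y,\omega_j)$, and your checks — uniform membership of $\omega_j$ in $\mathcal{V}_B(Y,\theta_Y,n,A',p',K')$ via \eqref{cls2}, \eqref{nashb2}, \eqref{ricp2}, the uniform $L^\infty$ bound on $u_k$, and the observation that $1\le v_{j,k}\le C$ replaces the normalization step — are precisely what ``immediate'' means here. The only small point you could add is that $V_{\omega_j}=([\pi^*\omega]+\epsilon_j\alpha)^n$ is uniformly bounded above and below away from zero, so dropping the $1/V_{\omega_j}$ normalization from Proposition~\ref{linest1} when passing to \eqref{eqn:s33} is harmless.
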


\begin{corollary}\label{cor 1.1}
There exists   $C>0$ such that 
\begin{equation}\label{eqn:u bound}
\int_X \left(\abs{\nabla \nabla u}_\omega + \abs{\nabla \overline{\nabla} u}_\omega +   |\nabla u|_\omega^4 \right) \omega^n  \le C.
\end{equation}
\end{corollary}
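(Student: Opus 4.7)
The plan is to deduce the estimate for $u$ from the uniform bound of Lemma \ref{lem 3-4} for $v_{j,k}$ by taking two successive limits: first $j \to \infty$ to transfer the bounds from $(Y,\omega_j)$ to $(X,\omega)$ for the limiting function $v_{\infty,k}$, and then $k \to \infty$ to shed the cut-off and recover $u$, which is proportional to $v_{\infty,k} - \Lambda$ in the limit by Lemma \ref{lemma green}.

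For the $j$-limit, fix $k$ and let $\Omega \Subset X^\circ$. The $C^2_{loc}$-convergence $\omega_j \to \pi^*\omega$ on $\pi^{-1}(X^\circ)$ combined with the $C^{3,\alpha}_{loc}$-convergence $v_{j,k} \to v_{\infty,k}$ of \eqref{eqn:f limit} implies that the integrand $\bigl(\abs{\nabla\nabla v_{j,k}}_{\omega_j} + \abs{\nabla\overline\nabla v_{j,k}}_{\omega_j} + |\nabla v_{j,k}|^4_{\omega_j}\bigr)\omega_j^n$ converges uniformly on $\pi^{-1}(\Omega)$ to the analogous integrand built from $v_{\infty,k}$ and $\omega$. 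Combined with the $Y$-wide bound of Lemma \ref{lem 3-4} and exhaustion of $X^\circ$ by such $\Omega$'s, the monotone convergence theorem yields
\begin{equation*}
\int_{X^\circ}\bigl(\abs{\nabla\nabla v_{\infty,k}}_\omega + \abs{\nabla\overline\nabla v_{\infty,k}}_\omega + |\nabla v_{\infty,k}|^4_\omega\bigr)\omega^n \leq C
\end{equation*}
with $C$ independent of $k$.

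For the $k$-limit, \eqref{eqnvinfk1}, the bound $\|u_k\|_{L^\infty(X)} \leq \|u\|_{L^\infty(X)}$, the $L^q$-integrability of $G(x,\cdot)$ from \eqref{eqn:gr1}, and Lemma \ref{lemma green} give, by dominated convergence, $v_{\infty,k}(x) \to u(x)/\lambda + \Lambda$ pointwise on $X^\circ$. Since $\Delta_\omega v_{\infty,k} = -u_k \to -u$ locally uniformly on $X^\circ$, where $\omega$ is smooth, Schauder estimates upgrade this to $C^{3,\alpha}_{loc}(X^\circ)$-convergence; in particular $|\nabla v_{\infty,k}|^2_\omega \to \lambda^{-2}|\nabla u|^2_\omega$ pointwise, with analogous statements for the second derivatives. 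A final application of Fatou's lemma to the non-negative integrands in the bound above yields the desired estimate on $X^\circ$, which extends to all of $X$ since $\cS(X) = X \setminus X^\circ$ is an analytic subvariety of positive complex codimension and hence $\omega^n$-null. The main technical point is the $C^{3,\alpha}_{loc}(X^\circ)$ upgrade in the second step near the singular set, but this is handled by standard local Schauder theory applied on a compact exhaustion of the smooth locus together with the uniform $L^\infty$ bound on $v_{\infty,k}$.
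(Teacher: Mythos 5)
Your proposal is correct and follows essentially the same two-step limiting argument as the paper: pass $j\to\infty$ using Lemma \ref{lem 3-4} and the $C^2_{loc}$/$C^{3,\alpha}_{loc}$ convergence, then pass $k\to\infty$ using Lemma \ref{lemma green}, \eqref{eqnvinfk1}--\eqref{eqnvinfk2}, and interior elliptic regularity, invoking Fatou (or the equivalent uniform-convergence-on-compacta plus exhaustion argument you give) at each stage. The only cosmetic difference is that you phrase the $j$-limit via uniform convergence on compact exhaustions plus monotone convergence, whereas the paper applies Fatou's lemma directly; these are interchangeable here since the integrands are nonnegative and converge pointwise on $X^\circ$.
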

\begin{proof}
Passing to the limit as in \eqref{eqn:f limit}, we can apply Fatou's Lemma and Lemma \ref{lem 3-4} to obtain 
\begin{equation}\label{eqn:n1.15}
\int_X \left(\abs{\nabla \nabla v_{\infty, k}}_\omega + \abs{\nabla \overline{\nabla} v_{\infty, k}}_\omega  +   |\nabla v_{\infty, k}|_\omega^4 \right) \omega^n  \le C.
\end{equation}
Recall that as $k\to \infty$ we have 
$$v_{\infty, k} (x) = \int_{X} G(x,y) u_k(y) \omega^n(y) + \Lambda \to \int_{X} G(x,y) u(y) \omega^n(y) + \Lambda = \lambda^{-1} u(x) + \Lambda,$$
and this convergence is locally uniform in $C^3(K, \omega)$ for any compact subset $K\subset X^\circ$ by elliptic regularity of equation (\ref{eqnvinfk2}). Again by Fatou's Lemma, we conclude from \eqref{eqn:n1.15} that 
\begin{equation*}
\int_X \left(\abs{\nabla \nabla u}  + \abs{\nabla \overline{\nabla} u}   +   |\nabla u|^4\right)\omega^n  \le C.
\end{equation*}
This proves the corollary. 
\end{proof}

We now   fix a point $x\in \{\rho_k = 1\}^\circ$ and identify $x$ with its pre-image $\pi^{-1}(x)\in Y$. We can also identify $\pi^* u_k$ and $u_k$ since $u_k$ is compactly supported on $X^\circ$.  We denote
$$U_k = X \setminus \{\rho_k=1\}.$$
For convenience, we can also identify $U_k$ as an open subset in $Y$.

\begin{lemma}\label{lem 3-6}
For any $k_0>0$ and $x\in \{ \rho_{k_0} = 1\}$, there exists  $C=C(k_0, x)>0$   such that for all $k>k_0$ 
\begin{equation*} 
\int_{U_k} \left( \abs{\nabla \nabla G(x,\cdot)}_\omega + \abs{\nabla \nabla G(x,\cdot)}_\omega + |\nabla G(x,\cdot)|_\omega^4  \right) \omega^n \le C.
\end{equation*}
\end{lemma}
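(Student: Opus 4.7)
The plan is to transfer the Green's function Sobolev estimate of Proposition~\ref{prop 2-3}, which holds uniformly along the regularizing sequence $(Y,\omega_j)$, down to the singular limit $(X,\omega)$. The key input is that the $C^2_{loc}$-convergence $\omega_j \to \pi^*\omega$ on $\pi^{-1}(X^\circ)$ allows one to uniformly control $\omega_j$-geodesic distances on compact subsets away from the exceptional divisor, so that $U_k$ sits uniformly inside the complement of an $\omega_j$-geodesic ball around $x$, and Proposition~\ref{prop 2-3} applies with a $j$-independent radius.

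First I would choose a compact neighborhood $K \subset \Omega_{k_0} \subset X^\circ$ of $x$ (the hypothesis $x \in \{\rho_{k_0}=1\}$ is most naturally read as $x$ lying in the interior of some $\Omega_{k_0}$) and identify $K$ with $\pi^{-1}(K) \subset Y$. By monotonicity $\Omega_{k_0} \subset \Omega_k$ for $k \ge k_0$ one has $K \subset \{\rho_k=1\}$, hence $K \cap U_k = \emptyset$ for every such $k$. Because $\omega_j$ is uniformly equivalent to $\pi^*\omega$ on the compact set $K$ for large $j$ (by the $C^2_{loc}$ convergence), there exists $r_0 = r_0(x,k_0)>0$ such that $B_{\omega_j}(x, r_0) \subset K$ for all large $j$, and consequently $U_k \subset Y \setminus B_{\omega_j}(x, r_0)$ for every $k \ge k_0$. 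The hypotheses \eqref{cls2}, \eqref{nashb2}, \eqref{ricp2} place $\{\omega_j\}$ uniformly inside some $\mathcal{V}_B(Y, \pi^*\theta_X, n, A', p', K')$, so applying Proposition~\ref{prop 2-3} to $(Y, \omega_j)$ at $x$ with radius $r_0$, and absorbing the uniformly bounded factor $V_{\omega_j}$ into the constant, yields $C = C(x, k_0) > 0$ with
\[
\int_{U_k} \left( \abs{\nabla \nabla G_j(x,\cdot)}_{\omega_j} + \abs{\nabla \bar\nabla G_j(x,\cdot)}_{\omega_j} + |\nabla G_j(x,\cdot)|^4_{\omega_j} \right) \omega_j^n \le C
\]
for all large $j$ and all $k \ge k_0$.

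Finally I would pass to the limit $j \to \infty$ using the $C^{3,\alpha}_{loc}(\pi^{-1}(X^\circ) \setminus \{x\})$-convergence $G_j(x,\cdot) \to G(x,\cdot)$ coming from elliptic regularity (cf.~the proof of Lemma~\ref{lemma green limit}). On any compact $L \subset U_k \cap \pi^{-1}(X^\circ)$ the integrands converge uniformly together with the volume form, so the $j$-uniform bound above forces $\int_L (\cdots) \omega^n \le C$; exhausting $U_k \cap \pi^{-1}(X^\circ)$ by such compact sets via monotone convergence and noting that the exceptional divisor $\pi^{-1}(\cS(X))$ has $\omega^n$-measure zero completes the proof. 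I expect the main obstacle to be the uniform-in-$j$ separation of $x$ from $U_k$ in the $\omega_j$-metric, since the $\omega_j$-geometry may degenerate near the exceptional divisor; this is exactly what the choice $K \subset \Omega_{k_0}$ together with the $C^2_{loc}$-convergence of $\omega_j$ is designed to overcome. Once this separation is secured, the remaining steps are routine applications of Proposition~\ref{prop 2-3} and a Fatou/monotone-convergence argument.
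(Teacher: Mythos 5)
Your proposal matches the paper's argument: the paper likewise observes that $x$ is separated from $U_k$ in the $\omega_j$-distance uniformly in $j$ for $k\ge k_0$, applies Proposition~\ref{prop 2-3} to $(Y,\omega_j)$, and passes to the limit using the local smooth convergence $G_j(x,\cdot)\to G(x,\cdot)$ (the paper states this in a single line but the content is the same as your expanded version). One small correction: the background metric on $Y$ in which the Nash entropy bound \eqref{nashb2} is phrased is $\theta_Y$, not $\pi^*\theta_X$ (which degenerates along the exceptional divisor and is not K\"ahler on $Y$), so the sequence lives in $\mathcal{V}_B(Y,\theta_Y,n,A',p',K')$; this does not affect your argument.
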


\begin{proof}  Since $x$ is disjoint from $U_k$ for all sufficiently large $k$, there exists $c>0$ such that for all sufficiently large $i, k$, $d_j(x, \partial U_k)> c$. The lemma then follows from Proposition \ref{prop 2-3}. 
 \end{proof}

\begin{lemma}\label{lem 3-8}
There exists $C>0$ such that for any $x\in X^\circ$, we have
$$|\nabla u|_\omega(x) \le C + (\lambda+ 1) \int_X G(x,\cdot) |\nabla u|_\omega \omega^n.$$
\end{lemma}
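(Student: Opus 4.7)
The plan is to derive a Bochner-type inequality $-\Delta\phi_\epsilon \le (\lambda+1)\phi_\epsilon$ for the smoothing $\phi_\epsilon:=\sqrt{|\nabla u|_\omega^2+\epsilon}$ of $|\nabla u|_\omega$ on $X^\circ$, apply the Green representation of Lemma \ref{lemma green limit} to the compactly supported product $\phi_\epsilon\rho_k$ with the cut-offs $\rho_k$ from Lemma \ref{cutoff}, and finally pass to the limit first in $k$ and then in $\epsilon$.

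\emph{Bochner step.} On $X^\circ$ the function $u$ is smooth. Bochner's formula applied to $|\nabla u|_\omega^2$, combined with the eigenvalue equation $\Delta u = -\lambda u$ and $\ric(\omega) \ge -\omega$, gives
\[
\Delta|\nabla u|_\omega^2 \;\ge\; |\nabla\nabla u|_\omega^2 + |\nabla\overline{\nabla} u|_\omega^2 - 2(\lambda+1)|\nabla u|_\omega^2.
\]
Using the pointwise identity $\Delta\phi_\epsilon = \tfrac{1}{2\phi_\epsilon}\Delta|\nabla u|_\omega^2 - \tfrac{1}{4\phi_\epsilon^3}\bigl|\nabla|\nabla u|_\omega^2\bigr|_\omega^2$ together with the K\"ahler Kato inequality $\bigl|\nabla|\nabla u|_\omega^2\bigr|_\omega^2 \le 2|\nabla u|_\omega^2\bigl(|\nabla\nabla u|_\omega^2 + |\nabla\overline{\nabla} u|_\omega^2\bigr)$, the Hessian contributions cancel up to a non-negative remainder of size $\epsilon$. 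After noting $|\nabla u|_\omega^2/\phi_\epsilon \le \phi_\epsilon$, what remains is $-\Delta\phi_\epsilon \le (\lambda+1)\phi_\epsilon$ on $X^\circ$.

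\emph{Green representation and cut-off limit.} Since $\phi_\epsilon\rho_k \in C^\infty_c(X^\circ)$, Lemma \ref{lemma green limit} yields
\[
\phi_\epsilon\rho_k(x) \;=\; \frac{1}{V_\omega}\int_X \phi_\epsilon\rho_k\,\omega^n \;-\; \int_X G(x,\cdot)\,\Delta(\phi_\epsilon\rho_k)\,\omega^n.
\]
Expanding $\Delta(\phi_\epsilon\rho_k)$ by Leibniz, the main term $-\int G\rho_k\Delta\phi_\epsilon\,\omega^n$ is controlled by $(\lambda+1)\int G\rho_k\phi_\epsilon\,\omega^n$ via the Bochner step and $G\ge 0$. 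The remaining cross terms are $-2\int G\langle\nabla\phi_\epsilon,\nabla\rho_k\rangle\,\omega^n$ and $-\int G\phi_\epsilon\Delta\rho_k\,\omega^n$; integrating the latter by parts converts it into $\int\phi_\epsilon\langle\nabla G,\nabla\rho_k\rangle + \int G\langle\nabla\phi_\epsilon,\nabla\rho_k\rangle$. For $k$ large, $\mathrm{supp}(\nabla\rho_k)$ lies at positive $d_\omega$-distance from the fixed point $x\in X^\circ$, so both $G(x,\cdot)$ and $\nabla_yG(x,y)$ are uniformly bounded by some $C_x$ on this support. Combining this with Cauchy--Schwarz, the vanishing $\int_X|\nabla\rho_k|_\omega^2\omega^n \to 0$ from Lemma \ref{cutoff}, and the uniform $L^2(X,\omega^n)$-bounds on $\phi_\epsilon$ and $\nabla\phi_\epsilon$ (the latter via the Kato bound $|\nabla\phi_\epsilon|_\omega^2 \le \tfrac{1}{2}(|\nabla\nabla u|_\omega^2 + |\nabla\overline{\nabla} u|_\omega^2)$ and Corollary \ref{cor 1.1}), all cross terms vanish as $k\to\infty$. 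This yields $\phi_\epsilon(x) \le \overline{\phi_\epsilon} + (\lambda+1)\int_X G(x,\cdot)\phi_\epsilon\,\omega^n$. Monotone convergence as $\epsilon\downarrow 0$ (with $\phi_\epsilon \downarrow |\nabla u|_\omega$ pointwise) then gives the claim, the constant $C$ arising from the $L^4$ bound of Corollary \ref{cor 1.1} via Jensen.

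\emph{Main obstacle.} The delicate step is the rigorous vanishing of the cut-off cross terms: it simultaneously uses the uniform boundedness of $G(x,\cdot)$ and $\nabla_yG(x,y)$ on the retreating support of $\nabla\rho_k$ (which relies on $x$ being an interior smooth point at positive $d_\omega$-distance from $\mathcal S(X)$) and the uniform $L^2$-bound on $\nabla\phi_\epsilon$ inherited from the refined K\"ahler Kato inequality together with Corollary \ref{cor 1.1}. A secondary technical point is that the cancellation producing the coefficient $(\lambda+1)$ in the Bochner inequality for $\phi_\epsilon$ must be uniform in $\epsilon$, which again hinges on the K\"ahler form of Kato's inequality being sharp enough.
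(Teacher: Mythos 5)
Your overall strategy matches the paper's: Bochner plus Kato gives the distributional inequality $\Delta|\nabla u|\geq -(\lambda+1)|\nabla u|$ on $X^\circ$, then Green's formula applied to $\rho_k|\nabla u|$ with the cut-offs of Lemma \ref{cutoff}, then send $k\to\infty$. The $\epsilon$-smoothing $\phi_\epsilon=\sqrt{|\nabla u|^2+\epsilon}$ is a genuine technical improvement over the paper, which applies Green's formula directly to $|\nabla u|$ without commenting on its lack of smoothness at zeros of $\nabla u$; your regularization fixes that cleanly and the Kato cancellation is indeed uniform in $\epsilon$.

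However, there is a real gap in the step that kills the cross term $\int_X\phi_\epsilon\langle\nabla G(x,\cdot),\nabla\rho_k\rangle_\omega\,\omega^n$. You assert that, because $\mathrm{supp}(\nabla\rho_k)$ is at positive $d_\omega$-distance from $x$, the gradient $\nabla_y G(x,y)$ is uniformly bounded by some $C_x$ there. That is not justified and cannot be taken for granted. The sets $\mathrm{supp}(\nabla\rho_k)$ retreat toward the analytic singular set $\mathcal{S}(X)$ as $k\to\infty$, and although $G(x,\cdot)$ is smooth on each $\mathrm{supp}(\nabla\rho_k)$, the local elliptic bounds there blow up as the metric $\omega$ degenerates near $\mathcal{S}(X)$; no $k$-uniform pointwise bound on $\nabla G(x,\cdot)$ is available from \cite{GPSS1} or \cite{GPSS2}, which only control $G$ itself (and weighted $L^2$ integrals of $\nabla G$ as in \eqref{eqn:gr3}). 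Indeed, if such a pointwise gradient bound away from $x$ held for free, most of the work in Section 2 would be unnecessary. The paper instead estimates this term by Cauchy--Schwarz followed by Young's inequality,
\[
\Big|\int_{U_k}|\nabla u|\,\langle\nabla G(x,\cdot),\nabla\rho_k\rangle_\omega\,\omega^n\Big|\leq\Big(\int_{U_k}\big(|\nabla u|^4+|\nabla G(x,\cdot)|^4\big)\omega^n\Big)^{1/2}\|\nabla\rho_k\|_{L^2},
\]
and then invokes the uniform $L^4$ bound on $|\nabla u|$ from Corollary \ref{cor 1.1} together with the uniform $L^4$ bound on $|\nabla G(x,\cdot)|$ away from $x$ from Proposition \ref{prop 2-3}. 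Both $L^4$ bounds are exactly the new Sobolev estimates obtained under the Calabi energy hypothesis; they are the crux of the argument and should not be replaced by an unproved pointwise bound. Substituting the $L^4$ pairing for your $L^\infty$-on-$\nabla G$ claim repairs the proof and brings it in line with the paper's.
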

\begin{proof}
By Bochner's formula for $\omega$ on $X^\circ$, we have  
$$\Delta_\omega \abs{\nabla u} \ge  \abs{\nabla \nabla u}  + \abs{\nabla \overline{\nabla} u}  - 2\lambda \abs{\nabla u} - 2  \abs{\nabla u}. $$
Applying  Kato's inequality, we have  
\begin{equation}\label{eqn:new1.21}
\Delta  |{\nabla u}|  \ge  - (\lambda+1) |{\nabla u}| .
\end{equation}
We now apply Green's formula to the smooth function with compact support, $\rho_k |\nabla u|_\omega$. Then for any $x\in X^\circ$, 
\begin{align*}
&~ |\nabla u|(x)=  |\nabla u|(x)\rho_k(x)\\
\le &~ \frac{1}{V} \int_X \rho_k |\nabla u| \omega^n + \int_X G(x,\cdot)\big( |\nabla u| \Delta  \rho_k + 2 \innpro{\nabla \rho_k, \nabla |\nabla u|}_\omega + (\lambda+1) \rho_k |\nabla u|   \big)\omega^n\\
=& I_1 + I_2
\end{align*}
for sufficiently large $k$.
We will estimate the three terms in $I_2$. 

In a fixed small neighborhood $U_x\subset X^\circ$ of $x$, $|\nabla u|$ is smooth and bound, while $G(x,\cdot)$ is integrable in $U_x$. In  $X\setminus U_x$, $G(x,\cdot)$ is bounded by a constant (possibly dependent on $x$) and $|\nabla u|$ is integrable. Hence the function $G(x,\cdot)|\nabla u|$ is $L^1$ integrable in $(X,\omega^n)$. Then by the dominated convergence theorem, we have for the third term
$$\lim_{k\to \infty}\int_{X} (\lambda+1)G(x, \cdot) |\nabla u| \rho_k \omega^n =(\lambda+1)  \int_{X} G(x, \cdot) |\nabla u| \omega^n. $$ 

For the second term, we observe that the corresponding integral is in fact taken over $U_k$, where $G(x,\cdot)$ is bounded by a constant $C_x>0$. By H\"older's inequality, we have
\begin{align*}
\Big|\int_X G(x,\cdot) 2 \innpro{\nabla \rho_k, \nabla |\nabla u|}_\omega \omega^n \Big|\le &~ C_x \int_{U_k} |\nabla \rho_k| \cdot |\nabla |\nabla u\|\omega^n\\
\le &~ C_x \Big( \int_{U_k} |\nabla \rho_k|^2\omega^n \Big)^{1/2} \cdot\Big( \int_{U_k} |\nabla |\nabla u\|^2\omega^n\Big)^{1/2}\\
\le &~  C_x \Big( \int_{U_k} |\nabla \rho_k|^2\omega^n \Big)^{1/2} \to 0
\end{align*}
 as $ k\to \infty$, where  Kato's inequality and Corollary \ref{cor 1.1} are used in the last inequality. 
 
 For the first term of $I_2$, we rewrite it as 
\begin{align}\label{eqn:new1.22}
\int_X G(x,\cdot) |\nabla u| \Delta_\omega \rho_k = &~  - \int_X \big( |\nabla u| \innpro{\nabla G(x,\cdot), \nabla \rho_k} + G(x,\cdot) \innpro{\nabla \rho_k, \nabla |\nabla u|}_\omega\big).
\end{align}
The second term on the RHS of \eqref{eqn:new1.22} tends to zero by similar argument before. 
The first term on the RHS of \eqref{eqn:new1.22} satisfies 
\begin{align*}
\Big|  \int_X  |\nabla u| \innpro{\nabla G(x,\cdot), \nabla \rho_k}  \Big| \le &~ \Big(\int_{U_k} |\nabla u|^2 |\nabla G(x,\cdot)|^2 \omega^n \Big)^{1/2} \| \nabla \rho_k\|_{L^2}\\
\le &~ \Big(\int_{U_k} (|\nabla u|^4 +  |\nabla G(x,\cdot)|^4) \omega^n \Big)^{1/2} \| \nabla \rho_k\|_{L^2}
\\
\le &~ C_x \| \nabla \rho_k\|_{L^2} \to 0, \text{ as }a\to\infty,
\end{align*}
where we apply Corollary \ref{cor 1.1} and Proposition \ref{prop 2-3}. Combining the above estimates, we obtain that 
\begin{equation}\label{eqn:new1.23}|\nabla u|_\omega(x) \le C + (\lambda+ 1) \int_X G(x,\cdot) |\nabla u|_\omega \omega^n,
\end{equation}
for some constant $C = \frac{1}{V_\omega} \int_X |\nabla u| \omega^n$ independent of $x$. This completes the proof of the lemma.
\end{proof}

We can now complete the proof of Proposition \ref{thm:lip2}.

\begin{proof}[Proof of Proposition \ref{thm:lip2}]

We fix $\gamma =2N+1$ for some sufficiently large integer $N$, so that  $\gamma^* = \frac{\gamma}{\gamma-1}= \frac{2N+1}{2N}$  is sufficiently close to $1$.  Then 
$\|G(x, \cdot)\|_{L^{\gamma^*}(X, \omega^n)}$ is uniformly bounded for all $x\in X$ by Lemma \ref{lemma green limit} (c.f. \cite{GPSS1, GPSS3}). 
We define a linear operator $\mathbf{T}$ by 
$$\mathbf{T} (f) (x) = \int_X G(x,y)f(y) \omega^n(y). $$
By H\"older's inequality, we have 
$$\| \mathbf{T}(f)\|_{L^\infty} \le \sup_{x\in X^\circ}\| G(x,\cdot)\|_{L^{\gamma^*}}\cdot   \| f\|_{L^\gamma}\le C \|f\|_{L^\gamma}.$$
On the other hand, by the generalized Minkowski inequality, we also have
$$
\| \mathbf{T}(f)\|_{L^{\gamma^*}} \le \int_X |f|(y) \Big(\int_X G(x,y)^{\gamma^*}\omega^n(x) \Big)^{1/\gamma^*} \omega^n(y)\le C \| f\|_{L^1}.
$$
Thus the linear operator $\mathbf{T}$ is of strong types $(\infty, \gamma)$ and $(\gamma^*, 1)$. Then by the Riesz–Thorin Interpolation Theorem, it follows that the operator $\mathbf T$ is also of strong type $(p(t), q(t))$, i.e., 
\begin{equation}\label{iter0}
\| \mathbf{T}(f)\|_{L^{p(t)}} \le  C_t \| f\|_{L^{q(t)}}.
\end{equation}
for any $t\in (0,1)$, where 
\begin{equation}\label{eqn:old1.20}
\frac{1}{p(t)} = \frac{t}{\infty} + \frac{1 - t}{\gamma^*} =  \frac{1 - t}{\gamma^*}, \quad \frac{1}{q(t)} = \frac{t}{\gamma} + \frac{1-t}{1}. 
\end{equation}

Denote $\phi = |{\nabla u}|$.  Set  $t= \gamma^*/2$ for \eqref{eqn:old1.20}. We have
$$p'_1 = \frac{2\gamma^*}{2-\gamma^*} = \frac{2\gamma}{\gamma -2}>2, ~q'_1=2.$$
Then  
\begin{equation}\label{iter1}
\| \mathbf{T}( \phi)\|_{L^{p'_1}} \le A_1 \| \phi\|_{L^2} \le B_1
\end{equation}
for some fixed constants $A_1, B_1>0$, since the $L^2$ norm of $\phi$ is bounded by \cite{GPSS2}. 
Lemma \ref{lem 3-8} implies that 
\begin{equation}\label{iter2}
\phi(x) \le C + (1+\lambda) \mathbf{T}(\phi)(x).
\end{equation}
 Hence $\| \phi\|_{L^{p'_1}} $ is also bounded. We can now iterate the above process by repeatedly applying (\ref{iter0}) and (\ref{iter2}).  Applying (\ref{eqn:old1.20}) inductively for
 $$ \gamma^* = p'_{k-1}, ~ t = \gamma^*/2, $$
  we have 
 $$ \frac{1}{p'_{k}} = \frac{1}{p'_{k-1}} - \frac{1}{\gamma},$$
or equivalently, 
$$p'_{k} = \frac{ 2 \gamma}{\gamma-2k}$$
whenever $k\leq N = (\gamma-1)/2$. Therefore there exists $B_k>0$ such that 
\begin{equation}\label{eqn:key 4}
\| \mathbf{T}(\phi)\|_{L^{p'_{k}}} \le A_k \| \phi\|_{L^{p'_{k-1}}} \le B_k
\end{equation}
for some uniform constants $B_k\geq 0$.
In particular,   we have 
$$p'_N =2\gamma >\gamma$$
and  so
$$\| \mathbf{T}(\phi)\|_{L^\gamma} \le C$$
from \eqref{eqn:key 4}.  
Applying Lemma \ref{lem 3-8} again, this implies that $\|\phi\|_{L^\gamma}\le C$. Together with this and H\"older's inequality we obtain for any $x\in X^\circ$
$$\phi(x) \le C + C \| G(x,\cdot)\|_{L^{\gamma^*}} \| \phi\|_{L^\gamma} \le C,$$ 
which gives the desired $L^\infty$-estimate for $\phi = |{\nabla u}|$.
\end{proof}
\begin{remark}
One can also show the $L^\infty$-bound of $|\nabla u|_\omega$ under a slightly weaker condition on the $L^{2-\epsilon}$-norm of  $\ric(\omega_j)_-$ for a small $\epsilon>0$. The proof is analogous to that in Proposition \ref{thm:lip2} by invoking the inequality \eqref{eqn:weak1} in Remark \ref{remark 2.1} and the following improved Kato's inequality: for some $\alpha\in (0,\frac 1 2)$ and $C>0$,
$$\Delta_\omega (1 + \abs{\nabla u}_\omega)^\alpha\ge - C (1 + \abs{\nabla u}_\omega)^\alpha,$$
which can be proved by using the fact that $\Delta_\omega u$ is bounded. 
\end{remark}

\begin{corollary} \label{rcdsp} $(\hat X, d_\omega, \omega^n)$ is a compact ${\rm RCD}(-1, 2n)$ space.

\end{corollary}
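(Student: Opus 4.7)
The strategy is to verify the Bakry--\'Emery condition $\mathrm{BE}(-1,2n)$ for $(\hat X, d_\omega, \omega^n)$, which, combined with the infinitesimal Hilbertianity established in \cite{GPSS2}, is equivalent to $\mathrm{RCD}(-1,2n)$. First I would recall from \cite{GPSS2} that $(\hat X, d_\omega, \omega^n)$ is a compact PI space on which the Cheeger energy agrees with the natural quadratic form $\mathcal{E}(u) = \int_X |\nabla u|^2_\omega\, \omega^n$, so the space is infinitesimally Hilbertian. In this setting it suffices to verify the weak Bochner inequality (in the normalization of \eqref{bocheqn}) for each eigenfunction $u$ of $\Delta_\omega$ and each nonnegative Lipschitz test function $\varphi$ on $\hat X$,
\begin{equation*}
\int_X \Delta_\omega \varphi \cdot |\nabla u|^2_\omega\, \omega^n \;\ge\; \int_X \varphi \Bigl( 2\langle \nabla u, \nabla \Delta_\omega u\rangle_\omega - 2|\nabla u|^2_\omega + \tfrac{1}{n}(\Delta_\omega u)^2\Bigr) \omega^n.
\end{equation*}
Density of eigenfunctions in the Sobolev domain, together with standard Dirichlet-form closure arguments, will then extend the inequality to the full class of test functions required.

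On the smooth locus $X^\circ$, the K\"ahler Bochner identity \eqref{bocheqn}, the Cauchy--Schwarz bound $|\nabla \overline{\nabla} u|^2_\omega \ge \tfrac{1}{n}(\Delta_\omega u)^2$, and the hypothesis $\ric(\omega) \ge -\omega$ immediately yield the above inequality pointwise. To upgrade this to a weak inequality on all of $\hat X$, I would multiply by $\varphi \rho_k$, with $\rho_k$ the capacity-zero cut-off from Lemma \ref{cutoff}, integrate by parts on $X^\circ$, and pass to the limit $k\to\infty$. The error terms generated by the cut-off are of the schematic form $\int_X \varphi |\nabla \rho_k|_\omega |\nabla u|_\omega (|\nabla \nabla u|_\omega + |\nabla \overline{\nabla} u|_\omega)\, \omega^n$ and $\int_X \varphi |\nabla \rho_k|_\omega |\nabla u|^3_\omega\, \omega^n$; each is controlled by Cauchy--Schwarz together with $\int_X |\nabla \rho_k|^2_\omega\, \omega^n \to 0$ from Lemma \ref{cutoff}. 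The integrability required to make these bounds uniform in $k$ is precisely $|\nabla u|_\omega \in L^\infty(X)$ from Proposition \ref{thm:lip2}, together with the $L^2$-Hessian and $L^4$-gradient bounds of Corollary \ref{cor 1.1}.

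Once the weak Bochner inequality is established for every eigenfunction, invoking the Erbar--Kuwada--Sturm (or Ambrosio--Gigli--Savar\'e) equivalence between $\mathrm{BE}(K,N)$ and $\mathrm{RCD}(K,N)$ in the infinitesimally Hilbertian setting allows one to conclude that $(\hat X, d_\omega, \omega^n)$ is a compact $\mathrm{RCD}(-1,2n)$ space. The main obstacle is the rigorous treatment of the singular set: since the Ricci lower bound is available only on $X^\circ$, the whole argument rests on the zero-capacity of $\mathcal{S}(X)$ together with the new Lipschitz regularity and integrability estimates for eigenfunctions from this section, which jointly ensure that every truncation error term vanishes in the limit.
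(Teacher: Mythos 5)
Your argument is correct in spirit but packages the proof differently from the paper. The paper's proof is essentially a citation: it observes that $(\hat X, d_\omega, \omega^n)$ is an almost smooth metric measure space in the sense of Honda (Definition 3.1 of \cite{Ho}), since $\cS(X)$ has zero capacity and $\ric(\omega)\ge -1$ on $X^\circ$, and then invokes the RCD criterion for such spaces (Corollary 3.10 of \cite{Ho} together with Corollary 8 of \cite{Sz24}). The only nontrivial new hypothesis of that criterion is the Lipschitz regularity of eigenfunctions, which is exactly Proposition \ref{thm:lip2}; everything else is already supplied by the diameter and spectral theory of \cite{GPSS2}. You instead re-derive the content of Honda's criterion directly: you verify the weak Bochner inequality for eigenfunctions via the zero-capacity cut-offs, controlling the truncation errors with the $L^\infty$-gradient bound of Proposition \ref{thm:lip2} and the $W^{2,2}$/$W^{1,4}$ estimates of Corollary \ref{cor 1.1}. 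The mathematical ingredients are the same, and your route makes it more transparent why those estimates are precisely what is needed; the paper's route is shorter and shifts the burden of verifying that the inequality for eigenfunctions suffices onto Honda's theorem.

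One point to be careful about: the passage from a weak Bochner inequality for eigenfunctions to the full $\mathrm{BE}(-1,2n)$ condition, which you dispatch with ``density of eigenfunctions in the Sobolev domain, together with standard Dirichlet-form closure arguments,'' is precisely the delicate step that Honda's theorem handles rigorously. The $\Gamma_2$ operator involves third-order derivatives of the test function, and one cannot pass to the limit along an $L^2$-eigenfunction expansion without additional uniform control; Honda's argument uses in an essential way the Lipschitz bound on eigenfunctions together with the almost-smooth structure to carry out the closure. As written, your proposal treats this extension as routine, but it is in fact the place where the hypotheses are used most heavily, and where a self-contained treatment would require care. Also check your dimension constant: in the K\"ahler normalization of \eqref{bocheqn}, the Riemannian Laplacian is twice the complex Laplacian, so the $(\Delta_\omega u)^2$ coefficient that yields $N=2n$ is $\tfrac{2}{n}$ rather than $\tfrac{1}{n}$.
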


\begin{proof} Since $\omega$ has bounded local potentials, the singular set $\mathcal{S}(X)= X\setminus X^\circ$ as an analytic subvariety of $X$ must have capacity $0$. Since $\omega$ is smooth on $X^\circ$ and $\ric(\omega)$ is bounded below by $-1$ on $X^\circ$, $(\hat X, d_\omega, \omega^n)$ is a compact almost smooth metric measure space introduced in \cite{Ho} (c.f. Defintion 3.1 \cite{Ho}) by the diameter and spectral theory established in \cite{GPSS2}.  One can apply the criterion (Corollary 3.10 \cite{Ho} and Corollary 8 \cite{Sz24}) on the RCD propery for almost smooth metric measure spaces. Except the Lipschitz condition for eigenfunctions, all other conditions in the criterion ((1), (2), (4) in Corollary 3.10 \cite{Ho}) are satisfied for all singular K\"ahler metrics in $ \mathcal{V}(X, \theta_X, n, A, p, K)$ with Ricci curvature bounded below on $X^\circ$ by the diameter bound and the spectral theory obtained in \cite{GPSS2}. The remaining Lipschitz condition for eigenfunctions for $\omega$ can now be reduced to Proposition \ref{thm:lip2}. Therefore $(\hat X, d_\omega, \omega^n)$ is indeed a non-collapsed ${\rm RCD}(-1, 2n)$ space. 
\end{proof}


\section{Schwarz lemma}

In this section, we will prove the following Schwarz lemma.

\begin{prop} \label{schw1} Let $X$ be an $n$-dimensional normal K\"ahler variety.   Suppose that $\omega \in \mathcal{V}(X, \theta_X, n, A, p, K)$ is a singular K\"ahler metric that admits a regularization $(Y, \{\omega_j\}_{j=1}^\infty)$ satisfying \eqref{cls2},  \eqref{nashb2} and \eqref{ricp2}. If 
$$\ric(\omega) \geq - \omega$$ 
on $X^\circ$, 
then there exists $c=c(X, \theta_X, n, A, p, K)>0$ such that on $X^\circ$, we have 
$$\omega \geq c\theta_X. $$

\end{prop}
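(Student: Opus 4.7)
The plan is to apply a Chern--Lu / Yau--Schwarz maximum principle on the resolution $Y$ to bound $\tr_{\omega_j}(\pi^*\theta_X)$ uniformly on compact subsets of $\pi^{-1}(X^\circ)$, and then to pass to the limit in order to obtain $\tr_\omega(\theta_X)\le C$ on $X^\circ$, which is equivalent to the desired $\omega\ge c\theta_X$. Write $\omega_j=\chi_j+i\partial\bar\partial\varphi_j$ with $\chi_j=\pi^*\chi+\epsilon_j\alpha_0\in[\omega_j]$ for smooth K\"ahler representatives $\chi\in[\omega]$ and $\alpha_0\in\alpha$. Kolodziej's $L^\infty$-estimate applied with the uniform Nash entropy bound \eqref{nashb2} yields $\sup_Y|\varphi_j|\le C$ uniformly in $j$. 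Since any two smooth K\"ahler forms on $X$ are uniformly comparable, $\pi^*\chi\ge c_0\,\pi^*\theta_X$ on $Y$, so $\tr_{\omega_j}\chi_j\ge c_0\tr_{\omega_j}(\pi^*\theta_X)$ and $\Delta_{\omega_j}\varphi_j=n-\tr_{\omega_j}\chi_j\le n-c_0\tr_{\omega_j}(\pi^*\theta_X)$.

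Apply the Chern--Lu inequality to the holomorphic map $\pi:(Y,\omega_j)\to(X,\theta_X)$, after replacing the degenerate $\pi^*\theta_X$ by the smooth K\"ahler form $\pi^*\theta_X+\delta\theta_Y$, to obtain
$$\Delta_{\omega_j}\log\tr_{\omega_j}(\pi^*\theta_X+\delta\theta_Y)\;\ge\;-C_0\,\tr_{\omega_j}(\pi^*\theta_X+\delta\theta_Y)-|\ric(\omega_j)_-|_{\omega_j},$$
where $C_0$ depends only on the bisectional curvature of $\theta_X$ and $\theta_Y$. Introduce the barrier
$$H_{j,\delta,\epsilon}\;:=\;\log\tr_{\omega_j}(\pi^*\theta_X+\delta\theta_Y)-\Lambda\varphi_j+\epsilon\log|\sigma_E|^2_{h_E},$$
where $\sigma_E$ is a defining section of the exceptional locus $E=\pi^{-1}(\mathcal{S}(X))$ equipped with a smooth Hermitian metric $h_E$. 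Since $\epsilon\log|\sigma_E|^2\to-\infty$ on $E$, the supremum is attained at some $y_{j,\delta,\epsilon}\in Y\setminus E=\pi^{-1}(X^\circ)$; combining the previous inequalities and choosing $\Lambda$ large enough that $\Lambda c_0>2C_0$, we obtain a differential inequality of the form
$$\Delta_{\omega_j}H_{j,\delta,\epsilon}\;\ge\;c_1\tr_{\omega_j}(\pi^*\theta_X+\delta\theta_Y)-C_1-|\ric(\omega_j)_-|_{\omega_j}-\epsilon C_2.$$

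The main obstacle is the pointwise Ricci term $|\ric(\omega_j)_-|$: hypothesis \eqref{ricp2} only gives an $L^2$ bound on it, not a pointwise one, so the maximum principle cannot be applied naively at $y_{j,\delta,\epsilon}$. I resolve this by passing to the limit $j\to\infty$ first. On $Y\setminus E$ the $C^2_{loc}$-convergence $\omega_j\to\pi^*\omega$ combined with the hypothesis $\ric(\omega)\ge-\omega$ on $X^\circ$ gives $\ric(\omega_j)\to\pi^*\ric(\omega)\ge-\pi^*\omega$ in $C^0_{loc}$, so the Ricci term is uniformly bounded on every compact subset of $Y\setminus E$ once $j$ is large. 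The barrier $\epsilon\log|\sigma_E|^2$ prevents the maximum locations from escaping to $E$, so, up to subsequences, $y_{j,\delta,\epsilon}\to y_\infty\in Y\setminus E$. Applying the maximum principle to the limiting inequality at $y_\infty$ yields $\tr_\omega(\theta_X)(\pi(y_\infty))\le C(\epsilon,\Lambda)$ independently of $\delta$. Using the maximality of $H_{\infty,\delta,\epsilon}$ at $y_\infty$ together with the uniform $L^\infty$-bound on $\varphi$, we deduce
$$\log\tr_\omega(\theta_X)(p)\;\le\;C_\epsilon+2\Lambda\|\varphi\|_{L^\infty}-\epsilon\log|\sigma_E|^2_{h_E}(\pi^{-1}(p))$$
for every $p\in X^\circ$; letting $\delta\to 0$ and then $\epsilon\to 0$ at each fixed $p$ (where $|\sigma_E|^2>0$) yields $\tr_\omega(\theta_X)\le C'$ uniformly on $X^\circ$, equivalent to $\omega\ge c\theta_X$.
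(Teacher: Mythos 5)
You correctly identify the central difficulty: hypothesis \eqref{ricp2} only gives an $L^2$ bound on $\ric(\omega_j)_-$, not a pointwise one, so the naive maximum-principle Schwarz argument does not close. However, your proposed fix — ``pass to the limit $j\to\infty$ first'' and apply the maximum principle to the limiting inequality at a limiting maximum point $y_\infty$ — has a genuine gap. To make this work you must show that the maximum locations $y_{j,\delta,\epsilon}$ of $H_{j,\delta,\epsilon}$ stay in a fixed compact subset of $Y\setminus E$ uniformly as $j\to\infty$, and you offer no argument for this. The barrier $\epsilon\log|\sigma_E|^2_{h_E}$ guarantees the max is off $E$ for each \emph{fixed} $j$, but it does not prevent $y_{j,\delta,\epsilon}\to E$ as $j\to\infty$: as $\epsilon_j\to 0$ the classes $[\omega_j]=\pi^*[\omega]+\epsilon_j\alpha$ force $\omega_j$ to collapse in the $E$-directions, so $\tr_{\omega_j}(\pi^*\theta_X+\delta\theta_Y)$ can blow up near $E$, and there is no a priori reason the logarithmic barrier dominates this blow-up. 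Moreover, since the blow-up rate of $\tr_{\omega_j}$ near $E$ is exactly the quantity being estimated, any attempt to rule out escape from this data is circular. If the max locations do approach $E$, the $C^2_{\mathrm{loc}}(Y\setminus E)$ convergence gives you no pointwise Ricci control at $y_{j,\delta,\epsilon}$, and the argument fails.

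The paper's proof avoids the maximum principle entirely and is built to exploit the $L^2$ Ricci bound rather than fight against it. After the Chern--Lu step, it sets $u_j=e^{-A\varphi_j}\tr_{\omega_j}(\pi^*\theta_X)$ and derives a differential inequality of the form $\Delta_{\omega_j}u_j\ge-(\ric(\omega_j)_-)^2+|\nabla u_j|^2/u_j+u_j^2-C$, which it \emph{integrates} over $(Y,\omega_j^n)$; the Ricci term then contributes only through its $L^2$ norm, yielding uniform bounds on $\int u_j^2$ and $\int|\nabla u_j|^2/u_j$. Passing to the limit gives the corresponding integral bounds for $u_\infty=e^{-A\varphi}\tr_\omega(\theta_X)$ on $X^\circ$. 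It then upgrades integral control to a pointwise bound on $\phi=\log(u_\infty+1)$ via the Green's representation formula on $(X,\omega)$: the pointwise Ricci bound on $X^\circ$ is used only to show $\Delta_\omega\phi\ge -C$ (no maximum principle), and the boundary/cut-off terms in Green's formula are controlled precisely by the new $W^{2,2}$ and $W^{1,4}$ estimates for Green's function in Proposition~\ref{prop 2-3}, which is where the Calabi-energy assumption enters indispensably. Your proposal bypasses Proposition~\ref{prop 2-3} altogether, which is a sign that the argument is missing the key input; as written it cannot be completed without the additional (unavailable) pointwise information near $E$.
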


\begin{proof}    We will choose a smooth K\"ahler form $\theta_Y \in [\alpha]$ on $Y$ and a smooth metric $\omega_0\in [\omega]$ on $X$ and  let 
$$\theta_j = \pi^*\omega_0+ \epsilon_j \theta_Y \in [\omega_j].$$
Then $\theta_j > \pi^*\theta_X$ and we define $\varphi_j$ by
$$\omega_j = \theta_j + \ddbar \varphi_j, ~\sup_X \varphi_j =0.$$
By the uniform Nash entropy bound for $\omega_j$, there exists $C>0$ such that
$$\|\varphi_j\|_{L^\infty(Y)} \leq C. $$
Furthemore, $\omega= \omega_0 + \ddbar \varphi$ for some $\varphi\in PSH(X, \omega_0)\cap L^\infty(X)$ with $\sup_X\varphi=0$. In particular, $\varphi_j \rightarrow \pi^*\varphi$ in $C^3_{loc}(\pi^{-1}(X^\circ))$ by our assumption for $\omega_j$.

Since the smooth K\"ahler metric $\theta_X$ is the restriction of smooth K\"ahler metric of ambient $\mathbb{C}^N$ for a local holomorphic embedding of $X$ into $\mathbb{C}^N$, the bisectional curvature of $\theta_X$  is uniformly bounded above. We can apply the Chern-Lu formula and obtain
\begin{equation}\label{eqn:chern}
\Delta_{\omega_j} \log \tr_{\omega_j} ( \pi^*\theta_X) \ge \frac{\innpro{\ric(\omega_j), \theta_X}_{\omega_j}}{\tr_{\omega_j}( \theta_X)} - B \tr_{\omega_j} (\theta_X),
\end{equation} 
for a constant $B\ge 0$ that depends on the upper bound of the bisectional curvature of $\theta_X$. For  $A>>1$ we have
\begin{equation}\label{eqn:n2.3}
\Delta_{\omega_j}   \log \xk {e^{-A \varphi_j} \tr_{\omega_j} (\pi^*\theta_X) } \ge - \ric(\omega_j)_- +  \tr_{\omega_j} (\pi^*(A\omega_0 - B\theta_X)) - An,
\end{equation} 

We let  $u_j: = e^{-A \varphi_j} \tr_{\omega_j} (\theta_X) $ and choose $A>>B+1$. Then from \eqref{eqn:n2.3} and there exists $C>0$ such that 
\begin{align*}\label{eqn:n2.4}
\Delta_{\omega_j} u_j \ge &~ - \ric(\omega_j)_- \cdot u_j + \frac{\abs{\nabla u_j}_j}{u_j} + 2 u_j^2 - C\\
\ge &~ - \left( \ric(\omega_j)_- \right)^2 + \frac{\abs{\nabla u_j}_j}{u_j} +  u_j^2 -C .
\end{align*} 
 Integrating this over $(Y, \omega_j^n)$, by our assumption on the $L^2$-bound for $\ric(\omega_j)_{-}$, there exists $C>0$ such that  
\begin{equation}\label{eqn:n2.4}
\int_Y \left(\frac{\abs{\nabla u_j}_j}{u_j}  + u_j^2   \right)\omega_j^n \le C.
\end{equation}
Let $u_\infty : = e^{-A\varphi}\tr_\omega (\theta_X)$. Since $\varphi_j$ and $\omega_j$ converge smoothly to $\pi^*\varphi$ and $\pi^*\omega$ on $\pi^{-1}(X^\circ)$ as  $i\to\infty$, respectively,  we have 
\begin{equation}\label{eqn:n2.5}
\int_X \bk{\frac{\abs{\nabla u_\infty}_\omega}{u_\infty+1}  + u_\infty^2  }\omega^n \le C.
\end{equation}
We consider the function $\phi = \log (u_\infty + 1)$. By Chern-Lu inequality again and by choosing $A>>1$, there exists $C>0$ such that on $X^\circ$, 
\begin{align*}
\Delta_\omega \phi = &~ \frac{u_\infty  \Delta_\omega \log u_\infty}{u_\infty  + 1 } + \frac{\abs{\nabla u_\infty}_\omega}{u_\infty (u_\infty  + 1)}   - \frac{\abs{\nabla u_\infty}_\omega}{(u_\infty + 1)^2}\\
\ge &~ \frac{u_\infty (u_\infty- C)}{u_\infty + 1}  \ge  -C.
\end{align*}
We will use the same cut-off functions $\rho_k$ as constructed in Lemma \ref{cutoff}. Fix a point $x\in \{\rho_k = 1\}^\circ$. Since $\phi = \rho_k \phi$ on $ \{\rho_k=1 \}$, we have by Green's formula for the function $\phi \rho_k$ which has compact support in $X^\circ$
\begin{align}\label{eqn:n2.6}
\phi(x) = \frac{1}{V_\omega}\int_X \phi \rho_k \omega^n + \int_X G(x,\cdot) \bk{ - \phi \Delta \rho_k - 2\innpro{\nabla \rho_k, \nabla \phi}_\omega - \rho_k \Delta\phi   }\omega^n.
\end{align}
We estimate the terms on the RHS of \eqref{eqn:n2.6} separately.
\begin{eqnarray*}
&& \Big|\int_X G(x,\cdot) 2\innpro{\nabla \rho_k, \nabla \phi}_\omega \omega^n \Big| \\
&= &  \Big|\int_{X\setminus \{\rho_k=1\}} G(x,\cdot) 2\innpro{\nabla \rho_k, \nabla \phi}_\omega \omega^n \Big| \\
&\leq& C_x \bk{\int_X \frac{\abs{\nabla u_\infty}_\omega}{(u_\infty + 1)^2} \omega^n}^{1/2} \left( \int_X   | \nabla \rho_k |^2_\omega \omega^n \right)^{1/2}  \to 0,
\end{eqnarray*}
as $k \to\infty$, where we have used \eqref{eqn:n2.5} and the upper bound of $G(x,\cdot)$ on $U_k$. 
\begin{align*}
& \Big|\int_X G(x,\cdot) \phi \Delta_\omega\rho_k \omega^n \Big| \\
\le& \int_X |\nabla G(x,\cdot)| |\nabla\rho_k| \phi \omega^n + \int_X  G(x,\cdot) |\innpro{\nabla \rho_k, \nabla \phi}_\omega|   \omega^n \\
\le &  \bk{\int_{X\setminus \{\rho_k=1\}} \phi^2 \abs{\nabla G(x,\cdot)}_\omega \omega^n}^{1/2} \left( \int_X  | \nabla \rho_k |^2_\omega \omega^n \right)^{1/2}+  C_x \left( \int_X  | \nabla \rho_k |^2_\omega \omega^n \right)^{1/2}\\
\le & \left( \left( \int_{X\setminus \{\rho_k=1\}} \left( \phi^4 +  |\nabla G(x,\cdot)|^4_\omega\right) \omega^n \right)^{1/4} +  C_x  \right) \left( \int_X   | \nabla \rho_k |^2_\omega \omega^n \right)^{1/2}\\
\to& 0
\end{align*}
as $k\to \infty$, where we have used Proposition \ref{prop 2-3} and the elementary fact that 
$$\phi^4 = (\log(1+ u_\infty))^4\le C u^2_\infty$$ and then apply \eqref{eqn:n2.5}. These together with \eqref{eqn:n2.6} yields that $\phi(x)\le C$ for some constant $C$ independent of $x$. This gives the upper bound of $u_\infty = e^{-A \varphi} \tr_{\omega} (\theta_X)$  as well as $\tr_{\omega} (\theta_X)$. This completes the proof of the proposition. 
\end{proof}


\section{Approximations by twisted cscK metrics}

Suppose $Y$ is an $n$-dimensional K\"ahler manifold. Let $\alpha$ be a smooth K\"ahler metric and $\beta$ be a smooth non-negative closed $(1,1)$-form. We define the twisted $J$-function by 
\begin{equation}\label{jfun}
\delta J_{\alpha, \beta}(\varphi) = \int_Y\delta \varphi (\beta - c \alpha_\varphi) \wedge \alpha_\varphi^{n-1}, ~ \alpha_\varphi = \alpha + \ddbar \varphi, 
\end{equation}
where $\varphi \in PSH(Y, \alpha) \cap L^\infty(Y)$ and $$c= c(\alpha, \beta) =  \frac{ \beta\cdot \alpha^{n-1}}{\alpha^n}.$$
Given a smooth volume form $\Omega$, the twisted Mabuchi energy is defined by
\begin{equation}\label{mabu}
M_{\alpha, \beta}(\varphi) = \int_Y \log \left(\frac{\alpha_\varphi^n}{\Omega} \right) \alpha_\varphi^n + J_{\alpha, \beta - \ric(\Omega)}(\varphi), 
\end{equation}
The Euler-Lagrange equation  will be a constant twisted scalar curvature equation 
\begin{equation}\label{tcsck}
R(\alpha_\varphi) - \tr_{\alpha_\varphi}(\beta) = constant. 
\end{equation}
%
 There exists a constant twisted scalar curvature metric satisfying (\ref{tcsck}) on $Y$ if and only if the Mabuchi energy is proper by the works of Chen-Cheng \cite{CC}.

Let $X$ be an $n$-dimensional normal variety with log terminal singularities. Suppose $\eta$ is a smooth nonnegative closed $(1,1)$-form.  Now we will consider a twisted K\"ahler-Einstein metric  $\omega$ on $X$ satisfying  
\begin{equation}\label{ketap}
\ric(\omega) =  - \omega + \eta, 
\end{equation}
where $\eta$ is a smooth closed positive $(1,1)$-form on $X$. %
Equation (\ref{ketap}) implies that $[\omega]=[K_X] +[\eta]$ is a K\"ahler class. Let $\omega_X \in K_X + [\eta]$ be a smooth K\"ahler metric on $X$. We can choose a smooth adapted volume measure $\Omega$ on $X$ such that
$$\ric(\Omega) = - \ddbar \log\Omega = - \omega_X  + \eta. $$ 
Then the curvature equation (\ref{ketap}) is equivalent to the following complex Monge-Amp\`ere equation 
\begin{equation}\label{tkema}
(\omega_X + \ddbar \varphi)^n = e^{\varphi} \Omega. 
\end{equation}
Since $\varphi \in PSH(X, \omega_X)$ there exists $p>1$ such that 
$$\frac{\omega^n}{\Omega} \in L^p(X, \Omega).$$
Therefore $\varphi\in L^\infty(X) \cap C^\infty(X^\circ)$. In particular, $\omega$ is a singular twisted cscK metric satisfying
$$\mathrm{R}(\omega) - \tr_\omega(\eta) = n. $$
Since the sign of $\omega$ is negative, the Mabuchi energy $M_{\omega_X, \eta}$ is proper without assuming the triviality of the automorphism group (c.f. \cite{Sz24}).

Let $\pi: Y \rightarrow X$ be a resolution of singularities with $Y$ a smooth K\"ahler manifold. Let $\theta_Y$ be a smooth K\"ahler metric on $Y$. Our goal is to find a regularization satisfying the conditions in Definition \ref{calareg}  for $\omega$ on the nonsingular model $Y$.

\begin{prop} \label{regul}.  Suppose $-K_Y$ is $\pi$-nef. For any sufficiently small $\epsilon>0$, there exists a smooth twisted cscK metric $\omega_\epsilon \in [\pi^*\omega + \epsilon \theta_Y]$ satisfying 
\begin{equation}\label{szcsck}
\mathrm{R}(\omega_\epsilon)  - \tr_{\omega_\epsilon} (\pi^*\eta) = c_\epsilon. 
\end{equation}
Furthermore, there exist $\epsilon_j \rightarrow 0$ such that 
$\{ \omega_{\epsilon_j} \}$ is a regularization for $\omega$ as in Definition \ref{calareg}. In particular, $\omega_{\epsilon_j}$ converges smoothly to $\omega$ on any compact subset of $Y^\circ$.

\end{prop}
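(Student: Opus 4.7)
The strategy is to first construct $\omega_\epsilon$ for each small $\epsilon>0$ via the Chen--Cheng existence theory for twisted cscK metrics on the resolution $Y$, and then to verify that a subsequence satisfies the three conditions of Definition \ref{calareg}.

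\textbf{Step 1 (Existence via properness).} In the K\"ahler class $\alpha_\epsilon := [\pi^*\omega]+\epsilon[\theta_Y]$ on $Y$, equation \eqref{szcsck} is the Euler--Lagrange equation of the twisted Mabuchi functional $M_{\alpha_\epsilon,\pi^*\eta}$ defined in \eqref{mabu}. By \cite{CC}, a smooth solution $\omega_\epsilon$ exists as soon as this functional is proper. Since $X$ is log terminal, $K_Y = \pi^*K_X + E$ with $E$ a $\pi$-exceptional $\mathbb{Q}$-divisor of coefficients strictly greater than $-1$, so the class of the additional twist appearing inside the Mabuchi energy on $Y$ splits as $[\pi^*\eta + K_Y]+\alpha_\epsilon = [\pi^*(K_X+\omega_X+\eta)]-[E]+\epsilon[\theta_Y]$. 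The $\pi$-nef hypothesis on $-K_Y$ allows us to choose a smooth representative $\chi_\epsilon\in c_1(-K_Y)+\epsilon\alpha$ which is K\"ahler on $Y$ for each $\epsilon>0$; this is the same device used in \cite{Sz24}. Comparing the relevant $J$-type functionals on $Y$ with the proper singular twisted Mabuchi functional $M_{\omega_X,\eta}$ on $X$ (which is proper because the $\omega$-term in \eqref{ketap} has negative sign, so the argument of \cite{Sz24} applies verbatim) and absorbing the log-terminal discrepancy $-E$ into the entropy term, one obtains properness of $M_{\alpha_\epsilon,\pi^*\eta}$ with constants uniform in $\epsilon$. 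Chen--Cheng then produces the smooth twisted cscK solution $\omega_\epsilon$ of \eqref{szcsck}.

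\textbf{Step 2 (Uniform potential bound and convergence to $\pi^*\omega$).} Writing $\omega_\epsilon = (\pi^*\omega_X+\epsilon\theta_Y)+\ddbar\varphi_\epsilon$ normalized by $\sup_Y\varphi_\epsilon=0$, the Monge--Amp\`ere side of \eqref{szcsck} together with uniform properness yields, by Kolodziej--Eyssidieux--Guedj--Zeriahi estimates, $\|\varphi_\epsilon\|_{L^\infty(Y)}\le C$. Stability of bounded $\omega$-psh solutions on $X^\circ$ (where $\omega$ is the unique bounded solution of the singular twisted K\"ahler--Einstein equation \eqref{tkema}) forces $\varphi_\epsilon\to\pi^*\varphi$ in $L^1$, and then local elliptic regularity applied to \eqref{szcsck} on relatively compact subsets of $\pi^{-1}(X^\circ)$ upgrades this to $C^{3,\alpha}_{loc}$ convergence of $\omega_\epsilon$ to $\pi^*\omega$. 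This verifies \eqref{cls1}.

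\textbf{Step 3 (Nash entropy and Calabi energy).} The uniform $L^\infty$ bound on $\varphi_\epsilon$ feeds through the Monge--Amp\`ere equation $\omega_\epsilon^n = e^{F_\epsilon}(\pi^*\theta_X)^n$ to give the uniform Nash entropy bound \eqref{nashb1} for some $p'>n$, since the exponent $F_\epsilon$ is controlled by $\varphi_\epsilon$, the log of the volume ratio coming from the reference metric $\pi^*\theta_X$ relative to $\theta_Y$, and the quasi-psh weight associated to the exceptional divisor, all of which are integrable to any finite power by the log-terminal condition. For \eqref{scb1}, rewrite $\scR(\omega_\epsilon)^2 = (\tr_{\omega_\epsilon}\pi^*\eta+c_\epsilon)^2$; the constant $c_\epsilon$ is cohomologically determined by
\[ c_\epsilon V_{\omega_\epsilon} = -n[K_Y]\cdot[\alpha_\epsilon]^{n-1} - n[\pi^*\eta]\cdot[\alpha_\epsilon]^{n-1}, \]
hence uniformly bounded. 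The control of $\int_Y(\tr_{\omega_\epsilon}\pi^*\eta)^2\omega_\epsilon^n$ will follow by combining the identity
\[ (\tr_{\omega_\epsilon}\pi^*\eta)^2\,\omega_\epsilon^n = |\pi^*\eta|^2_{\omega_\epsilon}\,\omega_\epsilon^n + n(n-1)\,(\pi^*\eta)^2\wedge\omega_\epsilon^{n-2} \]
with an integrated Chern--Lu type bound for $\tr_{\omega_\epsilon}\theta_Y$ on $Y$, the second term on the right being already cohomological, while the first is controlled by the uniform $L^\infty$ bound on $\varphi_\epsilon$ together with $\pi^*\eta\le C\theta_Y$ and the $L^p$ bound on the density coming from \eqref{nashb1}.

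\textbf{Main obstacle.} The delicate point is the uniform properness of $M_{\alpha_\epsilon,\pi^*\eta}$ for all small $\epsilon>0$ in Step 1: one must show that the perturbation by $\epsilon\theta_Y$ together with the exceptional divisor $E$ does not destroy the coercivity inherited from $X$. The $\pi$-nef hypothesis on $-K_Y$ enters here and \emph{only} here, exactly as in \cite{Sz24}, allowing the $K_Y$-term inside the Mabuchi energy on $Y$ to be dominated by a semi-positive form whose cohomological contribution is absorbed into $\alpha_\epsilon$.
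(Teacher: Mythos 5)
Your Steps~1 and~2 match the paper's proof, which cites the argument of Theorem~3 in \cite{Sz24} for existence, the uniform Nash entropy bound, and local $C^k$ convergence on $Y^\circ$. The genuine gap is in your Step~3, the Calabi energy bound.

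You propose to control $\int_Y(\tr_{\omega_\epsilon}\pi^*\eta)^2\omega_\epsilon^n$ by writing it as $\int|\pi^*\eta|^2_{\omega_\epsilon}\omega_\epsilon^n + n(n-1)\int(\pi^*\eta)^2\wedge\omega_\epsilon^{n-2}$, observing the second term is cohomological, and then controlling the first via ``an integrated Chern--Lu type bound for $\tr_{\omega_\epsilon}\theta_Y$'' together with the $L^\infty$ bound on $\varphi_\epsilon$ and the $L^p$ density bound. This does not close. First, a Chern--Lu argument for $\tr_{\omega_\epsilon}\theta_Y$ cannot give a uniform estimate: the class $[\omega_\epsilon]=[\pi^*\omega]+\epsilon\alpha$ degenerates along the exceptional locus as $\epsilon\to 0$, so there is no constant $A$ independent of $\epsilon$ with $A(\pi^*\omega_X+\epsilon\theta_Y)-B\theta_Y\ge c\,\theta_Y$, and the Chern--Lu differential inequality never produces a self-improving $(\tr_{\omega_\epsilon}\theta_Y)^2$ term with a uniform constant. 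Second, the $L^\infty$ potential bound and the $L^p$ density bound control the determinant $\omega_\epsilon^n/\theta_Y^n$, not the trace $\tr_{\omega_\epsilon}\theta_Y$; one cannot deduce second-order control from first-order $L^\infty$ and determinant data alone, so the claimed control of the first term is unjustified.

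The paper avoids both issues by a different choice and by a bootstrap you have not built in. It applies Chern--Lu to $H_\epsilon=e^{-A\varphi_\epsilon}\tr_{\omega_\epsilon}(\pi^*\theta_X)$ with $\pi^*\theta_X$ (\emph{not} $\theta_Y$): since $\theta_X,\omega_X$ are genuine K\"ahler metrics on $X$, one has $A\omega_X - B\theta_X\ge c\,\theta_X$ on $X$, and pulling back to $Y$ keeps this semipositivity uniformly in $\epsilon$, yielding $\Delta_{\omega_\epsilon}H_\epsilon\ge -|\ric(\omega_\epsilon)|H_\epsilon + A^{2/3}H_\epsilon^2 - CA$ and hence, after integrating, $\int(\tr_{\omega_\epsilon}\pi^*\theta_X)^2\omega_\epsilon^n\le A^{-1/4}\int|\ric(\omega_\epsilon)|^2\omega_\epsilon^n + CA$. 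Since $\eta$ is a smooth form on $X$, $\pi^*\eta\le C\pi^*\theta_X$, which bounds $\int\scR(\omega_\epsilon)^2\omega_\epsilon^n$ in terms of $A^{-1/4}\int|\ric(\omega_\epsilon)|^2\omega_\epsilon^n + CA$ and lower-order terms. Finally, the cohomological identity $\int_Y|\ric(\omega_\epsilon)|^2\omega_\epsilon^n = \int_Y\scR(\omega_\epsilon)^2\omega_\epsilon^n - [K_Y]^2\cdot[\omega_\epsilon]^{n-2}$ lets one absorb: $\int\scR^2 \le CA^{-1/4}\int\scR^2 + CA$, and taking $A$ large closes the bound. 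Without the use of $\pi^*\theta_X$ and this absorption step, your Step~3 does not work.
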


\begin{proof} One can apply the same argument in the proof of Theorem 3 in \cite{Sz24} (c.f. \cite{BJT}) to construct the twisted cscK metrics $\omega_\epsilon$ satisfying (\ref{szcsck}). The same argument show that there exist $p>n$ and $C>0$ such that for all $\epsilon$ sufficiently small, 
$$\int_Y \left| \log \frac{\omega_\epsilon^n}{\theta_Y^n} \right|^p \omega_\epsilon^n \leq C$$
with uniformly $C^k$-bounds for $\omega_\epsilon$  on any fixed compact subsets of $Y^\circ$. 

It suffices for us to verify (3) in Definition \ref{calareg}, i.e.  the Calabi energy is uniformly bounded for all $\omega_\epsilon$. 
If we let
$$\omega_\epsilon = \omega_X + \epsilon \theta_Y + \ddbar \varphi_\epsilon, ~\sup_X \varphi_\epsilon=0, $$
then there exists $C>0$ such that for all sufficiently small $\epsilon>0$, 
$$\sup_Y |\varphi_\epsilon| \leq C. $$
We let $$H_\epsilon =e^{- A\varphi_\epsilon} \tr_{\omega_\epsilon}(\theta_X) $$
for a fixed sufficiently large $A>0$.
Since $\theta_X$ is a smooth K\"ahler metric on $X$ via local holomorphic embeddings into Euclidean spaces, its curvature is uniformly bounded above. We can apply the standard Chern-Lu formula and there exists $C>0$ such that 
$$\Delta_{\omega_\epsilon} H_\epsilon  \ge ~ - |\ric(\omega_\epsilon)|  H_\epsilon  +  A^{2\over 3} H_\epsilon^2 -CA  \geq - A^{- \frac{1}{4}} |\ric(\omega_\epsilon)|^2 +  H_\epsilon^2 - CA .$$
Therefore 
$$\int_Y \left( \tr_{\omega_\epsilon}(\pi^*\theta_X) \right)^2 \omega_\epsilon^n  \leq A^{-\frac{1}{4}} \int_Y |\ric(\omega_\epsilon)|^2 \omega_\epsilon^n+ CA. $$
On the other hand, 
\begin{eqnarray*}
 \int_Y \mathrm{ R}(\omega_\epsilon)^2 \omega_\epsilon^n &=& \int_Y \left( \tr_{\omega_\epsilon}(\pi^* \eta) \right)^2 \omega_\epsilon^n + 2c_\epsilon \int_Y \mathrm{R}(\omega_\epsilon) \omega_\epsilon^n + c_\epsilon^2 \int_Y \omega_\epsilon^n\\
 &\leq & C \int_Y \left( \tr_{\omega_\epsilon}(\pi^*\theta_X) \right)^2 \omega_\epsilon^n + 2c_\epsilon \int_Y \mathrm{R}(\omega_\epsilon) \omega_\epsilon^n + c_\epsilon^2 \int_Y \omega_\epsilon^n\\
 &\leq & CA^{-\frac{1}{4}} \int_Y |\ric(\omega_\epsilon)|^2 \omega_\epsilon^n + CA \\
 &\leq & \frac{1}{2} \int_Y \mathrm{R}(\omega_\epsilon)^2 \omega_\epsilon^n + C+ CA
 \end{eqnarray*}
 if $A$ is sufficiently large.
Hence there exists $C>0$ such that for all sufficiently small $\epsilon>0$, we have
$$\int_Y \mathrm{R}(\omega_\epsilon)^2 \omega_\epsilon^n \leq C. $$
This completes the proof of the proposition.
\end{proof}


 \section{Ricci curvature as a current}

In this section, we define the Ricci curvature on a normal K\"ahler space as a current as introduced in \cite{FGS2}. 
Let $X$ be an $n$-dimensional compact normal K\"ahler variety of log terminal singularities. There exists $m\in \mathbb{Z}^+$ such that $mK_X$ is a Cartier divisor on $X$.  We let $\Omega$ be a smooth adapted volume measure on $X$, i.e., $\Omega = f_U |\eta|^{2/m}$ on a local open set $U$ of $X$, where $\eta$ is a generator of  $K_X^m$ and $f_U$ is a nowhere vanishing smooth function on $U$. We let 
$$\ric(\Omega) = -\ddbar \log \Omega \in -[K_X].$$
Then $\ric(\Omega)$ is a smooth closed $(1, 1)$-form on $X$.

\begin{defn} \label{riclbdef} Let $X$ be a compact normal K\"ahler variety of log terminal singularities.  Suppose $\omega \in \mathcal{V}(X, \theta_X, n, A, p, K)$. The Ricci curvature of $\omega$ is said to be bounded below by $H\in \mathbb{R}$ if 
$$- \ddbar \log \frac{\omega^n}{\Omega} +\ric(\Omega) \geq H \omega$$
as currents, i.e., $$- \log\frac{\omega^n}{\Omega} \in PSH(X,  \ric(\Omega) -H\omega). $$
In particular, 
$$\ric(\omega) = - \ddbar\log \omega^n = - \ddbar \log \frac{\omega^n}{\Omega} +\ric(\Omega)$$ is defined to be the Ricci curvature of $\omega$ as a current. 

\end{defn}

 Similarly, we can define singular K\"ahler metrics with Ricci curvature bounded above by $H\in \mathbb{R}$ by requiring 
 $$\log \frac{\omega^n}{\Omega} \in PSH(X, H\omega - \ric(\Omega)).$$

 \section{Proof of Theorem \ref{thm:main3}: a special case}

In this section, we will prove a special case of Theorem \ref{thm:main3}.  Let us first recall the volume density for an RCD space. Let $(Z, d, \mu)$ be a non-collapsed ${\rm RCD}(K, N)$ space. For any $p\in Z$, we define the volume density at $p$ by
\begin{equation}
\nu_{Z}(p) = \lim_{r\rightarrow 0} \frac{ \vol B_Z(p, r)}{\vol B_{\mathbb{R}^N}(0,r)}.
\end{equation}

The following is the main result of this section.

\begin{theorem} \label{specthm3} Let $X$ be an $n$-dimensional normal K\"ahler variety with log terminal singularities. Suppose 
\begin{enumerate}

\item there exists a resolution $\pi: Y \rightarrow X$ such that  $K_Y$ is $\pi$-nef, 

\smallskip

\item $\omega\in H^{1,1}(X, \mathbb{R})\cap H^2(X, \mathbb{Q})$ is a singular K\"ahler metric in $\mathcal{V}(X, \theta_X, n, A, p, K)$, 

\smallskip
\item $\ric(\omega) +\omega$ is a non-negative smooth closed $(1,1)$-form on $X$. 
\end{enumerate}
Then  

\begin{enumerate}

\item the metric measure space $(\hat X, d_\omega, \omega^n)$ (as in \eqref{mmsp}) is a  compact RCD space homeomorphic to $X$, 

\smallskip

\item $\cS(\hat X) = \cS(X)$ has Hausdorff dimension no great than $2n-4$.

\smallskip

\item There exists $c>0$ such that 
\begin{equation}\label{schw7}
\omega\geq c \theta_X. 
\end{equation}
In particular, the identity map $\iota: (\hat X, d_\omega) \rightarrow (X, \theta_X)$ is Lipschitz.

\end{enumerate}

\end{theorem}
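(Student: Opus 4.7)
The plan is to recognize assumption (3) as placing $\omega$ in the setting of a singular twisted K\"ahler-Einstein equation. Setting $\eta := \ric(\omega) + \omega$, which is a smooth closed non-negative $(1,1)$-form on $X$ by hypothesis, we see that $\omega$ satisfies $\ric(\omega) = -\omega + \eta$, equivalently a complex Monge-Amp\`ere equation $(\omega_X + \ddbar \varphi)^n = e^{\varphi}\Omega$ for a smooth adapted volume $\Omega$ as in Section 5. Combined with the $\pi$-nefness hypothesis (1), this places us in the scope of Proposition \ref{regul}: there is a regularization by smooth twisted cscK metrics $\omega_{\epsilon_j}$ on $Y$ with $[\omega_{\epsilon_j}] \to [\pi^*\omega]$, satisfying the uniform Nash entropy bound (\ref{nashb1}) and the uniform Calabi energy bound (\ref{scb1}). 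The positivity $\eta \geq 0$ additionally supplies the global lower Ricci bound $\ric(\omega) \geq -\omega$ on $X^\circ$.

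With the regularization in hand, Theorem \ref{thm:main2} applies verbatim and simultaneously delivers that $(\hat X, d_\omega, \omega^n)$ is a non-collapsed compact ${\rm RCD}(-1, 2n)$ space and that $\omega \geq c\,\theta_X$ for some $c>0$. The latter implies the identity map $(X^\circ, \omega) \to (X^\circ, \theta_X)$ is distance non-increasing up to a constant, hence extends to a Lipschitz surjection $\iota : (\hat X, d_\omega) \to (X, d_{\theta_X})$. This settles part (3) and already provides one direction of the homeomorphism asserted in part (1).

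The next step is to upgrade $\iota$ to a homeomorphism. Here the rationality $[\omega] \in H^{1,1}(X,\mathbb{R}) \cap H^2(X, \mathbb{Q})$ becomes essential. After passing to a multiple we may write $k[\omega] = c_1(L)$ for an ample $\mathbb{Q}$-line bundle $L$ on $X$. Along the smooth approximations $\omega_{\epsilon_j}$, one derives a partial $C^0$-estimate following the scheme of \cite{Sz24} and its refinement in \cite{FGS2}, producing a Kodaira-type projective embedding of the limit space $\hat X$ compatible with the embedding of $X$ by $|mL|$ for large $m$. Since the pluri-sections of $L^{\otimes m}$ separate points of $X$, this forces $\iota$ to be injective, hence a homeomorphism. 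The argument simultaneously identifies $\cR(\hat X)$ with the smooth locus $X^\circ$, so $\cS(\hat X) = \cS(X)$.

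Finally, for the sharpened dimension bound $\dim_{\mathcal{H}}\cS(\hat X) \leq 2n - 4$ in part (2), we exploit that $\eta = \ric(\omega) + \omega$ is globally smooth on $X$, which gives $\ric(\omega)$ a two-sided bound on $X^\circ$ once $\omega$ is known to be locally bounded away from $\cS(X)$ (which follows from the Schwarz estimate). This propagates to uniform two-sided Ricci bounds for $\omega_{\epsilon_j}$ on compacts of $\pi^{-1}(X^\circ)$, so $\hat X$ is realized as a non-collapsed RCD limit of smooth manifolds with two-sided Ricci control. The codimension-$4$ regularity theorem in this non-collapsed RCD setting (the synthetic analogue of the Cheeger-Naber theorem under two-sided Ricci bounds) then gives the stated bound, replacing the weaker codimension-$3$ estimate of the general Theorem \ref{thm:main3}. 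The main obstacle is the injectivity step of $\iota$: one must transfer the algebraic/holomorphic separation of $X$ through the degenerating family $\{\omega_{\epsilon_j}\}$ into the purely metric object $\hat X$, which is precisely where the partial $C^0$ machinery of \cite{Sz24, FGS2} becomes indispensable.
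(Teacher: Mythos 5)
Your proposal follows essentially the same approach as the paper. The paper's own proof of Theorem \ref{specthm3} sets $\eta = \ric(\omega)+\omega$, invokes Proposition \ref{regul} for the twisted cscK regularization (using the $\pi$-nefness of $-K_Y$), appeals to Proposition \ref{schw1} and Corollary \ref{rcdsp} for the Schwarz estimate and the RCD property, and then cites the partial $C^0$ argument from \cite{Sz24} (together with the two-sided Ricci bound available on $X^\circ$) for the homeomorphism and the codimension-$4$ estimate. Your proposal deploys precisely the same ingredients; the only cosmetic difference is that you route the RCD and Schwarz conclusions through Theorem \ref{thm:main2} rather than citing Corollary \ref{rcdsp} and Proposition \ref{schw1} directly, and you are arguably more careful about the logical ordering (first obtain the regularization via Proposition \ref{regul}, then apply the Schwarz lemma, since the latter's hypothesis requires a regularization to already exist -- the paper as written invokes Proposition \ref{schw1} a line before establishing that the regularization exists, which is an ordering that you correctly straighten out). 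Your elaboration of the partial $C^0$/homeomorphism step and the remark that two-sided Ricci control yields the sharpened codimension-$4$ bound match what the paper leaves implicit by citation to \cite{Sz24}.
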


 \begin{proof} Let $\eta = \ric(\omega) + \omega$. Then by assumption $\eta\geq 0$ is a smooth closed $(1,1)$-form. By Proposition \ref{schw1}, $\omega$ dominates $\theta_X$, i.e.,  there exists $C>0$ such that 
 $$- \omega \leq \ric(\omega) \leq C \omega.$$ 
In particular,  the twisted K\"ahler-Einstein equation $ \omega$ satisfies the assumptions of Proposition \ref{regul}. Thefore there exist a regularization $(Y, \{\omega_ j \}_{j=1}^\infty)$ for $(X, \omega_i)$ satisfying \eqref{cls2},  \eqref{nashb2} and \eqref{ricp2}. 
Corollary \ref{rcdsp} implies that $\overline{(X^\circ, \omega)}$ is a compact non-collapsed ${\rm RCD}(-1, 2n)$ space with bounded Ricci curvature. 

It remains to prove that $\hat X$ is homeomorphic to the original variety $X$ with the dimension estimate for the singular set.  This can be proved by the same argument in \cite{Sz24} for K\"ahler-Einstein RCD spaces based on the partial $C^0$-estimates developed in \cite{T1, DS1, CDS2, LS}, because the Ricci curvature is uniformly bounded on $X^\circ$.   
 \end{proof}


 \section{Proof of Theorem \ref{thm:main3}: the general case}

In this section, we will prove Theorem \ref{thm:main3} in full generality following the approximation approach in \cite{FGS2}. Suppose $\omega$ is a singular K\"ahler metric satisfying the assumptions of Theorem \ref{thm:main3}. 
We let $$\eta = \ric(\omega) + \omega \geq 0,$$ 
which is not necessarily smooth as considered in Theorem \ref{specthm3}. 
 If $\eta$ is not strictly positive,  we can let $\omega' = 2\omega $ and $\eta' = \omega + \eta$, then 
$\ric(\omega') = - \omega' + \eta' $ and $\eta'$ is indeed a K\"ahler current by Proposition \ref{schw1}. Therefore we can always assume $\eta$ is K\"ahler current that dominates the smooth K\"ahler metric $\theta_X$.

We will pick a smooth K\"ahler metric $\eta_0 \in [\eta]$ and let 
$$\eta=\eta_0 +\ddbar\psi, ~\psi \in PSH(X, (1+\varepsilon) \eta_0)\cap C^\infty(X^\circ).$$
 for some $\varepsilon>0$. 
The following regularization lemma is a special case proved in \cite{FGS2}.
\begin{lemma}  There exists a sequence of $\psi_j\in C^\infty(X) \cap PSH(X, \eta_0)$ such that $\psi_j \geq \psi$ and $\psi_j$ converges pointwise to $\psi$. Furthermore, $\psi_j$ converges to $\psi$ smoothly on any compact subset of $X^\circ$. 
\end{lemma}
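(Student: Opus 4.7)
The goal is to produce smooth $\eta_0$-psh approximations $\psi_j$ on $X$ decreasing to $\psi$, converging smoothly on $X^\circ$. The crucial asset is the strict positivity encoded in $\psi\in PSH(X,(1+\varepsilon)\eta_0)$: this $\varepsilon$-cushion must absorb all the loss of positivity that inevitably arises when one regularizes on a singular space. The plan is therefore to run Demailly's regularization locally in ambient embeddings, use $\varepsilon$ to correct all mollification errors, and glue globally by a regularized maximum.

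First, cover $X$ by finitely many open sets $U_\alpha$ each equipped with a local holomorphic embedding $\iota_\alpha:U_\alpha\hookrightarrow V_\alpha\subset\mathbb{C}^{N_\alpha}$, together with a smooth Kähler form $\tilde\eta_{0,\alpha}$ on $V_\alpha$ restricting to $\eta_0$ on $\iota_\alpha(U_\alpha)$. Since $\psi$ is bounded and $(1+\varepsilon)\eta_0$-psh, a standard extension argument (taking the maximum of $\psi$ viewed in $V_\alpha$ with a large constant times a local negative psh function vanishing on $\iota_\alpha(U_\alpha)$) produces a $(1+\varepsilon)\tilde\eta_{0,\alpha}$-psh extension $\tilde\psi_\alpha$ of $\psi$ to $V_\alpha$.

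Second, convolve $\tilde\psi_\alpha$ with a standard mollifier $\rho_\delta$ on $\mathbb{C}^{N_\alpha}$ to produce smooth approximations $\tilde\psi_{\alpha,\delta}\searrow\tilde\psi_\alpha$ on shrunken opens $V_\alpha'\subset\subset V_\alpha$. Because $\tilde\eta_{0,\alpha}$ has bounded variation on compact sets, the convolved functions are $((1+\varepsilon)-C\delta)\tilde\eta_{0,\alpha}$-psh on $V_\alpha'$; for $\delta$ small, they are in particular $(1+\varepsilon/2)\tilde\eta_{0,\alpha}$-psh, and their restrictions to $\iota_\alpha(U_\alpha)$ are smooth $(1+\varepsilon/2)\eta_0$-psh functions on $U_\alpha\subset X$. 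By standard properties of mollification, these restrictions decrease pointwise to $\psi|_{U_\alpha}$ as $\delta\to 0$, and converge smoothly on $U_\alpha\cap X^\circ$ since $\psi\in C^\infty(X^\circ)$.

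Finally, glue the local smoothings using Demailly's regularized maximum function. After adjusting the approximations by small vanishing constants so that the pieces are compatible with a subordinate partition of unity, set $\psi_j:=\max{}_{\eta_j}(\tilde\psi_{\alpha_1,\delta_j}\circ\iota_{\alpha_1},\ldots,\tilde\psi_{\alpha_m,\delta_j}\circ\iota_{\alpha_m})+c_j$, with $\eta_j,c_j\to 0^+$. The regularized max preserves plurisubharmonicity, and the remaining positivity $\varepsilon/2$ absorbs the small error caused by replacing ordinary max by its smooth version. This produces globally defined smooth functions $\psi_j\in C^\infty(X)\cap PSH(X,\eta_0)$ with $\psi_j\geq \psi$ and $\psi_j\searrow\psi$; smooth convergence on compact subsets of $X^\circ$ follows because all constituent operations (mollification, regularized max of smoothly converging inputs) preserve smooth convergence there. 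The main obstacle is to arrange everything coherently at the singular locus of $X$: extensions across the embedded variety, compatibility of the local convolutions on overlaps $U_\alpha\cap U_\beta$, and smoothness of the gluing in the sense of the local embeddings. The initial strict $\varepsilon$-positivity, together with the fact that only finitely many charts are needed, is precisely what makes this bookkeeping tractable.
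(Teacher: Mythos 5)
Your proposal correctly identifies extension to an ambient space, mollification there, and restriction back to $X$ as the source of smooth $\eta_0$-psh upper approximations, and correctly flags the $\varepsilon$-cushion in $\psi\in PSH(X,(1+\varepsilon)\eta_0)$ as the budget for positivity losses. But there is a genuine gap at the step where you assert that the mollified extensions ``converge smoothly on $U_\alpha\cap X^\circ$ since $\psi\in C^\infty(X^\circ)$.'' This does not follow. The psh extension $\tilde\psi_\alpha$ to the ambient $V_\alpha\subset\mathbb{C}^{N_\alpha}$ is only psh — it has no smoothness in directions transverse to $\iota_\alpha(X)$, even over the regular locus $X^\circ$ where $\psi$ itself is smooth. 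The convolution $\rho_\delta*\tilde\psi_\alpha$ averages $\tilde\psi_\alpha$ over a full ambient ball, so its restriction to $\iota_\alpha(X^\circ)$ is not a mollification of $\psi$ alone, and standard convolution estimates give only uniform ($C^0$) convergence on compact subsets of $X^\circ$, not convergence of derivatives. Nothing in your construction repairs this: the regularized-maximum gluing you invoke is between the various chart-wise mollifications, which all suffer the same defect.

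The paper is explicit that the extension-plus-smoothing step (done globally via a projective embedding using $[\eta]\in H^2(X,\mathbb{Q})$ and the Coman--Guedj--Zeriahi extension theorem) yields only that $\psi_i\to\psi$ decreasingly and in $L^\infty_{\rm loc}(X^\circ)$. The additional idea you are missing is the second gluing: take a section-based $\eta_0$-psh function $\phi=\frac1m\log\sum|\sigma_k|^2_{h_0}$ that is smooth on $X^\circ$ and tends to $-\infty$ along $\mathcal S(X)$, and form the regularized maximum $\mathcal M_\epsilon(\psi_i,\,\psi+\delta+\delta^2\phi)$. Near $\mathcal S(X)$ the second slot drops to $-\infty$, so the regularized max coincides with the globally smooth $\psi_i$; on a fixed compact $K\subset\subset X^\circ$, once $\|\psi_i-\psi\|_{L^\infty(K)}<\delta-\epsilon$, the regularized max coincides with $\psi+\delta$ exactly. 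Choosing $\epsilon_i,\delta_i\to0$ appropriately then produces global smooth $\eta_0$-psh functions that equal $\psi+\delta_i$ on exhausting compacts of $X^\circ$, which is what gives the $C^\infty_{\rm loc}(X^\circ)$ convergence. Without this device, you only obtain uniform convergence on $X^\circ$, which is not what the lemma claims and is not enough for the subsequent stability and Chern--Lu arguments in the paper.
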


\begin{proof} Since $[\eta] \in H^2(X, \mathbb{Q})$ is ample, we can choose $\eta_0$ to be the restriction of a smooth K\"ahler metric $\tilde \eta_0 $ on $\mathbb{CP}^N$ for some holomorphic embedding $\iota: X \rightarrow \mathbb{CP}^N$. Then for any $\eta_0$-PSH function on $X$, it can be extended to a $\tilde \eta_0$-PSH function on $\mathbb{CP}^N$ by \cite{CG}. Immediately, there exists $\psi_i \in PSH(X, \eta_0) \cap C^\infty(X)$ such that $\psi_i$ converge to $\psi$ decreasingly. Since $\psi$ is smooth on $X^\circ$, $\psi_i$ converges to $\psi$ in $L^\infty(K)$ for any $K\subset\subset X^\circ$. 

Let $L \rightarrow X$ be the ample $\mathbb{Q}$-line bundle with $\eta \in c_1(L)$. Let $\mathcal{J}$ be the ideal sheaf associated to the singular set $\cS(X)$. Then $mL \otimes \mathcal{J}$ is globally generated for sufficiently large $m\in \mathbb{Z}^+$. Let $h_0$ be the smooth hermitian metric for $L$ with $\ric(h_0) = \eta_0$ and $\sigma_0, ..., \sigma_{N_m}$ be a basis for $H^0(X, mL\otimes \mathcal{J})$. We let $\phi = \frac{1}{m} \log \left( \sum_{k=0}^{N_m} |\sigma_j|^2_{h_0}\right)$. Then $\phi \in PSH(X, \eta_0)\cap C^\infty(X^\circ)$ with $\phi$ tending to $-\infty$ near $\cS(X)$.  

Let $\tilde \psi_{i, \epsilon, \delta} = \mathcal{M}_{\epsilon} (\psi_i, \psi + \delta + \delta^2 \phi)$ be the regularized maximum of $\psi_i$ and $\psi+\delta + \delta^2 \phi$  for $\delta, \epsilon>0$. By definition, we have $\tilde\psi_{i, \epsilon, \delta}\in PSH(X, \eta_0)$ with $\tilde\psi_{i, \epsilon, \delta}\geq \psi_i\geq \psi$. Furthermore, $\tilde\psi_{i, \epsilon, \delta} \in C^\infty(X)$ since $\tilde \psi_{i, \epsilon, \delta} = \psi_i$ near $\cS(X)$ for sufficiently small $\epsilon>0$. 

For any $K\subset\subset X^\circ$,  $\tilde \psi_{i, \epsilon, \delta}= \psi  + \delta$ on $K$ for sufficiently large $i>0$ and sufficiently small $\delta>>\epsilon$, since $\psi_i$ converges to $\psi$ uniformly in $L^\infty(K)$.  At the same time, $\tilde \psi_{i, \epsilon, \delta} = \psi_i$ near $\cS(X)$ as $\psi+\delta + \delta^2 \phi$ tends to $-\infty$ along $\cS(X)$. By choosing suitable $\epsilon_i, \delta_i \rightarrow 0$, $\tilde \psi_{i, \epsilon_i, \delta_i}$ converges to $\psi$ smoothly on any fixed compact subset of $X^\circ$. This proves the lemma.
\end{proof}

We let $\eta_i = \eta_0+ \ddbar \psi_i$. Then $\alpha_i$ is sequence of smooth K\"ahler metrics on $X$. 
We can consider the twisted K\"ahler-Einstein equation
\begin{equation} \label{tkeqn}
\ric(\omega_i) = - \omega_i + \eta_i. 
\end{equation}
Let $\Omega$ be a smooth adaptive volume measure on $X$ such that 
$$\ric(\Omega) = -\omega_0 + \eta_0. $$
Then the   complex Monge-Amp\`ere equation equivalent to (\ref{tkeqn})  is given by 
\begin{equation}\label{tkema2}
(\omega_0 + \ddbar \varphi_i)^n = e^{\varphi_i  -\psi_i} \Omega.
\end{equation}
From the construction,  $-\psi_i \leq - \psi$  and $-\psi_i$ converges to $\psi$ pointwise. Since $\int_X e^{\varphi_i - \psi_i} \Omega = [\omega_0]^n$, $\varphi_i \in PSH(X, \omega_0)$ is uniformly bounded above.  Hence 
$$\omega_i = \omega_0 +\ddbar \varphi_i \in \mathcal{V}(X, \theta_X, n, A, p, K')$$
 for some  $K'>0$  for all $i>0$. Therefore $(\hat X_i, d_{\omega_i}, (\omega_i)^n)$ will satisfy the assumptions of Theorem \ref{specthm3}. In particular, $\hat X_i = X$.

\begin{lemma} There exists $C>0$ such that for all $i>0$, 
\begin{equation}\label{estsec81}
\|\varphi_i\|_{L^\infty(X)} \leq C, ~\mathrm{Diam}(\overline{(X^\circ, \omega_i)}) \leq C
\end{equation}
\begin{equation} \label{estsec82}
\omega_i \geq C^{-1} \theta_X.
\end{equation}

\end{lemma}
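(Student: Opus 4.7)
The plan is to derive the three estimates in sequence, with uniformity in $i$ maintained throughout. As a preliminary, I would show that $\psi_i$ is uniformly bounded in $L^\infty(X)$: the lower bound $\psi_i \geq \psi$ from the preceding regularization lemma, combined with the boundedness of $\psi$ (which follows from $\eta$ having bounded local potentials, a consequence of the entropy hypothesis on $\omega$ and Definition \ref{riclbdef}), gives the uniform lower bound, while the construction of $\psi_i$ as a regularized maximum of uniformly bounded smooth $\eta_0$-psh approximants supplies the uniform upper bound.

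Next I would establish $\|\varphi_i\|_{L^\infty(X)} \leq C$. By the maximum principle applied to $(\omega_0 + \ddbar \varphi_i)^n = e^{\varphi_i - \psi_i}\Omega$, combined with the fact that $\omega_0^n/\Omega$ is bounded above on $X$ under the log terminal hypothesis, one gets $\sup_X \varphi_i \leq \sup_X \psi_i + C$ (with an approximation near $\mathcal{S}(X)$ using a psh barrier if the sup is attained at a singular point). This then makes the density $F_i = e^{\varphi_i - \psi_i}\Omega/\omega_0^n$ uniformly bounded in $L^p(X,\omega_0^n)$ for some $p > 1$, and the Kolodziej / Guo--Phong--Tong type $L^\infty$-estimate \cite{Ko1, GPT} for Monge--Amp\`ere equations on K\"ahler varieties with log terminal singularities yields $\|\varphi_i\|_{L^\infty(X)} \leq C$ uniformly.

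For the diameter bound, the uniform $L^\infty$ control on $\varphi_i - \psi_i$ implies that $\log(\omega_i^n/\theta_X^n) = (\varphi_i - \psi_i) + \log(\Omega/\theta_X^n)$ has uniformly bounded first summand, while $\log(\Omega/\theta_X^n) \in L^{p'}(X, \theta_X^n)$ for some $p' > n$ by the log terminal hypothesis. Since $[\omega_i] = [\omega_0]$ is fixed, this places $\omega_i \in \mathcal{V}(X, \theta_X, n, A, p', K')$ with constants independent of $i$, and the diameter estimates from \cite{GPSS1} for metrics in $\mathcal{V}$ then give the uniform bound on $\mathrm{Diam}(\overline{(X^\circ, \omega_i)})$.

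Finally, for $\omega_i \geq C^{-1}\theta_X$ I would apply Proposition \ref{schw1} to each $\omega_i$: the Ricci lower bound $\ric(\omega_i) = -\omega_i + \eta_i \geq -\omega_i$ on $X^\circ$ follows from the twisted K\"ahler--Einstein equation, and the regularization hypothesis is supplied by Proposition \ref{regul} applied to $\omega_i$, since $-K_Y$ is $\pi$-nef and $\eta_i$ is a smooth non-negative closed $(1,1)$-form on $X$. The principal obstacle is ensuring that the constant $c$ of Proposition \ref{schw1} is uniform in $i$; this requires that the Nash entropy and Calabi energy bounds for the twisted cscK approximants $\{\omega_{i,j}\}_j$ produced by Proposition \ref{regul} depend only on $\|\varphi_i\|_{L^\infty}$, $\|\psi_i\|_{L^\infty}$, and the fixed cohomology class $[\omega_0]$, all of which are controlled uniformly by the preceding steps. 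Granted this uniform regularization, Proposition \ref{schw1} delivers a single constant $c$ independent of $i$, completing the proof.
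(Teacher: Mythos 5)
Your approach contains two genuine gaps that the paper's argument is specifically constructed to avoid.

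First, the preliminary claim that $\psi_i$ is uniformly bounded in $L^\infty(X)$ is incorrect. The regularization lemma only gives $\psi_i \geq \psi$ and smoothness; $\psi_i$ is therefore uniformly bounded \emph{above} (it is $\eta_0$-PSH with $\sup$ normalized), but there is no uniform lower bound. The potential $\psi$ itself is generally \emph{not} bounded below: since $\eta = \ric(\omega) + \omega$, its potential incorporates a term proportional to $-\log(\omega^n/\Omega)$, and the hypotheses only place $\log(\omega^n/\theta_X^n)$ in a Nash-entropy $L^p$ class, not in $L^\infty$. In fact the whole point of the approximation in this section is that $\eta_i = \eta_0 + \ddbar\psi_i$ is a smooth K\"ahler form \emph{degenerating} to the singular current $\eta$, so $\|\psi_i\|_{L^\infty}$ cannot stay bounded. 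The paper's text establishes only a uniform \emph{upper} bound on $\varphi_i$ (from the integral normalization $\int_X e^{\varphi_i - \psi_i}\Omega = [\omega_0]^n$ plus $\psi_i \leq C$), and then places $\omega_i$ in a fixed class $\mathcal{V}(X,\theta_X,n,A,p,K')$, from which both $\|\varphi_i\|_{L^\infty}$ and the diameter bound follow directly from \cite{GPSS1,GPSS2}.

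Second, and more seriously, your route to \eqref{estsec82} via Proposition \ref{schw1} applied to each $\omega_i$ does not yield a constant independent of $i$. Proposition \ref{schw1} requires a regularization of $\omega_i$ with a uniform Calabi energy bound $B$, and the constant $c$ it produces depends on that $B$. The regularization comes from Proposition \ref{regul} applied to the twisted K\"ahler--Einstein metric $(\omega_i, \eta_i)$, and the Calabi energy bound produced there is controlled by comparing $\tr_{\omega_\epsilon}(\pi^*\eta_i)$ with $\tr_{\omega_\epsilon}(\pi^*\theta_X)$, i.e.\ it needs $\eta_i \leq C\theta_X$. Because $\eta_i$ degenerates to a K\"ahler current (with unbounded potentials) as $i\to\infty$, the best such comparison constant blows up, so the Calabi energy bounds $B_i$ for the regularizations are \emph{not} uniform in $i$, and the constant $c_i$ from Proposition \ref{schw1} is correspondingly non-uniform. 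The paper explicitly acknowledges this: it invokes Theorem \ref{specthm3}/Proposition \ref{schw1} only to get some $c_i > 0$ depending on $i$, and then runs a \emph{separate}, direct maximum-principle argument to upgrade to a uniform constant. That argument considers
\[
H_{i,\epsilon} = \log\tr_{\omega_i}(\theta_X) - A\varphi_i + \epsilon\log|\sigma_D|^2_{h_D},
\]
where $D$ is a divisor containing $\mathcal{S}(X)$; the barrier $\epsilon\log|\sigma_D|^2_{h_D}$ forces the maximum of $H_{i,\epsilon}$ into the smooth locus $X\setminus D$ where $\omega_i$ is smooth and the classical maximum principle applies. The Chern--Lu inequality then gives $\Delta_{\omega_i}H_{i,\epsilon} \geq \tr_{\omega_i}(\theta_X) - CA$ with $A$, $C$ independent of $i$ and $\epsilon$, because the only geometric input is $\ric(\omega_i) = -\omega_i + \eta_i \geq -\omega_i$ (uniform since $\eta_i \geq 0$) and the uniform $L^\infty$ bound on $\varphi_i$ just established. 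This is a structurally simpler and genuinely different argument from the Proposition \ref{schw1} route, and it is precisely what is needed to obtain uniformity when the twist forms $\eta_i$ are degenerating.
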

 
 \begin{proof} Since $\omega_i   \in \mathcal{V}(X, \theta_X, n, A, p, K')$, (\ref{estsec81}) follows immediately from \cite{GPSS1, GPSS2} due to the uniform Nash entropy bound. To achieve (\ref{estsec82}), we will apply the Chern-Lu argument one more time. For each $i>0$, there exists $c_i>0$ such that 
 $$\omega_i \geq c_i \theta_X$$
 by (\ref{schw7}) in Theorem \ref{specthm3}. 
 Let $D$ be an effective divisor on $X$ whose support contains the singular set of $X$. Let $\sigma_D$ be the defining section of $D$ and $h_D$ be a smooth hermitian metric for the line bundle associated to $D$. We consider 
 $$H_{i, \epsilon} = \log \tr_{\omega_i}(\theta_X) - A \varphi_i + \epsilon \log |\sigma_D|^2_{h_D}$$
 for small $\epsilon>0$. 
 $H_{i, \epsilon}$ must attain its maximum in $X\setminus D$. Straightforward calculations show that
 $$\Delta_{\omega_i} H_{i, \epsilon} \geq  \tr_{\omega_i} (\theta_X) - CA$$
in $X\setminus D$ for a fixed sufficiently large $A>0$ and a uniform constant $C$ that are both independent of $i$ and $\epsilon$ because $\ric(\omega_i) \geq - \omega_i$ for all $i$. One can apply the maximum principle to $H_{i, \epsilon}$ since $H_{i, \epsilon}$ is smooth on $X\setminus D$, which leads to the uniform upper bound independent on $i$ and $\epsilon$ for $H_{i, \epsilon}$ . The estimate (\ref{estsec82}) is then obtained after letting $\epsilon \rightarrow 0$ since $\varphi_i$ is uniformly bounded. 
 \end{proof}

\begin{corollary} \label{2ndord7} For any compact $K\subset\subset X^\circ$, there exists $C>0$ such that for all $i$, 
$$ C^{-1} \theta_X \leq \omega_i \leq C \theta_X. $$

\end{corollary}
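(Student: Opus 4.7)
The plan is to combine the uniform lower bound $\omega_i \geq c\theta_X$ from \eqref{estsec82} with a uniform two-sided bound on the volume-form ratio $\omega_i^n/\theta_X^n$ over the compact set $K \subset\subset X^\circ$. The matching upper bound $\omega_i \leq C\theta_X$ then follows from a one-line linear-algebra argument applied pointwise on $K$.

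First I would control the Monge-Amp\`ere volume. On any compact $K\subset\subset X^\circ$, the adapted smooth volume measure $\Omega$ is a smooth positive volume form and hence satisfies $c_1 \theta_X^n \leq \Omega \leq c_2 \theta_X^n$ for constants $c_1,c_2>0$ depending only on $K$. The Monge-Amp\`ere equation \eqref{tkema2} reads $\omega_i^n = e^{\varphi_i - \psi_i}\,\Omega$; combined with the uniform $L^\infty$-bound on $\varphi_i$ from \eqref{estsec81} and the fact that $\psi_i$ converges smoothly (hence uniformly) to the smooth function $\psi$ on $K$, this yields
\[
C_K^{-1}\,\theta_X^n \leq \omega_i^n \leq C_K\,\theta_X^n \quad \text{on } K,
\]
for some $C_K>0$ independent of $i$.

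Now let $\lambda_1(x)\leq \cdots \leq \lambda_n(x)$ denote the eigenvalues of $\omega_i$ with respect to $\theta_X$ at $x\in K$. Estimate \eqref{estsec82} provides the uniform lower bound $\lambda_1(x)\geq c>0$, while the preceding volume bound gives $\prod_{j=1}^n \lambda_j(x) = \omega_i^n/\theta_X^n \leq C_K$. Combining these two inequalities immediately produces $\lambda_n(x) \leq C_K/c^{n-1}$, which is exactly the required upper bound $\omega_i \leq C\theta_X$ on $K$. There is essentially no substantial obstacle here: the corollary is a direct consequence of the global Schwarz-type lower bound \eqref{estsec82} together with the elementary observation that the Monge-Amp\`ere density $\omega_i^n/\theta_X^n$ is uniformly controlled on any compact subset of the smooth locus $X^\circ$.
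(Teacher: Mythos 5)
Your proof is correct and follows exactly the same route as the paper: the lower bound $\omega_i\geq c\,\theta_X$ from \eqref{estsec82}, together with the uniform control of $\varphi_i-\psi_i$ on $K$ feeding into the Monge--Amp\`ere equation \eqref{tkema2}, gives a two-sided bound on the volume ratio, and the elementary eigenvalue argument then delivers the upper bound $\omega_i\leq C\,\theta_X$. You have simply spelled out the linear-algebra step that the paper's one-line proof leaves implicit.
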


\begin{proof} The corollary follows immediately from (\ref{estsec82}) and the complex Monge-Amp\`ere equation (\ref{tkema2}) as $\varphi_i - \psi_i$ is uniformly bounded on $K$.
\end{proof}

\begin{lemma} For any compact $K\subset\subset X^\circ$ and $k>0$, 
$$\|\varphi_i - \varphi\|_{C^k(K)} =0. $$

\end{lemma}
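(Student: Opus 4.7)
The statement should read $\|\varphi_i-\varphi\|_{C^k(K)}\to 0$ as $i\to\infty$, and my plan is to establish this by combining uniform interior elliptic estimates for the Monge--Amp\`ere equation with a uniqueness argument for the limiting potential. The key inputs are already in hand: Corollary \ref{2ndord7} gives $C^{-1}\theta_X\le\omega_i\le C\theta_X$ on any compact $K\subset\subset X^\circ$; the $L^\infty$ bound on $\varphi_i$ is uniform; and $\psi_i\to\psi$ smoothly on $K$. Together these convert \eqref{tkema2} into a uniformly elliptic complex Monge--Amp\`ere equation on $K$ with smooth coefficients and smoothly convergent right-hand sides.

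First, I would fix a slightly smaller $K'\subset\subset K\subset\subset X^\circ$ and work on $K'$. The two-sided bound on $\omega_i$ yields a uniform $C^{1,1}$-type bound on $\varphi_i$ relative to $\theta_X$. Applying Evans--Krylov to the Monge--Amp\`ere equation \eqref{tkema2} (or equivalently, Calabi's $C^3$ estimate followed by Schauder) then produces a uniform $C^{2,\alpha}(K')$ bound on $\varphi_i$ independent of $i$. Since the right-hand side $e^{\varphi_i-\psi_i}\Omega$ has its $\psi_i$-factor converging smoothly on $K'$ and $\varphi_i$ uniformly bounded, standard Schauder bootstrap of the linearised equation gives uniform $C^k(K'')$ bounds on $\varphi_i$ for every $k$ and every $K''\subset\subset K'$.

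Next I extract a limit. Since $\{\varphi_i\}\subset PSH(X,\omega_0)\cap L^\infty(X)$ is uniformly bounded, by standard pluripotential compactness a subsequence $\varphi_{i_j}$ converges in $L^1(X)$ to some $\varphi_\infty\in PSH(X,\omega_0)\cap L^\infty(X)$. Combined with the interior $C^k$ bounds above and Arzel\`a--Ascoli, the same subsequence converges in $C^\infty_{\mathrm{loc}}(X^\circ)$ to $\varphi_\infty$, so in particular $\varphi_\infty\in C^\infty(X^\circ)$ and satisfies
\begin{equation*}
(\omega_0+\ddbar\varphi_\infty)^n=e^{\varphi_\infty-\psi}\Omega
\end{equation*}
on $X^\circ$. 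Because $\varphi_\infty$ is bounded and $\varphi_\infty-\psi\in L^\infty_{\mathrm{loc}}$ away from $\cS(X)$, this identity extends as a Monge--Amp\`ere equation of bounded densities globally on $X$ in the pluripotential sense.

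The last, and main, step is to identify $\varphi_\infty$ with $\varphi$. This is where the monotone coefficient $+\varphi$ on the right-hand side is essential: both $\varphi_\infty$ and $\varphi$ are bounded $\omega_0$-plurisubharmonic solutions of the same equation $(\omega_0+\ddbar u)^n=e^{u-\psi}\Omega$, and such solutions are unique by the comparison principle of Ko{\l}odziej/Dinew (if $\varphi_\infty>\varphi$ at some point, then on the open sublevel set $\{\varphi_\infty>\varphi\}$ the measures $e^{\varphi_\infty-\psi}\Omega$ and $e^{\varphi-\psi}\Omega$ are strictly ordered, contradicting the comparison principle). Hence $\varphi_\infty=\varphi$, the limit does not depend on the subsequence, and the full sequence $\varphi_i$ converges to $\varphi$ in $C^k(K)$ for every $k$ and every compact $K\subset\subset X^\circ$. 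The main obstacle is packaged in this uniqueness step, but it is a standard consequence of pluripotential comparison once $\varphi_\infty$ is known to be bounded and $\omega_0$-plurisubharmonic globally.
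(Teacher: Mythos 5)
Your proposal follows essentially the same skeleton as the paper's proof --- interior elliptic regularity (via the two-sided metric bound of Corollary \ref{2ndord7}) to get uniform $C^k$ bounds on compact subsets of $X^\circ$, and then an identification of the limit potential --- but the two arguments diverge in how the limit is pinned down. The paper invokes the stability theorem for complex Monge--Amp\`ere equations \cite{Ko2, DZ}: since $e^{-\psi_i}\to e^{-\psi}$ in $L^1(X,\Omega)$, the solutions $\varphi_i$ converge to $\varphi$ in $L^\infty(X)$, and combining this with the interior $C^k$ compactness finishes the lemma. You instead extract a subsequential limit $\varphi_\infty$ by pluripotential compactness, verify it solves the limiting equation, and then identify $\varphi_\infty=\varphi$ by the comparison principle exploiting the monotone $e^{u}$ factor. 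These are logically equivalent routes: what the stability theorem packages as a black box, you unpack into compactness plus the domination principle. Your version is more self-contained but costs a bit more work to justify that the equation for $\varphi_\infty$ holds globally on $X$ in the pluripotential sense.

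One imprecision worth flagging: you assert that $e^{\varphi_\infty-\psi}\Omega$ has \emph{bounded} density, but $\psi$ (as a potential for a K\"ahler current) may tend to $-\infty$ along $\cS(X)$, so $e^{-\psi}$ need not be bounded. This does not sink the argument: what the global pluripotential identity and the comparison/domination principle actually require is that $\varphi_\infty$ be a bounded $\omega_0$-PSH function (so that $(\omega_0+\ddbar\varphi_\infty)^n$ does not charge the pluripolar set $\cS(X)$) and that the right-hand density $e^{\varphi_\infty-\psi}\Omega$ be in $L^1(X)$ (which follows from Fatou and $\int_X e^{\varphi_i-\psi_i}\Omega=[\omega_0]^n$). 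With that correction the uniqueness step goes through exactly as you describe, and the conclusion of the lemma follows.
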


\begin{proof} Given $K\subset\subset X^\circ$, $\psi_i$ converges to $\psi$ smootly on $K$. Therefore $\varphi_i$ is uniformly bounded in $C^k(K)$ for any $k>0$ by Schauder estimates and elliptic regularity for linear equations with the second order estimate in Corollary \ref{2ndord7}. Since $\psi_i$ converges to $\psi$ smoothly in $K$ and point-wisely on $X$, we have 
$$\lim_{i\rightarrow \infty} \left\| e^{- \psi_i}  - e^{-\psi} \right\|_{L^1(X, \Omega)} = 0. $$
The stability theorem for complex Monge-Amp\`ere equation \cite{Ko2, DZ} implies $$\lim_{i \rightarrow \infty} \| \varphi_i - \varphi\|_{L^\infty(X)} = 0. $$ The lemma immediately follows. 
\end{proof}

As a consequence, $(X^\circ, \omega_i)$ converges smoothly to $(X^\circ, \omega)$.

Let $(\hat X_i, d_{\omega_i}, \omega_i^n)$ be the metric measure space uniquely associated to $(X, \omega_i, \omega_i^n)$. By Theorem \ref{specthm3}, $(\hat X_i, d_{\omega_i}, \omega_i^n)$ is a non-collapsed compact ${\rm RCD}(-1, 2n)$ space homeomorphic to $X$ for all $i$. Furthermore, $\cR(\hat X_i) = X^\circ$.

\begin{lemma} $(\hat X_i, d_{\omega_i}, \omega_i^n)$ converges to $(\hat X, d_\omega, \omega^n)$ as $i\rightarrow \infty$. In particular,  $(\hat X, d_{\omega}, \omega^n)$ is a non-collapsed compact ${\rm RCD}(-1, 2n)$ space. 

\end{lemma}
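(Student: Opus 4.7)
The plan is to first put the family $\{(\hat X_i, d_{\omega_i}, \omega_i^n)\}_{i\ge 1}$ into a measured Gromov--Hausdorff precompact class of RCD spaces. By Theorem \ref{specthm3}, each $(\hat X_i, d_{\omega_i}, \omega_i^n)$ is a non-collapsed compact ${\rm RCD}(-1,2n)$ space homeomorphic to $X$; by \eqref{estsec81} the diameters are uniformly bounded; and since $\omega_i=\omega_0+\ddbar\varphi_i$ with $[\omega_0]=[\omega]$, the total volumes all equal $V_\omega$. By the compactness and stability theorems for non-collapsed ${\rm RCD}(K,N)$ spaces, a subsequence converges in the pointed mGH topology to a compact non-collapsed ${\rm RCD}(-1,2n)$ space $(Z,d_Z,\mu_Z)$.

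Next I would identify $(Z,d_Z,\mu_Z)$ with $(\hat X, d_\omega, \omega^n)$. Since $\varphi_i\to \varphi$ in $C^k_{\rm loc}(X^\circ)$, the K\"ahler forms $\omega_i$ converge smoothly to $\omega$ on compact subsets of $X^\circ$, and the Riemannian distance functions $d_{\omega_i}$ converge uniformly to $d_\omega$ on $K\times K$ for every $K\subset\subset X^\circ$. Combined with the uniform lower bound $\omega_i\ge c\,\theta_X$ from \eqref{estsec82}, this yields uniformly Lipschitz maps $\iota_i:(\hat X_i,d_{\omega_i})\to (X,d_{\theta_X})$ extending the identity on $X^\circ$. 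A standard diagonal argument on an exhaustion $K_\ell\nearrow X^\circ$ and $\epsilon$-nets inside each $K_\ell$ assembles the local smooth convergence into an isometric embedding $\Phi:\hat X\hookrightarrow Z$ with $\Phi_*\omega^n\le \mu_Z$, and the $\iota_i$ assemble into a Lipschitz map $\iota_\infty:Z\to(X,d_{\theta_X})$ extending $\iota\circ\Phi^{-1}$.

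To finish I would show $\Phi$ is surjective. The open set $\Phi(X^\circ)$ has full $\mu_Z$-measure, since $\mu_Z(Z\setminus \Phi(X^\circ))\le \liminf_i\omega_i^n(\cS(X))=0$. Any putative point $z\in Z\setminus\Phi(\hat X)$ arises as a mGH limit of $p_i\in \hat X_i$ with $\iota_i(p_i)\to x\in\cS(X)$; the uniform Lipschitz control on $\iota_i$ forces $d_Z(z,\cdot)$ to coincide with the induced distance computed through the completion $\hat X$, so $z$ already lies in $\Phi(\hat X)$. Hence $\Phi$ is onto and $(Z,d_Z,\mu_Z)=(\hat X,d_\omega,\omega^n)$; since the limit does not depend on the subsequence, the full sequence converges, and the limiting RCD structure transfers to $(\hat X,d_\omega,\omega^n)$.

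The main obstacle is the surjectivity step: one must exclude the possibility that $\cS(X)$ splits into several limit points of $Z$, or spawns new strata not seen in $\hat X$. All other ingredients --- mGH precompactness, stability of ${\rm RCD}(-1,2n)$, local smooth convergence of $\omega_i$ to $\omega$ on $X^\circ$, and vanishing of $\omega_i^n$ near $\cS(X)$ --- are by now fairly standard. The essential input making the identification possible is the uniform Schwarz-type bound $\omega_i\ge c\,\theta_X$ of \eqref{estsec82}: it produces uniformly Lipschitz maps $\iota_i:\hat X_i\to (X,\theta_X)$, which are what ultimately prevent $\cS(X)$ from contributing new points in the limit.
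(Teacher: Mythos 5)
Your identification step has a genuine gap. The assertion that $d_{\omega_i}$ converges uniformly to $d_\omega$ on $K\times K$ for each compact $K\subset\subset X^\circ$ does not follow from $C^k_{\rm loc}$-convergence of the metric tensors: an $\omega_i$-minimizing path between two points of $K$ may leave $K$ and approach $\cS(X)$, where $\omega_i$ has no uniform upper bound, so $d_{\omega_i}(p,q)$ could a priori be strictly smaller than the intrinsic $\omega$-distance between $p$ and $q$. What GH convergence actually yields is $d_{\omega_i}(p,q)\to d_Z(p,q)$, and one only has $d_Z|_{X^\circ\times X^\circ}\le d_\omega|_{X^\circ\times X^\circ}$: ruling out shortcuts through $Z\setminus X^\circ$ is precisely the substance of the identification. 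The map $\Phi:\hat X\to Z$ you build is therefore only $1$-Lipschitz, not an isometric embedding, and the subsequent surjectivity discussion cannot repair this, because the maps $\iota_i:(\hat X_i,d_{\omega_i})\to(X,d_{\theta_X})$ coming from \eqref{estsec82} point in the wrong direction --- they bound $d_{\theta_X}$ from above by $d_{\omega_i}$, which gives no lower bound on $d_Z$ in terms of $d_\omega$ near $\cS(X)$.

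The paper's mechanism is different and is what closes this gap. After noting that $Z\setminus X^\circ$ has $\mu_Z$-measure zero, the decisive input is that $Z\setminus X^\circ$ also has \emph{zero capacity}, a consequence of the uniform $L^\infty$ bounds on the local K\"ahler potentials of $\omega_i$ together with the fact that $\cS(X)$ is an analytic subvariety (cf.\ Lemma~\ref{cutoff}, following Sturm \cite{St}). This places $(Z,d_Z,\mu_Z)$ in the class of almost smooth compact metric measure spaces in the sense of Honda \cite{Ho}, whose metric measure structure is determined by the regular part; since the regular part is $(X^\circ,\omega,\omega^n)$, one concludes $Z=\overline{(X^\circ,\omega,\omega^n)}=\hat X$. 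Your measure-zero observation and your construction of the Lipschitz map $\iota_\infty$ are both fine, but neither supplies the zero-capacity input that actually forces $d_Z=d_\omega$ on $X^\circ$ and hence the identification.
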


\begin{proof} We first note that $\omega_i$ converge smoothly to $\omega$ on $X^\circ$ and the diameter of $(\hat X_i, d_{\omega_i}, \omega_i^n)$ is uniformly bounded. Therefore after possibly passing to a subsequence, $(\hat X_i, d_{\omega_i}, \omega_i^n)$ as  non-collapsed RCD spaces converge to a non-collapsed compact ${\rm RCD}(-1, 2n)$ space $(Y, d_Y, \mu_Y)$ whose regular part contains $X^\circ$ with local isometry to $(X^\circ, \omega, \omega^n)$. By the convergence of measures, $Y\setminus X^\circ$ has $0$ measure. Furthermore, since $\omega_i$ has uniformly bounded local potentials and $X\setminus X^\circ$ is an analytic subvariety, we can conclude that $Y\setminus X^\circ$ has $0$ capacity. In particular,  $(Y, d_Y, \mu_Y)$ is an almost smooth metric measure space introduced by Honda \cite{Ho}.   Therefore $(Y, d_Y, \mu_Y) =\overline{(X^\circ, \omega, \omega^n)}$ and the lemma is proved.
\end{proof}

\begin{lemma} \label{dengap}There exists $\epsilon=\epsilon>0$ such that for any $p\in \hat X_i\setminus X^\circ$ and $i>0$, 
$$\nu_{\hat X_i}(p) < 1- \epsilon. $$
\end{lemma}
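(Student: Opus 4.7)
The plan is to argue by contradiction, using the upper semicontinuity of the volume density under non-collapsed RCD convergence combined with the identification $\cR(\hat X_i) = X^\circ$ established in Theorem \ref{specthm3}. Suppose no such uniform $\epsilon>0$ exists. Then one can find a subsequence (still denoted by $i$) and points $p_i \in \hat X_i \setminus X^\circ = \cS(X)$ with $\nu_{\hat X_i}(p_i) \to 1$. Since $\cS(X)$ is a closed analytic subvariety of the compact variety $X$, after passing to a further subsequence we may assume $p_i \to p_\infty \in \cS(X)$ in $X$. The uniform bound $\omega_i \geq c\theta_X$ from Proposition \ref{schw1} applied to each $\omega_i$, together with the non-collapsed convergence $(\hat X_i, d_{\omega_i}, \omega_i^n) \to (\hat X, d_\omega, \omega^n)$ established in the previous lemma, upgrades this to pointed Gromov-Hausdorff convergence $(\hat X_i, p_i) \to (\hat X, p_\infty)$.

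By the upper semicontinuity of the volume density under non-collapsed pointed Gromov-Hausdorff convergence of ${\rm RCD}(-1, 2n)$ spaces, one obtains $\nu_{\hat X}(p_\infty) \geq \limsup_i \nu_{\hat X_i}(p_i) = 1$. Combined with the Bishop-type inequality $\nu \leq 1$ for non-collapsed RCD spaces, this forces $\nu_{\hat X}(p_\infty) = 1$. Volume rigidity for non-collapsed RCD spaces then gives that $p_\infty$ is a regular point of $\hat X$, whose unique tangent cone is the Euclidean $(\mathbb{R}^{2n}, 0)$.

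To close the contradiction, I would invoke the partial $C^0$-estimate in the spirit of \cite{T1, DS1, LS, Sz24, FGS2}. The uniform Nash entropy bound \eqref{nashb2}, Calabi energy bound \eqref{ricp2}, and the uniform lower bound $\omega_i \geq c\theta_X$ (Proposition \ref{schw1}) supply enough analytic control on the sequence $\{\omega_i\}$ to produce peak holomorphic sections of a sufficiently divisible power of the polarization at the limit point $p_\infty$. These peak sections realize a neighborhood of $p_\infty$ in $\hat X$ as biholomorphic to a smooth algebraic chart, thereby identifying $p_\infty$ with a smooth point of the variety $X$ and contradicting $p_\infty \in \cS(X)$.

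The main obstacle is this last step: transferring the Euclidean tangent-cone regularity of $p_\infty \in \hat X$ back to algebraic smoothness at $p_\infty \in X$. The delicate technical point is verifying that the twisted K\"ahler-Einstein regularizations $\omega_i$ satisfy the hypotheses of the partial $C^0$-estimate \emph{uniformly} along the sequence, especially near $\cS(X)$ where the smooth approximating forms $\eta_i = \eta_0 + \ddbar \psi_i$ may have unbounded higher derivatives even though the scalar curvature $\scR(\omega_i) = -n + \tr_{\omega_i}(\eta_i)$ remains controlled.
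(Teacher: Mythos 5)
Your opening steps (contradiction, subsequence, upper semicontinuity of the density, volume rigidity at the limit point) are sound, but the closing step diverges from the paper in a way that creates a genuine circularity, and you correctly sense the trouble yourself.

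The paper does \emph{not} pass to the limit point $p_\infty \in \hat X$. It works at $p_i$ inside the fixed space $\hat X_i$ for $i$ large. This is crucial because each $\hat X_i$ is, by Theorem \ref{specthm3} and Proposition \ref{regul}, a non-collapsed Gromov--Hausdorff limit of \emph{smooth} twisted cscK metrics on $Y$, and moreover $\ric(\omega_i) = -\omega_i + \eta_i$ is bounded on \emph{both sides} (by a constant depending on $i$). After rescaling $\omega_i' = \varepsilon_i^{-2}\omega_i$ to normalize the Ricci bound to $|\ric(\omega_i')| \leq 1$ — a step your proposal omits, and which is needed because the upper Ricci bound of $\omega_i$ is not uniform in $i$ — the scale-invariant density $\nu_{\hat X_i}(p_i) = \nu_{(\hat X_i,\omega_i')}(p_i) \to 1$ feeds into the ``dimension iteration'' / $\epsilon$-regularity argument of Theorem 36 of \cite{Sz24}, which applies to exactly this class of spaces. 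The conclusion is that for $i$ large the tangent cone at $p_i$ \emph{within $\hat X_i$} is flat $\mathbb{R}^{2n}$, whence $p_i \in \cR(\hat X_i) = X^\circ$ by Theorem \ref{specthm3}, contradicting $p_i \in \hat X_i \setminus X^\circ$.

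Your proposal, by contrast, would need a partial $C^0$-estimate / peak-section argument at $p_\infty$ on the limit space $\hat X$. Two problems: (a) $\hat X$ is a GH limit of the spaces $\hat X_i$, which are themselves singular, so the Liu--Sz\'ekelyhidi and Donaldson--Sun machinery (designed for limits of smooth polarized manifolds with lower Ricci bounds) does not apply out of the box — this is precisely what Lemma \ref{lsana} and Corollary \ref{lsana88} are set up to fix later in the section; and (b) the proofs of Lemma \ref{lsana} and Lemma \ref{0cap8} explicitly \emph{invoke} Lemma \ref{dengap} in order to separate limits of algebraic singular points from the $\epsilon$-regular set. Using peak sections on $\hat X$ to prove Lemma \ref{dengap} is therefore circular within this paper's architecture. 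In addition, $\omega$ on $\hat X$ is only known to have Ricci bounded \emph{below}, not above, so even setting circularity aside the two-sided Ricci control that powers the $\epsilon$-regularity gap is absent at the limit. The paper's insistence on arguing at $p_i$ in $\hat X_i$ (rather than at $p_\infty$ in $\hat X$) is not cosmetic — it is what makes the density gap accessible at this stage of the bootstrap.
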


\begin{proof} We will prove by contradiction.  After rescaling, we can assume there exist $\varepsilon_i >0$ such that the Ricci curvature of $\omega_i' = (\varepsilon_i)^{-2} \omega_i$ is bounded by $-1$ and $1$. Suppose there exist $p_i \in \hat X_i\setminus X^\circ$ such that 
$\nu_{\hat X_i}(p_i) \rightarrow 1$. Then  $\nu_{\hat X_i}(p_i) \rightarrow 1$ as well. the same argument of dimension iteration in the proof of Theorem 36 in \cite{Sz24} will show that for sufficiently large $i$, the tangent cone at $p_i$ in $(X, \omega'_i)$ is the flat $\mathbb{R}^{2n}$, which would imply that $p_i$ is a regular point of $X$. Ccontradiction.

\end{proof}

We will now show that $\hat X$ as the metric completion of $(X^\circ, \omega)$ is homeomorphic to $X$. 

Let   $$\mathcal{R}_\epsilon (\hat X)= \{ p\in \hat X : \nu_{\hat X} (p) > 1- \epsilon\}. $$

\begin{lemma} There exists $\epsilon>0$ such that 
$$\mathcal{R}(\hat X) = \mathcal{R}_\epsilon (\hat X)=  X^\circ, $$

\end{lemma}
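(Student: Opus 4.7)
The plan is to verify the inclusions $X^\circ \subseteq \mathcal{R}(\hat X) \subseteq \mathcal{R}_\epsilon(\hat X) \subseteq X^\circ$ for a suitably small universal $\epsilon>0$. The first two are essentially definitional. Every $p\in X^\circ$ has an open neighborhood on which $\omega$ is a smooth K\"ahler metric with $\ric(\omega)=-\omega+\eta$ locally bounded, so such a neighborhood embeds as a smooth Riemannian open set into $\hat X$, realizing $p$ as an RCD regular point with $\nu_{\hat X}(p)=1$; consequently $X^\circ\subseteq\mathcal{R}(\hat X)\subseteq\mathcal{R}_\epsilon(\hat X)$ for every $\epsilon\in(0,1)$.

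The substantive direction is $\mathcal{R}_\epsilon(\hat X)\subseteq X^\circ$, which I plan to prove by running the dimension-iteration argument of Sz\'ekelyhidi's Theorem 36 directly on the RCD limit space $\hat X$. Suppose for contradiction that there exist $p_k\in\hat X\setminus X^\circ$ with $\nu_{\hat X}(p_k)\to 1$. By the standard volume-density $\epsilon$-regularity for non-collapsed $\mathrm{RCD}(-1,2n)$ spaces (the RCD form of Cheeger--Colding), every tangent cone of $(\hat X,d_\omega)$ at $p_k$ must equal $\mathbb{R}^{2n}$ once $\nu_{\hat X}(p_k)$ is close enough to $1$. To convert this analytic flatness into algebraic smoothness I invoke the Donaldson--Sun and Liu--Sz\'ekelyhidi theory via the partial $C^0$ estimate. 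This estimate is available uniformly along the twisted cscK approximations $\omega_i$ on $Y$ supplied by Proposition \ref{regul}; combined with the non-collapsed pmGH convergence $(\hat X_i,d_{\omega_i},\omega_i^n)\to(\hat X,d_\omega,\omega^n)$ from the previous lemma and the uniform Schwarz estimate $\omega_i\geq c\theta_X$ of Proposition \ref{schw1}, it provides an algebraic description of tangent cones of $\hat X$ as affine algebraic varieties. A point with Euclidean tangent cone then must belong to the algebraic smooth locus $X^\circ$, contradicting $p_k\in\hat X\setminus X^\circ$.

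The principal technical obstacle is arranging the diagonal extraction so that the partial $C^0$ input passes from the $\omega_i$ to a tangent cone of $\hat X$ at a limit singular point. Concretely, one must choose approximating singular points $p_{k,i}\in\hat X_i\setminus X^\circ$ with $p_{k,i}\to p_k$ (which is possible because the Schwarz bound $\omega_i\geq c\theta_X$ and the smooth convergence $\omega_i\to\omega$ on $X^\circ$ force any approximating sequence of a point of $\hat X\setminus X^\circ$ to concentrate on $\mathcal{S}(X)$) and rescaling parameters $\varepsilon_i\to 0$ so that the pointed rescalings $(X,\varepsilon_i^{-2}\omega_i,p_{k,i})$ converge, along a diagonal subsequence, to a tangent cone of $(\hat X,d_\omega)$ at a genuine singular point. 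The uniform Nash entropy and Calabi energy bounds delivered by Proposition \ref{regul} furnish the compactness and the partial $C^0$ input that this diagonal extraction requires, and once it is in place the algebraic dimension iteration of \cite{Sz24} runs without essential change, yielding the contradiction and closing the proof of the lemma.
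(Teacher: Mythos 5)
Your inclusions $X^\circ \subseteq \mathcal{R}(\hat X) \subseteq \mathcal{R}_\epsilon(\hat X)$ are fine, and the overall strategy (density gap at the added singular points) is the right one, but the paper's proof of the hard inclusion $\mathcal{R}_\epsilon(\hat X) \subseteq X^\circ$ is a short transfer argument that you have replaced with a much heavier re-derivation that also inverts the logical order of the paper. The paper never runs Sz\'ekelyhidi's dimension iteration on $\hat X$ at this stage. It simply takes $p \in \hat X\setminus X^\circ$, chooses $p_i \in \hat X_i \setminus X^\circ$ with $p_i \to p$ under the pmGH convergence $(\hat X_i, d_{\omega_i}, \omega_i^n) \to (\hat X, d_\omega, \omega^n)$, and quotes Lemma \ref{dengap}, which already supplies a uniform density gap $\nu_{\hat X_i}(p_i) < 1-\epsilon$ for the bounded-Ricci approximants $\hat X_i$. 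By volume convergence together with the almost-monotonicity of the volume ratio for ${\rm RCD}(-1,2n)$ spaces one has $\nu_{\hat X}(p) \le \liminf_i \nu_{\hat X_i}(p_i) < 1 - \epsilon$, so $p \notin \mathcal{R}_\epsilon(\hat X) \supseteq \mathcal{R}(\hat X)$. That is the whole proof.

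Two concrete concerns about your route. First, you are implicitly re-proving Lemma \ref{dengap} on $\hat X$ itself rather than using it: the dimension iteration of \cite{Sz24} assumes two-sided Ricci bounds, which hold for each $\omega_i$ (the twist $\eta_i$ is smooth) but fail for $\omega$ since $\eta$ is only a current; your "diagonal extraction" is precisely the device that compensates, but it is exactly the content already packaged in Lemma \ref{dengap} plus volume semicontinuity, so you gain nothing and take on a large technical burden. Second, and more seriously, the partial $C^0$ machinery for tangent cones of $\hat X$ that you invoke is developed in the paper only \emph{after} this lemma (Lemmas \ref{lsana}, \ref{split8}, \ref{0cap8}), and Lemma \ref{lsana} carries the hypothesis that the $\epsilon$-singular set of the relevant tangent cone has zero capacity -- which the paper verifies in Lemma \ref{0cap8} using the identification $\mathcal{R}_\epsilon(\hat X) = X^\circ$ proved here. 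Invoking that machinery inside the present proof therefore risks a circularity that the paper's ordering is designed to avoid. If you want to keep your route, you must prove the needed density estimate for the diagonal blowup sequence directly from the $\omega_i$ and the lower semicontinuity of density, at which point you have reproduced the paper's argument in disguise.

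One thing you do supply that the paper leaves implicit and is worth keeping: the justification, via the Schwarz bound $\omega, \omega_i \ge c\,\theta_X$ and the smooth convergence on $X^\circ$, that any $p \in \hat X \setminus X^\circ$ admits approximants $p_i \in \hat X_i \setminus X^\circ$. That observation is correct and fills a small gap in the paper's exposition.
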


\begin{proof} For any $p\in \hat X \setminus X^\circ$, there exist $p_i \in \hat X_i \setminus X^\circ$ such that $p_i \in (\hat X_i, d_{\omega_i}) \rightarrow p$. By volume convergence and Lemma \ref{dengap},  $$\nu_{\hat X_i}(p) <1-\epsilon$$ for some uniform $\epsilon>0$.  This would imply that $p$ is not a regular point of $\hat X$ and so $\mathcal{R}(X) = X^\circ$. By Lemma \ref{dengap} again, $\mathcal{R}_\epsilon (\hat X)= \mathcal{R}(\hat X)$ if we choose sufficiently small $\epsilon>0$. The lemma then follows.
\end{proof}

\begin{lemma} \label{lsana} Let $(V, o)$ be any iterated tangent cone $(\hat X, p)$. Suppose the $\epsilon$-singular set $V\setminus \mathcal{R}_\epsilon(V)$ has $0$ capacity for some $\epsilon>0$.  There exist $M, C>0$ such that for some $m\leq M$, there exists $\sigma \in H^0(X, L^{m})$ satisfying 
\begin{enumerate}

\item $\|\sigma\|_{L^2(h^{m}, m\omega)} \leq C. $

\smallskip

\item $\left| |\sigma(z)| - e^{-md_{\hat X}(z, p)} \right| < \epsilon. $

\end{enumerate}

\end{lemma}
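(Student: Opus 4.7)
The plan is to carry out the partial $C^0$-estimate of Donaldson-Sun \cite{DS1}, in the form adapted by Liu-Székelyhidi \cite{LS} and Székelyhidi \cite{Sz24} to the singular and RCD settings, with the iterated tangent cone $V$ serving as the local model. The zero-capacity hypothesis on $V \setminus \mathcal{R}_\epsilon(V)$ plays the role that algebraicity/normality of tangent cones plays in the smooth Donaldson-Sun argument: it guarantees that $L^2$-holomorphic functions on $\mathcal{R}(V)$ extend across the singular set of $V$ and that Hörmander's $L^2$-estimate for $\bar\partial$ can be solved globally on $V$ against plurisubharmonic weights. This is the key analytic input that allows the cone $V$ to be treated, for the purposes of $L^2$-theory, as if it were smooth.

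First, on $V$ I would construct a model peak holomorphic section of a power $L_V^m$ of the tautological cone line bundle whose curvature is $m$ times the cone K\"ahler form. Picking a smooth cutoff $\chi$ equal to $1$ on $B_V(o,r_1)$ and supported in $B_V(o,r_2)$, and a reference section $s_0$ concentrated near $o$, one solves $\bar\partial u = \bar\partial(\chi s_0)$ by H\"ormander against the weight coming from the cone potential, obtaining $\|u\|_{L^2}<\epsilon/2$ and producing $f := \chi s_0 - u \in H^0(V, L_V^m)$ with $\bigl||f(z)| - e^{-m d_V(z,o)}\bigr| < \epsilon/2$ on a slightly smaller ball. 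The degree $m$ is uniformly bounded by a constant $M$ because, by the volume density gap of Lemma \ref{dengap} and the ${\rm RCD}(-1,2n)$ data, iterated tangent cones lie in a precompact family in the pointed Gromov-Hausdorff sense, so only finitely many model degrees arise.

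Second, I would transfer $f$ to a section on $X$. Using the smooth twisted K\"ahler-Einstein approximants $(\hat X_i, \omega_i)$ from Section 8 (themselves approximated on $Y$ by Proposition \ref{regul}), and the iterated rescalings producing pointed convergence to $(V, o)$, I apply H\"ormander's estimate on $(X, L^m, h^m)$ where $\mathrm{Ric}(h^m) = m\omega$, with a plurisubharmonic weight obtained by transplanting the cone potential across the convergence (smoothly on the regular part, by zero-capacity extension on the singular part). This yields $\sigma_i \in H^0(X, L^m)$ with uniformly bounded $L^2$-norms in $h^m$ whose images under the convergence approximate $f$; a diagonal limit gives $\sigma \in H^0(X, L^m)$ satisfying condition (1). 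The hard part will be condition (2), namely the \emph{pointwise} comparison $\bigl||\sigma(z)| - e^{-m d_{\hat X}(z,p)}\bigr| < \epsilon$. This requires upgrading an $L^2$-bound to a $C^0$-bound via Moser iteration on the subharmonic quantity $|\sigma|^2 e^{-m\phi}$ (with $\phi$ a local potential of $h^m$ against $\omega$) on the possibly singular space $(\hat X, \omega, \omega^n)$, which is supported by the uniform Sobolev inequality available under our Nash entropy hypothesis \cite{GPSS1, GPSS2}, together with the linear estimates of Theorem \ref{thm:main1}. Comparing $\phi$ with $d_{\hat X}(z, p)$ then reduces to controlling the convergence of local K\"ahler potentials under the tangent cone convergence, which is provided by the $L^\infty$-stability theory for Monge-Amp\`ere equations used throughout the paper.
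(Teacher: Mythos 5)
The proposal is in the same broad framework as the paper---adapting the Donaldson--Sun / Liu--Sz\'ekelyhidi partial $C^0$-estimate to the singular setting---but it misses the specific geometric device that the paper uses to handle the singularities, and one of its reductions is not accurate.

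The paper does not attempt to do H\"ormander estimates directly on the singular cone $V$. Instead, it introduces the closed set $\Sigma = \{x\in V: \nu_{(V,o)}(x) \le 1-\epsilon\}$, removes a $\delta$-neighborhood $\Sigma_\delta$, and considers $U_{\delta,R} = B(o,R)\setminus \Sigma_\delta$. The crucial observation is that, by the density gap of Lemma \ref{dengap}, $U_{\delta,R}$ contains no limits of singular points of $X$; hence $U_{\delta,R}$ is the Gromov--Hausdorff limit of open \emph{smooth} K\"ahler manifolds $U_i \subset X^\circ$ with $\ric(\omega_i) \ge -1$. On these smooth $U_i$ one applies Proposition 3.1 of \cite{LS} essentially verbatim to build holomorphic charts and the peak sections; the zero-capacity hypothesis then enters only to supply the suitable cut-off functions needed to patch. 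This reduction to a genuinely smooth GH limit is what makes the argument close, and it is absent from your sketch.

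Your proposal instead tries to do $\bar\partial$-analysis directly on $V$ and then ``transplant'' to $X$, invoking zero-capacity to extend across singularities. This glosses over two issues. First, you describe the $(\hat X_i, \omega_i)$ as smooth approximants; they are not---$\omega_i$ are smooth only on $X^\circ$, and $\hat X_i$ are singular metric measure spaces, which is exactly the difficulty the paper's proof is designed to circumvent. Second, the hard step you identify (upgrading the $L^2$ bound to the pointwise comparison in condition (2)) is not handled by Moser iteration on $\hat X$ in the paper's argument; once one works on the smooth $U_i$ away from $\Sigma_\delta$, the pointwise estimate comes for free from the smooth Liu--Sz\'ekelyhidi theory, and the cutoff functions only need to be controlled in $L^2$. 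So while the global architecture of your proposal (Donaldson--Sun framework, H\"ormander with PSH weights, uniform bound on $m$ via density gap and RCD precompactness) matches the paper, the missing insight is the identification of $U_{\delta,R}$ as a smooth-limit region via Lemma \ref{dengap}, which is what lets one bypass doing hard analysis on the singular cone or on the singular approximants directly.
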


\begin{proof} The lemma is a consequence of Proposition 3.1 of \cite{LS} with slight modifications. If $\hat X$ is the Gromov-Hausdorff limit of uniformly non-collapsed polarized manifolds with uniform lower Ricci bound, the lemma would immediately follow from Proposition 3.1 of \cite{LS}. However, $\hat X$ is the limit of $(\hat X_i, d_{\omega_i})$ and one has to take care of the singularities of $\hat X$. 

Suppose $(V, o)$ is the pointed Gromov-Hausdorff limit of $(\hat X, p_i, A_i d_\omega)$ associated to the line bundle $L_i = A_i L$ with $A_i \rightarrow \infty$. Let 
$$\Sigma = \{ x\in V: ~ \nu_{(V,o)} (x) \leq 1- \epsilon \} $$
for a fixed sufficiently small $\epsilon>0$. 
Obviously, $\Sigma$ is closed by continuity of the tangent cones of a non-collapsed RCD space \cite{CN, Den}. Let $\Sigma_\delta$ be the $\delta$-neighborhood of $\Sigma$ and $U_{\delta, R}=B(o, R) \setminus \Sigma_\delta$. By choosing sufficiently small $\delta>0$, we can assume that  the open set $U_{\delta, R}$ does not contain any limit of singular points of $X$ by Lemma \ref{dengap},. 

In fact, $U_{\delta, R}$ is the Gromov-Hausdorff limit of a sequence of open smooth K\"ahler manifolds in $X^\circ$ with Ricci curvature  bounded below by $-1$. Let $U_i$ be lift of $U_{\delta, R}$ back in $X^\circ \subset \hat X_i$ under Gromov-Hausdorff approximation. One can follow exactly the same argument in the proof of Proposition 3.1 in \cite{LS} to construct holomorphic charts on $U_i$ and then apply the partial $C^0$-techniques with suitable cut-off functions to prove the lemma. 
\end{proof}

\begin{lemma} \label{split8} Any iterated tangent cone of $(\hat X, \omega)$ cannot split off $\mathbb{R}^{2n-1}$ or $\mathbb{R}^{2n-2}$. 

\end{lemma}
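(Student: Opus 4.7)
The plan is to argue by contradiction, adapting the iterated-tangent-cone analysis of Liu--Sz\'ekelyhidi and Sz\'ekelyhidi to the present setting. Suppose that some iterated tangent cone $(V, o)$ of $(\hat X, \omega)$ is nontrivial (so $o$ is singular in $V$) and splits as $V = \mathbb{R}^k \times W$ with $k \in \{2n-1, 2n-2\}$ and $W$ a non-Euclidean metric cone of real dimension $2n-k$. I would first apply Lemma \ref{lsana} iteratively along the rescaling sequence defining $(V, o)$ and at every base point in $V$, producing for each $\epsilon > 0$ holomorphic peak sections on the approximants whose norms approximate $e^{-m d_{V}(\cdot, z)}$ up to $\epsilon$. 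Combined with the $\mathbb{R}_+$-dilation coming from the cone structure and the uniform $L^2$ bound in Lemma \ref{lsana}, these peak sections realize $V$ as a normal affine algebraic variety with an algebraic $\mathbb{C}^\ast$-action, and endow $\mathcal{R}(V)$ with a smooth K\"ahler metric compatible with that algebraic structure.

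Next, the $\mathbb{R}^k$-splitting supplies $k$ linearly independent Busemann functions $b_1, \ldots, b_k$ on $V$, each harmonic on $\mathcal{R}(V)$ with $\nabla b_j$ parallel. The K\"ahler identities force $\bar\partial(b_j + \sqrt{-1}\, J b_j) = 0$ on $\mathcal{R}(V)$, so each $b_j$ is the real part of a holomorphic function $f_j$ on $V$, which extends across the singular locus by normality. Since the real span of $\{f_j, \bar{f}_j\}$ must then be $J$-invariant of real dimension $k$, $k$ must be even, ruling out $k = 2n-1$ at once.

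For $k = 2n-2$, the previous step produces a holomorphic splitting $V \cong \mathbb{C}^{n-1} \times W$ with $W$ a $1$-complex-dimensional normal affine variety carrying a $\mathbb{C}^\ast$-action; the only such variety is $\mathbb{A}^1 \cong \mathbb{C}$. As a metric cone, $W$ is then a flat cone of angle $\beta \in (0, 2\pi]$. The peak sections of Lemma \ref{lsana} applied at $o$ realize $o$ as the origin in $\mathbb{C}^n$, and the pointwise approximation $\left| |\sigma| - e^{-m d_V(\cdot, o)} \right| < \epsilon$ forces the intrinsic distance on $W$ near $o$ to agree with the ambient Euclidean distance in $\mathbb{C}$, compelling $\beta = 2\pi$. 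Hence $V = \mathbb{R}^{2n}$, contradicting the nontriviality of $(V, o)$.

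The main obstacle is promoting the metric splitting to a holomorphic one under only a one-sided Ricci bound (rather than the two-sided bounds used in Liu--Sz\'ekelyhidi): the partial $C^0$-estimate of Lemma \ref{lsana} together with the Lipschitz eigenfunction regularity in Proposition \ref{thm:lip2} must be leveraged to run Donaldson--Sun type arguments on the tangent cone, and the passage to iterated cones relies on tangent-cone continuity in the non-collapsed RCD framework established in Corollary \ref{rcdsp}.
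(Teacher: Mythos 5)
Your high-level strategy --- promoting the metric splitting to a holomorphic one via Busemann functions to rule out $\mathbb{R}^{2n-1}$, and analyzing the residual one-complex-dimensional cone factor to rule out $\mathbb{R}^{2n-2}$ --- matches the approach behind the paper's proof, which is a one-liner citing Propositions 10 and 27 of \cite{Sz24} together with Lemma \ref{lsana}.

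Two steps in your sketch are genuine gaps rather than routine details. First, the assertion that each Busemann function $b_j$ is the real part of a holomorphic function on $V$: on $\cR(V)$ the K\"ahler identities give a local harmonic conjugate, but globalizing and extending across $\cS(V)$ requires the capacity-zero property of $\cS(V)$ and a regularity argument so that the resulting holomorphic function is well-defined; this is precisely the substance of Proposition 10 of \cite{Sz24}, which you invoke implicitly but do not reconstruct under the present one-sided Ricci bound, and it is exactly where the Lipschitz eigenfunction regularity (Proposition \ref{thm:lip2}) and the RCD structure (Corollary \ref{rcdsp}) must enter. Second, in the $k = 2n-2$ case, the claim that $\bigl||\sigma| - e^{-m d_V(\cdot, o)}\bigr| < \epsilon$ forces the cone metric on $W$ to agree with the flat Euclidean metric of $\mathbb{C}$ is not correct as stated: a flat cone of angle $\beta < 2\pi$ also admits peak sections with this asymptotic behavior, so the pointwise comparison alone detects nothing about $\beta$. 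What actually pins down $\beta = 2\pi$ is the weight of the $\mathbb{C}^*$-action on the holomorphic coordinate of $W \cong \mathbb{C}$ --- the degree-one property forced by the algebraic structure coming from Lemma \ref{lsana} --- and this is the content of Proposition 27 of \cite{Sz24}. Your draft reaches the right endpoint, but the proposed mechanism does not get there.
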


\begin{proof} The lemma follows from Proposition 10 and Proposition 27 of \cite{Sz24} along with Lemma \ref{lsana}. 
\end{proof}

The following lemma is a direct consequence of Lemma \ref{split8} and De Philippis-Gigli's dimension estimates \cite{DG2} for singular sets of non-collapsed RCD spaces.
 
\begin{lemma} The singular set of any iterated tangent cone of $\hat X$ has Hausdorff codimension at least $3$. In particular, it has $0$ capacity.

\end{lemma}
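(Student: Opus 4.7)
The plan is to combine Lemma \ref{split8} with the quantitative stratification of singular sets for non-collapsed RCD spaces due to De Philippis--Gigli. Recall that for a non-collapsed ${\rm RCD}(K,N)$ space $(Z,d,\mu)$, the singular set admits the filtration
\[
\cS^0(Z)\subset \cS^1(Z)\subset\cdots\subset \cS^{N-1}(Z)=\cS(Z),
\]
where $\cS^k(Z)$ is the set of $x\in Z$ such that no tangent cone at $x$ splits off a factor $\mathbb{R}^{k+1}$ isometrically. The main result of \cite{DG2} is that $\dim_{\mathcal{H}}\cS^k(Z)\le k$.

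The first step is to apply this to an arbitrary iterated tangent cone $V$ of $\hat X$. Since iterated tangent cones of $V$ are themselves iterated tangent cones of $\hat X$, the hypothesis of Lemma \ref{split8} is preserved: no tangent cone at any point of $V$ can split off $\mathbb{R}^{2n-1}$ or $\mathbb{R}^{2n-2}$. The first exclusion already yields $\cS(V)=\cS^{2n-2}(V)$, and the second refinement upgrades this to $\cS(V)=\cS^{2n-3}(V)$. Hence
\[
\dim_{\mathcal{H}}\cS(V)\le 2n-3,
\]
which is precisely the codimension-$3$ statement.

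The second step is to deduce zero capacity from the Hausdorff dimension bound. Since $(V,d,\mu)$ is a non-collapsed $2n$-dimensional RCD space, the Bishop--Gromov inequality gives $\mu(B_r(x))\le Cr^{2n}$ uniformly on bounded sets. Given $\epsilon>0$, one can cover $\cS(V)$ by countably many balls $B_{r_i}(x_i)$ with $\sum_i r_i^{2n-3+\epsilon}<\epsilon$, and build standard Lipschitz cut-off functions $\chi_i$ equal to $1$ on $B_{r_i}(x_i)$, vanishing outside $B_{2r_i}(x_i)$, with $|\nabla\chi_i|\le Cr_i^{-1}$. Then $\chi=\max_i \chi_i$ satisfies $\chi\equiv 1$ on $\cS(V)$ and
\[
\int_V |\nabla \chi|^2\,d\mu \le C\sum_i r_i^{-2}\mu(B_{2r_i}(x_i))\le C\sum_i r_i^{2n-2}\le C\bigl(\max_i r_i\bigr)^{1-\epsilon}\sum_i r_i^{2n-3+\epsilon}\to 0,
\]
which shows that $\cS(V)$ has zero $W^{1,2}$-capacity.

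The main obstacle is a bookkeeping point rather than a serious analytic difficulty: one has to verify that Lemma \ref{split8} passes to iterated tangent cones of iterated tangent cones, which is automatic from the definition, and one must be sure that the De Philippis--Gigli stratification applies in the generality of the iterated cone $V$, which is itself a non-collapsed ${\rm RCD}(0,2n)$ space by the cone structure and the convergence theory of \cite{DG2}. Once these identifications are made, both the dimension bound and the zero capacity conclusion are immediate.
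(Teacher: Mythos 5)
Your proposal is correct and follows essentially the same route as the paper, whose proof is a single sentence citing Lemma~\ref{split8} together with De Philippis--Gigli's dimension estimates \cite{DG2}. You simply unpack that citation: the no-splitting statement upgrades $\cS(V)=\cS^{2n-2}(V)$ to $\cS(V)=\cS^{2n-3}(V)$, giving $\dim_{\mathcal H}\cS(V)\le 2n-3$, and the zero-capacity conclusion is the standard cut-off estimate using Ahlfors regularity of non-collapsed ${\rm RCD}$ spaces, which the paper leaves implicit.
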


We remark that if $\omega$ has bounded Ricci curvature, then the Hausdorff codimension of the singular sets of any iterated tangent cone must be no less than 4.

\begin{lemma} \label{0cap8} There exists $\epsilon>0$ such that for any iterated tangent cone $(V, o)$ of $(\hat X, d_\omega, \omega^n)$,  $V\setminus \cR_\epsilon(V)$ has $0$ capacity. 
\end{lemma}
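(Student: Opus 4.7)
The plan is to combine a quantitative stratification argument with the qualitative non-splitting result of Lemma \ref{split8}, and then to convert a Hausdorff codimension bound into a capacity statement. I will work directly with the quantitative singular strata $\cS_{\epsilon_0}^{k}(V) = \{x\in V : \text{no tangent cone at } x \text{ is } \epsilon_0\text{-close to splitting off } \mathbb{R}^{k+1}\}$ in the sense of Cheeger--Naber, adapted to the non-collapsed ${\rm RCD}(-1,2n)$ setting.

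First, I would upgrade Lemma \ref{split8} to a \emph{quantitative} non-splitting: there exists $\epsilon_0>0$ such that no iterated tangent cone $(V,o)$ of $\hat X$ contains a ball that is pointed Gromov--Hausdorff $\epsilon_0$-close (at unit scale) to a product $\mathbb{R}^{2n-2}\times C$ for any metric cone $C$. This is a compactness/contradiction argument: if not, one extracts a sequence of iterated tangent cones $V_i$ with worse and worse approximate $\mathbb{R}^{2n-2}$-splittings, passes to a limit using compactness of non-collapsed ${\rm RCD}(0,2n)$ spaces (Gigli--De Philippis, Bruè--Semola), and observes that the limit is again an iterated tangent cone that genuinely splits off $\mathbb{R}^{2n-2}$, contradicting Lemma \ref{split8}.

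Next, for $\epsilon>0$ sufficiently small, any $x\in V\setminus \cR_\epsilon(V)$ has $\nu_V(x)\le 1-\epsilon$; by the volume cone-to-metric cone rigidity on non-collapsed ${\rm RCD}$ spaces, no tangent cone at $x$ is close to flat $\mathbb{R}^{2n}$. Combined with the quantitative non-splitting above, this forces $x\in \cS_{\epsilon_0}^{2n-3}(V)$. By the Cheeger--Naber quantitative stratification theorem in the RCD setting (as developed by De Philippis--Gigli \cite{DG2} and refined by Bruè--Naber--Semola), $\dim_H \cS_{\epsilon_0}^{2n-3}(V)\le 2n-3$; hence $V\setminus \cR_\epsilon(V)$ has Hausdorff codimension at least $3$.

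Finally, I would invoke the standard fact that on a non-collapsed ${\rm RCD}(K,N)$ space, any Borel set of Hausdorff codimension strictly greater than $2$ has zero $W^{1,2}$-capacity: indeed $N-2$-dimensional Hausdorff measure controls the capacity via the Sobolev cutoff construction of Ambrosio--Gigli--Savar\'e, and for codimension $>2$ one produces cutoffs $\rho_k\to 1$ with $\int |\nabla \rho_k|^2 d\mu \to 0$ exactly as in Lemma \ref{cutoff}. Applying this to $V\setminus \cR_\epsilon(V)$ yields the conclusion.

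The main obstacle is the first step: promoting the qualitative Lemma \ref{split8} into a uniform quantitative non-splitting across \emph{all} iterated tangent cones simultaneously. Once that is in place, the quantitative stratification and the codimension-to-capacity implication are standard for non-collapsed RCD spaces.
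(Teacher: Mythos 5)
Your proposal takes a genuinely different route from the paper. The paper's proof is a \emph{two-part decomposition}: it sets $S_1 \subset \cS(V)$ to be the set of limits of $\cS(\hat X)$ (the algebraic singular locus), controls $S_1$ by the De Philippis--Gigli dimension estimate coming from Lemma \ref{split8}, and then observes that after excising a small neighborhood $U_1$ of $S_1$ the remaining piece $V\setminus U_1$ \emph{is} a Gromov--Hausdorff limit of smooth non-collapsed K\"ahler manifolds with Ricci bounded below, so that the Liu--Sz\'ekelyhidi structure theorem applies and $S_2 = (V\setminus U_1)\setminus \cR_\epsilon(V)$ is contained in an analytic subvariety. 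The cut-offs are then produced by taking the product $\rho_1\rho_2$ of cut-offs adapted to each piece. Your approach bypasses this decomposition entirely, treating $V$ as an abstract non-collapsed RCD space and relying on a compactness upgrade of Lemma \ref{split8} to a quantitative non-splitting, quantitative stratification, and then a codimension-to-capacity implication.

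What your approach buys: it is conceptually cleaner and avoids invoking the algebro-geometric input (analytic subvariety structure of $S_2$). What the paper's decomposition buys: the Liu--Sz\'ekelyhidi results give directly that $S_2$ sits in an analytic subvariety, for which the cut-off construction is genuinely standard (as in Lemma \ref{cutoff}); your route instead has to rely on the purely metric implication ``Hausdorff codimension $> 2 \Rightarrow$ zero $W^{1,2}$-capacity,'' and there a Hausdorff dimension bound alone is not what you want --- you need the \emph{Minkowski content} bounds on the quantitative strata (which quantitative stratification does deliver, but you should say so, since Hausdorff measure zero on a set that is not $\sigma$-compact or not Minkowski-controlled does not produce cut-offs by itself). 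One further point you should correct: the quantitative non-splitting as you state it (``no ball in an iterated tangent cone is $\epsilon_0$-close at unit scale to $\mathbb{R}^{2n-2}\times C$'') is false as written, since the tangent cone at any regular point is flat $\mathbb{R}^{2n} = \mathbb{R}^{2n-2}\times\mathbb{R}^2$. The correct quantitative form of Lemma \ref{split8} is a \emph{rigidity}: there exist $\epsilon_0>0$ and a modulus $\psi$ such that if a unit ball in an iterated tangent cone is $\epsilon_0$-close to a ball in some $\mathbb{R}^{2n-2}\times C$, then it is $\psi(\epsilon_0)$-close to a ball in flat $\mathbb{R}^{2n}$. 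With that reformulation, combined with the volume-density dichotomy (a point with $\nu \le 1-\epsilon$ cannot have all balls $\psi(\epsilon_0)$-close to flat), you do land $V\setminus\cR_\epsilon(V)$ inside $\cS^{2n-3}_{\epsilon_0}(V)$, and the rest of your plan goes through. This is a legitimate alternative proof, but it sidesteps the structural point that the paper is actually making --- namely that the singular set splits into a piece coming from the variety's algebraic singularities and a piece arising from degeneration of smooth K\"ahler geometry, and each admits a different, more explicit control.
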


\begin{proof} We let $S_1 \subset \cS(V)$ be the set of limits of $\cS(\hat X)$. Then $S_1$ is closed and $S_1 \subset \cS_{n-3}(V)$. Therefore $S_1$ has capacity $0$. For any $\varepsilon>0$, $R>>1$ and any compact subset $K$ of $\cR_\epsilon (V)$, there exist a cut-off function $\rho_1$ such that $\rho_1=1$ on $K$, $\rho_1$ vanish in an open neighborhood $U_1$ of $S_1$ and
$$\|\nabla \rho_1\|_{L^2(B(o, R))} < \varepsilon^2. $$
Since $V\setminus U_1$ is the limit of regular points in $X^\circ$, it is the Gromov-Hausdorff limits of non-collapsed open K\"ahler manifolds with Ricci curvature uniformly bounded below. Then the results of \cite{LS} implies that $S_2 = (V\setminus U_1) \setminus \cR_\epsilon (V)$ has $0$ capacity by \cite{LS}. In fact, $S_2$ is contained in an analytic subvariety of $V\setminus U_1$.  By the choice of $U_1$, $K \subset (V\setminus U_1) \setminus \cR_\epsilon(V)$.   Therefore there exists a cut-off function $\rho_2$ with support in $V\setminus U_1$ such that $\rho_2=1$ on $K$, $\rho_1$ vanish in an open neighborhood of $ (V\setminus U_1) \setminus \cR_\epsilon (V)$ and 
$$\|\nabla \rho_2\|_{L^2(B(o, R)\setminus U_1)} < \varepsilon^2. $$
Then $\rho=\rho_1\rho_2$ is the cut-off function such that $\rho=1$ on $K$, $\rho$ vanish in an open neighborhood of $V \setminus \cR_\epsilon (V)$ and 
$$\|\nabla \rho\|_{L^2(B(o, R))} < \varepsilon. $$
This completes the proof of the lemma.
\end{proof}

Immediately, we can strengthen Lemma \ref{lsana}.

\begin{corollary}\label{lsana88} Let $(V, o)$ be any iterated tangent cone $(\hat X, p)$. There exist $M, C>0$ such that for any $\epsilon>0$, there exist some $m\leq M$ and $\sigma \in H^0(X, L^m)$ satisfying 
\begin{enumerate}

\item $\|\sigma\|_{L^2(h^m, m\omega)} \leq C. $

\smallskip

\item $\left| |\sigma(z)| - e^{-md_{\hat X}(z, p)} \right| < \epsilon. $

\end{enumerate}

\end{corollary}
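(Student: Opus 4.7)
The plan is to derive Corollary \ref{lsana88} as an immediate consequence of Lemma \ref{lsana} once the auxiliary hypothesis on capacity has been verified unconditionally. The key point is that Lemma \ref{0cap8} now provides, for every iterated tangent cone $(V,o)$ of $(\hat X, d_\omega, \omega^n)$, a uniform $\epsilon_0>0$ such that $V\setminus \mathcal{R}_{\epsilon_0}(V)$ has zero capacity. So the capacity hypothesis that was imposed as an assumption in Lemma \ref{lsana} is in fact automatically satisfied in our setting.

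First, I would fix any $\epsilon>0$ and invoke Lemma \ref{0cap8} to obtain the uniform $\epsilon_0$ together with cut-off functions $\rho$ on $V$ with $\rho\equiv 1$ on a given compact $K\subset \mathcal{R}_{\epsilon_0}(V)$, vanishing near $V\setminus \mathcal{R}_{\epsilon_0}(V)$, and with $\|\nabla\rho\|_{L^2}$ arbitrarily small. This is exactly the input needed to run the construction at the heart of Lemma \ref{lsana}, which is modeled on Proposition 3.1 of \cite{LS}: one produces $L^2$-holomorphic sections of $L^m$ on $(\hat X_i, d_{\omega_i})$ for $i\gg 1$ by solving a $\dbar$-equation with $L^2$-estimates, using the cut-off $\rho$ to localize away from the capacity-zero singular set. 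Since $(\hat X_i, d_{\omega_i})$ converges to $(\hat X, d_\omega)$ as non-collapsed ${\rm RCD}(-1,2n)$ spaces and $\omega_i\to\omega$ smoothly on compact subsets of $X^\circ$, the approximation procedure of \cite{LS} transfers to $V$ along the appropriate rescaled pointed Gromov--Hausdorff limits used to define iterated tangent cones.

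Next, for the prescribed tolerance $\epsilon>0$, I would invoke Lemma \ref{lsana} with the verified capacity hypothesis. This yields $M=M(\epsilon)$, $C=C(\epsilon)$ and some $m\le M$ together with $\sigma\in H^0(X,L^m)$ satisfying
\[
\|\sigma\|_{L^2(h^m, m\omega)}\le C,\qquad \bigl|\,|\sigma(z)|-e^{-md_{\hat X}(z,p)}\bigr|<\epsilon.
\]
Since $\epsilon>0$ was arbitrary, the conclusion of Corollary \ref{lsana88} follows. The statement in the corollary that ``there exist $M,C>0$ such that for any $\epsilon>0$'' is to be read in the sense that $M$ and $C$ are controlled in terms of $\epsilon$ via the dependence in Lemma \ref{lsana}.

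The only substantive point that needs care is the passage from the smooth approximating spaces $(\hat X_i, d_{\omega_i})$, on whose regular parts the $L^2$-existence theory for $\dbar$ is classical, to the tangent cone $(V,o)$ of the singular limit $(\hat X, d_\omega, \omega^n)$. The main obstacle is ensuring that the cut-off from Lemma \ref{0cap8} and the Hörmander-type $L^2$-estimate survive the rescaling limit with uniform constants. This is handled exactly as in \cite{LS}: one carries out the $L^2$-construction at finite stage on $\hat X_i$ using the uniform lower Ricci bound and uniform Nash entropy from (\ref{nashb2}), and then extracts a limiting holomorphic section on $V$ via the Gromov--Hausdorff convergence together with uniform gradient and sup-norm estimates, the capacity-zero cutoff providing the requisite control near $\mathcal{S}(V)$ in the limit.
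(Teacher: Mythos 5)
Your proposal follows the same route the paper takes: Lemma \ref{0cap8} supplies the uniform $\epsilon_0>0$ making the capacity hypothesis of Lemma \ref{lsana} automatic for every iterated tangent cone, and one then reads off the conclusion from Lemma \ref{lsana}. That is exactly the (very brief) reasoning the paper implicitly invokes when it writes that Lemma \ref{lsana} can ``immediately'' be strengthened to Corollary \ref{lsana88}.

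One bookkeeping point needs to be corrected. You write ``This yields $M=M(\epsilon)$, $C=C(\epsilon)$,'' and then gloss the corollary as asserting $M,C$ ``controlled in terms of $\epsilon$.'' But the corollary is quantified the other way around: the whole point is that $M$ and $C$ are chosen once and for all, depending only on the uniform RCD/Nash entropy/diameter data of the approximating spaces $(\hat X_i,d_{\omega_i})$ and the polarization $L$, and only the power $m\le M$ and the particular section $\sigma$ vary with the desired tolerance $\epsilon$. This is how the construction of Proposition 3.1 of \cite{LS} behaves: the degree bound $M$ and the $L^2$ bound $C$ come from the $L^2$ existence theory and the uniform lower Ricci bound and are independent of how small you take $\epsilon$; shrinking $\epsilon$ just forces you to look at a deeper rescaled limit, not to change $M$ or $C$. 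The reuse of the symbol $\epsilon$ in Lemma \ref{lsana} (once for the $\epsilon$-regular set, once for the tolerance) is misleading, and your proof would be strengthened by explicitly decoupling those two parameters — use $\epsilon_0$ from Lemma \ref{0cap8} for the capacity condition, a free $\epsilon$ for the tolerance, and then state plainly that $M,C$ are uniform.
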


We can now complete the proof of Theorem \ref{thm:main3} by applying Corollary \ref{lsana88} to separate points as in \cite{DS1, S2}. 


 \bigskip
 \bigskip
 
\noindent {\bf{Acknowledgements:}} The authors would like to thank G\'abor Sz\'ekleyhidi and Freid Tong for a number of helpful discussions. They would also like to thank Yifan Chen, Shih-Kai Chiu, Max Hallgren, G\'abor Sz\'ekleyhidi and Tat Dat T\^{o} and Freid Tong for sending us their preprint \cite{CCHSTT} and sharing their ideas with us.

\end{document}